\numberwithin{equation}{section}
\theoremstyle{mystyle}
\newtheorem{thm}{\textbf{Theorem}}[section]
\newtheorem*{thm*}{\textbf{Theorem}}
\newtheorem{prop}[thm]{\textbf{Proposition}}
\newtheorem*{prop*}{\textbf{Proposition}}
\newtheorem{lem}[thm]{\textbf{Lemma}}
\newtheorem*{lem*}{\textbf{Lemma}}
\theoremstyle{definition}
\newtheorem*{dfn*}{\textbf{Definition}}
\newtheorem{rem}[thm]{\textbf{Remark}}
\newtheorem*{rem*}{\textbf{Remark}}
\newcommand{\dualC}[2]{
\langle #1 , #2 \rangle
}
\newcommand{\norm}[2]{
\| #1 \|_{#2}
}
\newcommand{\set}[2]{
\{ #1 : #2 \}
}
\newcommand{\SET}[2]{
\left\{ #1 : #2 \right\}
}
\newcommand{\bR}{\mathbb{R}}
\newcommand{\bN}{\mathbb{N}}
\newcommand{\FT}{\mathscr{F}}
\title{On solitary wave solutions to dispersive equations with double power nonlinearities}
\author{Kaito Kokubu\footnote{Email: \texttt{1123701@ed.tus.ac.jp}}}
\date{}
   \def\MR#1{}
\begin{document}

\maketitle 
\begin{center}
    Department of Mathematics, Graduate School of Science, Tokyo University of Science, \\
    1-3 Kagurazaka, Shinjuku-ku, Tokyo 162-8601, Japan
\end{center}

\begin{abstract}
    We study semilinear elliptic equations with the fractional Laplacian in $\bR$.
    The equations with single power nonlinearities have been observed by Weinstein(1987), Frank--Lenzmann(2013) and so on.
    We focus on the equations with double power nonlinearities and consider the existence of ground states.
\end{abstract}

\section{Introduction}

We consider the following stationary problem:
\begin{align}
    D_{x}^{\sigma} \phi + c \phi - f(\phi) = 0, \quad x\in\bR, \label{eq:SP0}
\end{align}
where $\phi$ is a real-valued unknown function, $c$ is a positive constant and $f$ is some given nonlinearity.
Furthermore, for $0 < \sigma < 2$, we define the operator $D_{x}^{\sigma}$ as $D_{x}^{\sigma}:=\FT^{-1}|\xi|^{\sigma}\FT$, where $\FT$ is the Fourier transform.
This operator is often written as $(-\Delta)^{\sigma/2}$.

It is well known that the solutions of \eqref{eq:SP0} provide solitary wave solutions of some dispersive equations.
For example, suppose that $\phi$ is a solution to \eqref{eq:SP0}, then $u(t,x)=\phi(x-ct)$ is a travelling wave solution to Benjamin--Ono type equations
\begin{align}
  \partial_{t}u + \partial_{x}f(u) + \partial_{x}D_{x}^{\sigma}u = 0, \quad x\in \bR, \ t\in \bR,
\end{align}
with the wave speed $c>0$, and $u(t,x)=e^{i \omega t}\phi(x)$ is a standing wave solution to fractional Schr\"{o}dinger equations
\begin{align}
  i\partial_{t}u - D_{x}^{\sigma}u + f(u) = 0, \quad x\in \bR, \ t\in \bR,
\end{align}
with the frequency of oscillation $\omega>0$.
(Here we replace $c$ with $\omega$.)

One of the important solutions is \textit{a ground state}.
A ground state is a nontrivial solution to \eqref{eq:SP0} which minimizes the value of  \textit{the action functional} corresponding to \eqref{eq:SP0}:
\begin{align}
  S(u) = \frac{1}{2}\norm{D_{x}^{\sigma/2}u}{L^{2}}^{2} + \frac{c}{2}\norm{u}{L^{2}}^{2} - \int_{\bR} F(u) \, dx,
\end{align}
where $F(s):=\int_{0}^{s} \, f(t) dt$ for $s\in\bR$.

The stationary problems \eqref{eq:SP0} with single power nonlinearities have been studied well so far.
First, in case $\sigma=1$, Benjamin\cite{Benjamin} obtained the explicit solution to \eqref{eq:SP0} with $f(s) = s^{2}$: $\phi(x) = \frac{2c}{1+c^{2}x^{2}}$.
Furthermore, Weinstein\cite{Weinstein} found a positive and even ground state of \eqref{eq:SP0} with $f(s)=s^{p} \ (p\in\mathbb{N}, \ p\geq2.)$
With the same method as \cite{Weinstein}, we can obtain the existence of a positive and even ground state of more general problems
\begin{align}
  D_{x}^{\sigma} \phi + c \phi - |\phi|^{p-1}\phi = 0,
\end{align}
where $0 <\sigma < 2, \ 1 < p < 2^{\ast}_{\sigma}-1$, and $2^{\ast}_{\sigma}:=2/(1-\sigma)_{+}$ (see Remark (1) of Proposition 1.1 in \cite{Frank-Lenzmann}.)
The uniqueness of ground states of these equations is shown by Frank--Lenzmann\cite{Frank-Lenzmann}.

Recently, the stability and instability of solitary wave solutions of dispersive equations with double power nonlinearities have been studied: for example, Ohta\cite{Ohta_1995_S}, Fukaya-Hayashi\cite{Fukaya-Hayashi-2011}, and Lewin--Nodari \cite{Lewin-Nodari} for the Schr\"{o}dinger equation.
Then our interest is the stability and instability of travelling wave solutions to the Benjamin--Ono equation
\begin{align}
  \partial_{t} u + \partial_{x}(-\phi^{p} + \phi^{q}) + \partial_{x}D_{x}u = 0, \quad x\in\bR, \ t\in\bR, \tag{GBO} \label{eq:GBO}
\end{align}
where $p, \ q \in\mathbb{N}, \ 2 \leq p < q$.
To observe this, we need to study the existence of solutions, especially ground states, of the following stationary problem derived from \eqref{eq:GBO}:
\begin{align}
  D_{x}\phi + c\phi + \phi^{p} - \phi^{q} = 0, \quad x\in\bR. \tag{SP} \label{eq:SP00}
\end{align}
The results on the existence depend on the parities of $p$ and $q$.
Therefore, the aim of this paper is to classify the existence and properties of solutions to \eqref{eq:SP00} with respect to the parities.
To consider the classification, we first observe \eqref{eq:SP0} with the following double power nonlinearities:
\begin{align}
  & f_{1}(s) := -|s|^{p-1}s + |s|^{q-1}s, \\
  & f_{2}(s) := |s|^{p-1}s + |s|^{q-1}s, \\
  & f_{3}(s) := |s|^{p-1}s - |s|^{q-1}s,
\end{align}
for $s\in\bR$, where $1 < p < q < 2^{\ast}_{\sigma} -1$, and apply these results to the classification of solutions of \eqref{eq:SP}.

\section{Main Results} \label{section:main_results}
First, for convenience, we name equations \eqref{eq:SPj} as
\begin{align}
  D_{x}^{\sigma} \phi + c \phi - f_{j}(\phi) = 0, \quad x\in\bR. \tag{SP$j$} \label{eq:SPj}
\end{align}
Moreover, we put
\begin{align}
  2^{\ast}_{\sigma} := \frac{2}{(1-\sigma)_{+}} = \left\{
    \begin{aligned}
     & \frac{2}{1-\sigma}, \ &\text{for} \ 0 < \sigma < 1, \\
     & \infty, \ &\text{for} \ 1 \leq \sigma < 2.
    \end{aligned}
  \right.
\end{align}
Before stating the main results, we define a ground state.
First, we write the action funcional $S_{j}\colon H^{\sigma/2}(\bR) \rightarrow \bR$ corresponding to \eqref{eq:SPj} as
\begin{align}
  S_{j}(u) := \frac{1}{2}\norm{D_{x}^{\sigma/2}u}{L^{2}}^{2} + \frac{c}{2}\norm{u}{L^{2}}^{2} - \int_{\bR} F_{j}(u) \, dx,
\end{align}
where
\begin{align}
  F_{j}(s):=\int_{0}^{s} f_{j}(t) \, dt
\end{align}
for $s\in\bR$.
We can see that $S_{j}\in C^{1}(H^{\sigma/2}(\bR),\bR)$ and
\begin{align}
  S_{j}'(u) = D_{x}^{\sigma}u + cu - f_{j}(u),
\end{align}
which implies that $u\in H^{\sigma/2}(\bR)$ is a solution to \eqref{eq:SP0} if and only if $S_{j}'(u)=0$.
Here we put the set of nontrivial solutions of \eqref{eq:SPj} as
\begin{align}
  \mathcal{N}_{j}:= \set{v\in H^{\sigma/2}(\bR) \setminus \{ 0 \}}{S_{j}'(v)=0}.
\end{align}

\begin{dfn*}
  By {\it a ground state}, we mean $\phi\in\mathcal{N}_{j}$ which minimizes the value of $S_{j}$ in $\mathcal{N}_{j}$.
  We put the set of ground states of \eqref{eq:SPj} as
  \begin{align}
    \mathcal{G}_{j} := \set{ \phi\in\mathcal{N}_{j} }{ S_{j}(\phi) \leq S_{j}(v) \ \text{for all} \ v\in\mathcal{N}_{j} }.
  \end{align}
\end{dfn*}

Here we state the results for \eqref{eq:SPj}.

\begin{thm}
  Set $j=1, \ 2$ and let $0 < \sigma < 2$.
  For any $c>0$, there exists a ground state of \eqref{eq:SPj} which belongs to $H^{\sigma + 1}(\bR)$.
  Moreover, it can be taken as positive, even, and decreasing in $|x|$.
  In addition, there do not exist any ground states of \eqref{eq:SPj} with sign changes. \label{Thm:SP1_2}
\end{thm}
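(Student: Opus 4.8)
The plan is to obtain a ground state as a minimizer of $S_j$ over the Nehari set $\mathcal{M}_j:=\set{u\in H^{\sigma/2}(\bR)\setminus\{0\}}{\dualC{S_j'(u)}{u}=0}$, which contains $\mathcal N_j$, and then to identify the minimizer with an element of $\mathcal N_j$. Writing $\norm{u}{c}^2:=\norm{D_x^{\sigma/2}u}{L^2}^2+c\norm{u}{L^2}^2$, I would first analyse the fibering map $t\mapsto S_j(tu)$: it grows like $\tfrac{t^2}{2}\norm{u}{c}^2$ near $t=0$ and tends to $-\infty$ as $t\to\infty$, the leading term $-\tfrac{t^{q+1}}{q+1}\int|u|^{q+1}$ being the same for $j=1,2$. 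Differentiating shows its unique positive critical point is a global maximum, so that $m_j:=\inf_{\mathcal M_j}S_j=\inf_{u\neq0}\max_{t>0}S_j(tu)$. A Gagliardo--Nirenberg estimate applied to the constraint $\norm{u}{c}^2=\int f_j(u)u$ together with the mountain-pass lower bound near $0$ yields a uniform bound $\norm{u}{c}\ge\delta>0$ on $\mathcal M_j$ and $m_j>0$; for $f_1$ one uses $f_1(u)u=-|u|^{p+1}+|u|^{q+1}\le|u|^{q+1}$, so that the defocusing power enters with the harmless sign and the estimate still closes.

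Next I would symmetrize. As each $f_j$ is odd, $F_j$ is even, whence $\int F_j(u^\ast)=\int F_j(u)$ and $\norm{u^\ast}{L^2}=\norm{u}{L^2}$ for the symmetric decreasing rearrangement $u^\ast$, while the fractional P\'olya--Szeg\H{o} inequality gives $\norm{D_x^{\sigma/2}u^\ast}{L^2}\le\norm{D_x^{\sigma/2}u}{L^2}$. Hence $S_j(tu^\ast)\le S_j(tu)$ for all $t>0$, so a minimizing sequence for $m_j$ may be taken nonnegative, even and nonincreasing in $|x|$. Applying the same comparison to $|\phi|$ for a hypothetical sign-changing ground state $\phi$, and using that equality in $(|a|-|b|)^2\le(a-b)^2$ forces $\phi(x)\phi(y)\ge0$, shows that no ground state can change sign, which gives the final assertion of the theorem.

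The core is compactness on the noncompact line. A symmetric decreasing $u_n$ bounded in $H^{\sigma/2}$ satisfies the Strauss-type bound $|u_n(x)|\le\norm{u_n}{L^2}(2|x|)^{-1/2}$, so the tails $\int_{|x|>R}|u_n|^{r}$ are uniformly small for $r\in(2,2^\ast_\sigma)$; together with the local compact embedding $H^{\sigma/2}(-R,R)\hookrightarrow L^r(-R,R)$ this gives $u_n\to u$ strongly in $L^{p+1}$ and $L^{q+1}$. Then $\int f_j(u_n)u_n\to\int f_j(u)u$ and $\int F_j(u_n)\to\int F_j(u)$, the lower bound on $\mathcal M_j$ forces $u\neq0$, and weak lower semicontinuity of $\norm{\cdot}{c}^2$ yields $\dualC{S_j'(u)}{u}\le0$ and, after rescaling $u$ by some $t_\ast\in(0,1]$ onto $\mathcal M_j$, the identity $S_j(t_\ast u)=m_j$. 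A Lagrange-multiplier argument shows $t_\ast u$ solves \eqref{eq:SPj}, so it lies in $\mathcal N_j$ and is a ground state; being an $L^2$-limit of symmetric decreasing functions it is itself even and nonincreasing in $|x|$.

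It remains to upgrade the solution. Bootstrapping $\phi=(D_x^\sigma+c)^{-1}f_j(\phi)$ via the smoothing $(D_x^\sigma+c)^{-1}\colon H^s\to H^{s+\sigma}$ and $\phi\in L^\infty$ gives $\phi\in H^{\sigma+1}$. Strict positivity follows from evaluating the equation at a hypothetical zero $x_0$ of the nonnegative $\phi$: there $D_x^\sigma\phi(x_0)=c_\sigma\,\mathrm{p.v.}\!\int(-\phi(y))|x_0-y|^{-1-\sigma}\,dy<0$ unless $\phi\equiv0$, contradicting $D_x^\sigma\phi(x_0)+c\phi(x_0)=f_j(\phi(x_0))=0$. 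I expect the compactness step to be the main obstacle: there is no compact Sobolev embedding on $\bR$, and the rearrangement together with the Strauss decay is precisely what restores it; for $f_1$ one must additionally verify that the sign-indefinite nonlinearity does not spoil the boundedness of the minimizing sequence in $H^{\sigma/2}$.
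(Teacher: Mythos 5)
Your proposal gets the substance of the theorem right, and its core existence argument takes a genuinely different route from the paper's. Both start from the same Nehari setup (your $\mathcal{M}_j$ is the paper's $\mathcal{K}_j$), the same lower bounds on the manifold (cf.\ Lemma \ref{lem:positiveness_of_d1}), and the same Lagrange-multiplier identification of constrained minimizers with solutions (Lemma \ref{lem:relation_G_and_M}). The divergence is at the compactness step: the paper treats \emph{arbitrary} minimizing sequences by concentration compactness, via Lieb's translation lemma (Lemma \ref{lem:Lieb}) and the Brezis--Lieb lemma (Lemma \ref{lem:Brezis-Lieb}) together with the auxiliary functional $I_{1}$ of \eqref{eq:A_3}, and only afterwards symmetrizes the ground state so obtained (Lemma \ref{prop:even_ground_state_of_SP1_2}); you symmetrize the minimizing sequence first --- legitimate thanks to the fibering characterization $m_j=\inf_{u\neq 0}\max_{t>0}S_j(tu)$ and P\'olya--Szeg\H{o} --- and recover compactness from the Strauss-type radial decay plus local Rellich. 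That is precisely the mechanism the paper reserves for the harder case $j=3$ (Lemmas \ref{lem:Radial}, \ref{lem:Strauss} and Proposition \ref{prop:existense_of_minimizer_of_j}). The trade-off: the paper's route proves precompactness modulo translations of every minimizing sequence (Lemma \ref{prop:existence_of_M1}), which is the stronger statement one wants for stability applications; yours is more elementary but yields only one symmetric minimizer. Your exclusion of sign-changing ground states also differs: you use the \emph{strict} inequality $\norm{D_{x}^{\sigma/2}|u|}{L^{2}}<\norm{D_{x}^{\sigma/2}u}{L^{2}}$ for sign-changing $u$ (equality in $(|a|-|b|)^{2}\leq(a-b)^{2}$ forces $ab\geq0$, so strictness holds when $\{u>0\}$ and $\{u<0\}$ both have positive measure --- you should state and prove this, since the paper only ever uses the non-strict \eqref{eq:A_101}), while the paper shows $|\phi|\in\mathcal{G}_{1}$ and contradicts strict positivity of nonnegative solutions (Propositions \ref{prop:positiveness_of_GS} and \ref{prop:no_sign_changes}). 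Your Hopf-type positivity argument (evaluating the pointwise singular integral at a zero) is a valid alternative to the paper's positive-kernel representation \eqref{eq:C_02}, granted the $H^{\sigma+1}$ regularity needed to justify the pointwise formula.

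The one genuine gap is the regularity step when $0<\sigma<1$. You bootstrap $\phi=(D_{x}^{\sigma}+c)^{-1}f_{j}(\phi)$ ``using $\phi\in L^{\infty}$,'' but for $\sigma\leq 1$ this membership is not free, and for $0<\sigma<1$ the $L^{2}$-based bootstrap cannot even start: $f_{j}(\phi)\in L^{2}$ requires $\phi\in L^{2q}$, whereas $H^{\sigma/2}(\bR)$ embeds only into $L^{r}$ for $r\leq 2^{\ast}_{\sigma}=2/(1-\sigma)$, and since $q$ may be as large as $2^{\ast}_{\sigma}-1=(1+\sigma)/(1-\sigma)$ one can have $2q>2^{\ast}_{\sigma}$ (e.g.\ $\sigma=1/2$, $q=5/2$). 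This is exactly what Case 3 of the paper's Proposition \ref{prop:regularity_of_solution} is built to handle: one first proves $\phi\in L^{\infty}$ from the convolution representation $\phi=C_{0}N_{c}^{\sigma}\ast f_{j}(\phi)$, using the integrability $N_{c}^{\sigma}\in L^{r}$ for $r<1/(1-\sigma)$ (Lemma \ref{lem:properties_of_integral_kernel}) and H\"older against $\phi\in L^{2^{\ast}_{\sigma}}$ with the exponents chosen in \eqref{eq:AppB_03}--\eqref{eq:AppB_04}; only then does the Fourier-side iteration run. Your sketch contains no substitute for this step (and it also implicitly uses a fractional chain rule to propagate $f_{j}(\phi)$ through the $H^{s}$ scale). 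For $\sigma>1$ your argument is fine since $H^{\sigma/2}(\bR)\hookrightarrow L^{\infty}(\bR)$, and for $\sigma=1$ one can start from $H^{1/2}(\bR)\hookrightarrow L^{r}(\bR)$, $r<\infty$, as in the paper's Case 2; but for $0<\sigma<1$ you need the kernel estimate, or some equivalent gain of integrability, and it is missing.
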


\begin{thm}
  Let $0 < \sigma \leq 1$ and set 
  \begin{align}
    \alpha := \frac{q-1}{q-p}, \quad \beta := \frac{p-1}{q-p}, \quad c_{0} := \frac{2(q-p)(p-1)^{\beta}(q+1)^{\beta}}{(p+1)^{\alpha}(q-1)^{\alpha}}.
  \end{align}
  If $c\in(0,c_{0})$, then there exists a ground state of \eqref{eq:SP3} which belongs to $H^{\sigma+1}(\bR)$ and is positive, even, and decreasing in $|x|$. \label{Thm:SP3}
\end{thm}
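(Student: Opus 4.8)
The plan is to construct the ground state by a constrained minimization of Berestycki--Lions type, adapted to the operator $D_x^{\sigma}$, rather than by the Nehari fibering argument that is natural for $f_1,f_2$: for $f_3$ the quotient $s\mapsto f_3(s)/s=|s|^{p-1}-|s|^{q-1}$ is not monotone, so the fibering map need not have a unique critical point and the Nehari approach becomes delicate. Writing $G_3(s):=F_3(s)-\tfrac{c}{2}s^2$, so that $S_3(u)=\tfrac12\norm{D_x^{\sigma/2}u}{L^2}^2-\int_{\bR}G_3(u)\,dx$, I would study
\begin{align}
  T_\ast:=\inf\SET{\tfrac12\norm{D_x^{\sigma/2}u}{L^2}^2}{u\in H^{\sigma/2}(\bR),\ \int_{\bR}G_3(u)\,dx=1}.
\end{align}
The subcriticality $q<2^\ast_\sigma-1$ gives the embedding $H^{\sigma/2}(\bR)\hookrightarrow L^{q+1}(\bR)$, so each functional above is well defined and continuous, and $T_\ast\geq0$.

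The first decisive step, and the one where the hypothesis $c<c_0$ enters, is to show that the constraint set is nonempty. Since the constraint depends only on the distribution function of $u$, it suffices to exhibit a single value $s_0>0$ with $G_3(s_0)>0$; dilating a plateau-shaped test function then lets $\int_{\bR}G_3\,dx$ take any prescribed positive value. Now $G_3(s_0)>0$ for some $s_0>0$ is equivalent to
\begin{align}
  \sup_{s>0}\left(\frac{s^{p-1}}{p+1}-\frac{s^{q-1}}{q+1}\right)>\frac{c}{2}.
\end{align}
A direct computation shows this supremum is attained at $s_\ast=\big(\tfrac{(p-1)(q+1)}{(p+1)(q-1)}\big)^{1/(q-p)}$ and equals $\tfrac{q-p}{(p+1)(q-1)}\,s_\ast^{\,p-1}$; simplifying with the relation $\alpha=\beta+1$ identifies this value with $c_0/2$. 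Hence the constraint set is nonempty precisely when $c<c_0$, which both explains the constant and opens the minimization.

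The hard part will be the compactness of a minimizing sequence, both because the problem on $\bR$ is translation invariant and because the constraint does not by itself bound $\norm{u}{L^2}$. My plan is first to replace each element by its symmetric-decreasing rearrangement: by the fractional P\'olya--Szeg\H{o} inequality (as used in \cite{Frank-Lenzmann}) the kinetic energy $\norm{D_x^{\sigma/2}u}{L^2}^2$ does not increase while $\int_{\bR}G_3(u)\,dx$ is preserved, so I may assume the minimizing sequence is even, nonnegative and decreasing in $|x|$. Such functions obey the pointwise bound $|u(x)|\lesssim|x|^{-1/2}\norm{u}{L^2}$, which restores compactness of the embedding into $L^{q+1}(\bR)$ once the sequence is shown to be bounded in $H^{\sigma/2}(\bR)$; controlling $\norm{u}{L^2}$ through the constraint together with the defocusing $q$-term, and then passing to the limit, yields a minimizer $\phi\geq0$ that is even and decreasing in $|x|$ and attains $T_\ast$.

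Finally I would recover a solution of \eqref{eq:SP3}. The Euler--Lagrange equation for the minimizer reads $D_x^{\sigma}\phi=\theta\,G_3'(\phi)=\theta\big(f_3(\phi)-c\phi\big)$ for a Lagrange multiplier $\theta$, and the Pohozaev identity associated with the dilation $\phi(\cdot/\lambda)$ fixes the sign of $\theta$: for $0<\sigma<1$ it forces $\theta>0$, so that $\psi:=\phi(\cdot/\theta^{1/\sigma})$ solves $D_x^{\sigma}\psi+c\psi-f_3(\psi)=0$ exactly, while the Benjamin--Ono borderline $\sigma=1$ makes this relation degenerate and must be treated by a separate limiting argument. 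Strict positivity and monotonicity pass from the rearranged minimizer through the strong maximum principle for $D_x^{\sigma}$, and an elliptic bootstrap using $q<2^\ast_\sigma-1$ raises the regularity to $H^{\sigma+1}(\bR)$. That $\psi$ is in fact a ground state, i.e.\ minimizes $S_3$ over $\mathcal{N}_3$, then follows from the standard Berestycki--Lions characterization: any nontrivial solution can be dilated onto the constraint set, so its action is at least $S_3(\psi)$.
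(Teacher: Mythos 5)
Your proposal is, in substance, the paper's own strategy (\S\ref{subsection:Pohizaev}): the same diagnosis of why the Nehari method fails for $f_{3}$ (Remark \ref{rem:difficulty}), the same Berestycki--Lions constrained minimization --- your $\int_{\bR}G_{3}(u)\,dx$ is exactly the paper's $P_{c}(u)$, and your $T_{\ast}$ is the paper's $j_{3}$ up to the factor $\sigma$ --- the same sharp identification of the threshold (your computation that $\sup_{s>0}\bigl(\tfrac{s^{p-1}}{p+1}-\tfrac{s^{q-1}}{q+1}\bigr)=c_{0}/2$ is correct and is precisely what underlies Lemma \ref{lem:nonempty_of_constraint}), the same compactness scheme (rearrangement, the decay bound of Lemma \ref{lem:Radial}, Strauss's Lemma \ref{lem:Strauss}), and the same endgame (Lagrange multiplier, dilation to remove it, Pohozaev identity of Lemma \ref{lem:Pohozaev} to show the resulting solution minimizes $S_{3}$ over $\mathcal{N}_{3}$). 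For $0<\sigma<1$ your sketch is sound and matches Propositions \ref{prop:existense_of_minimizer_of_j} and \ref{prop:existence_of_ground_state_of_SP3}.

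The genuine gap is the case $\sigma=1$, which the theorem asserts and which is the case actually needed in \S\ref{section:application}. You flag it as ``degenerate'' and defer to ``a separate limiting argument,'' but none is given, and the failure is structural rather than technical: for $\sigma=1$ one has $\norm{D_{x}^{1/2}u(\cdot/\lambda)}{L^{2}}^{2}=\norm{D_{x}^{1/2}u}{L^{2}}^{2}$ while $P_{c}(u(\cdot/\lambda))=\lambda P_{c}(u)$, so pairing the Euler--Lagrange equation with the dilation generator (equivalently, applying Lemma \ref{lem:Pohozaev} to the equation $D_{x}v=\theta\,(f_{3}(v)-cv)$) yields $\theta\,P_{c}(v)=0$; on your constraint $P_{c}(v)=1$ this forces $\theta=0$, hence $D_{x}v=0$ and $v\equiv0$, a contradiction. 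In other words, for $\sigma=1$ the infimum over $\set{v}{P_{c}(v)=1}$ is never attained, and moreover every actual solution of \eqref{eq:SP3} satisfies $P_{c}=0$, so it never lies on your constraint set and your final comparison with $\mathcal{N}_{3}$ collapses. The paper's resolution is not a limiting argument but a change of constraint: for $\sigma=1$ it minimizes $J_{3}$ over $\set{v\neq0}{P_{c}(v)=0}$, uses the dilation freedom (which now leaves both $J_{3}$ and the constraint invariant) to normalize $\norm{u_{n}}{L^{2}}=1$ along the minimizing sequence --- this restores the coercivity your constraint was supposed to provide --- and then the minimizer, after the dilation eliminating the multiplier, stays on $\{P_{c}=0\}$ with the same value of $J_{3}$, hence is a ground state because, by the Pohozaev identity, all of $\mathcal{N}_{3}$ lies in $\{P_{c}=0\}$ and $S_{3}=J_{3}$ there. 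Without this (or an equally concrete substitute), your argument proves the theorem only for $0<\sigma<1$.
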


\begin{rem}
  In Theorem \ref{Thm:SP3}, we do not deny the existence of a ground state of \eqref{eq:SP3} which has sign changes.
\end{rem}

\begin{rem}
  We can also obtain a negative ground state of \eqref{eq:SPj} if it has a positive ground state.
  Indeed, let $\phi\in\mathcal{G}_{j}$ be positive.
  Then we obtain $-\phi\in\mathcal{N}_{j}$ and $S_{j}(-\phi)=S_{j}(\phi)$, which means $-\phi\in\mathcal{G}_{j}$.
\end{rem}

Applying Theorems \ref{Thm:SP1_2} and \ref{Thm:SP3}, we obtain the classification results of \eqref{eq:SP}:
\begin{thm}
  Let $c>0, \ p,\ q \in \mathbb{N}, \ 2 \leq p < q$.
  Then the followings hold for \eqref{eq:SP}:
  \begin{enumerate}
    \item Assume that $p$ is odd and $q$ is odd.
      Then there exists a ground state which is positive, even, and decreasing in $|x|$.
      There also exists a ground state which is negative, even, and increasing in $|x|$.  \label{case:odd_odd}
    \item Assume that $p$ is odd and $q$ is even. 
      Then there exists a ground state which is positive, even, and decreasing in $|x|$, while there do not exist any negative solutions. \label{case:odd_even}
    \item Assume that $p$ is even and $q$ is odd.
      Then there exists a solution which is positive, even, and decreasing in $|x|$.
      Moreover, there exists a ground state which is negative, even, and increasing in $|x|$.
      In addition, none of the positive solutions is a ground state. \label{case:even_odd}
    \item Assume that $p$ is even and $q$ is even.
      Then there exists a solution which is positive, even, and decreasing in $|x|$.
      Moreover, there exists a solution which is negative, even, and increasing in $|x|$ if $c \in (0,c_{0})$, where $c_{0}$ is defined in Theorem \ref{Thm:SP3}. \label{case:even_even}
  \end{enumerate} \label{thm:classification}
\end{thm}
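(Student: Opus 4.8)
The plan is to reduce the single equation \eqref{eq:SP00} to the three model problems (SP1)--(SP3) by separating solutions according to their sign and exploiting the parities of $p$ and $q$. The basic algebraic observation is that for positive $\phi$ one has $\phi^{p}=|\phi|^{p-1}\phi$ and $\phi^{q}=|\phi|^{q-1}\phi$ irrespective of parity, so on the positive cone \eqref{eq:SP00} coincides with (SP1) and the action $S$ coincides with $S_{1}$; hence Theorem~\ref{Thm:SP1_2} (with $\sigma=1$, so $H^{\sigma+1}=H^{2}$) already furnishes a positive, even, decreasing solution in all four cases. For negative solutions I would set $\phi=-\psi$ with $\psi>0$: substituting and using $(-\psi)^{p}=(-1)^{p}\psi^{p}$, $(-\psi)^{q}=(-1)^{q}\psi^{q}$, the equation for $\psi$ becomes (SP1) when $p,q$ are odd, (SP2) when $p$ is even and $q$ is odd, (SP3) when $p,q$ are even, and $D_{x}\psi+c\psi+\psi^{p}+\psi^{q}=0$ when $p$ is odd and $q$ is even. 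In this last case, pairing the $\psi$-equation with $\psi$ gives $\norm{D_{x}^{1/2}\psi}{L^{2}}^{2}+c\norm{\psi}{L^{2}}^{2}+\int_{\bR}(\psi^{p+1}+\psi^{q+1})\,dx=0$, impossible for a nontrivial $\psi\ge0$; this yields the nonexistence of negative solutions in case \ref{case:odd_even}. Applying Theorem~\ref{Thm:SP1_2} to (SP1)/(SP2) and Theorem~\ref{Thm:SP3} to (SP3) (the latter requiring $c\in(0,c_{0})$) then produces the negative, even, increasing solutions claimed in cases \ref{case:odd_odd}, \ref{case:even_odd}, \ref{case:even_even}.

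The decisive step is to decide, in each case, whether these sign-definite solutions are genuine ground states of \eqref{eq:SP00}, and I would do this by analyzing the symmetry of $F(s)=-\frac{s^{p+1}}{p+1}+\frac{s^{q+1}}{q+1}$ under $s\mapsto-s$. When $p,q$ are odd, $F$ is even and \eqref{eq:SP00} is literally (SP1), so its ground states are exactly those of (SP1), giving both a positive and a negative ground state (case \ref{case:odd_odd}). When $p$ is odd and $q$ even, the odd high-power term forces $F(s)\le F(|s|)$ pointwise, whence $S(|v|)\le S(v)$ for every $v$ by the rearrangement inequality $\norm{D_{x}^{1/2}|v|}{L^{2}}\le\norm{D_{x}^{1/2}v}{L^{2}}$ from the proof of Theorem~\ref{Thm:SP1_2}; thus any ground state may be taken nonnegative and reduces to the positive ground state of (SP1) (case \ref{case:odd_even}). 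When $p$ is even and $q$ odd, the inequality reverses to $F(s)\le F(-|s|)$, so $S(-|v|)\le S(v)$ and every ground state is nonpositive, which both produces the negative ground state and excludes positive ground states (case \ref{case:even_odd}). Finally, when $p,q$ are even, $F$ is odd, so neither $|v|$ nor $-|v|$ lowers $S$ in general and no sign can be singled out, which is exactly why case \ref{case:even_even} asserts only the existence of a positive and a negative solution.

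The main obstacle I anticipate is the sign-definiteness argument in cases \ref{case:odd_even} and \ref{case:even_odd}: since the nonlinearity of \eqref{eq:SP00} is not globally any of $f_{1},f_{2},f_{3}$, one must re-run the variational scheme of Theorem~\ref{Thm:SP1_2} directly for $S$, verifying that a minimizer over the Nehari manifold of \eqref{eq:SP00} exists and inherits the definite sign dictated by the pointwise comparison of $F$, together with evenness and monotonicity via symmetric-decreasing rearrangement. For the sharper assertion in case \ref{case:even_odd} that \emph{no} positive solution is a ground state, I would reinforce the rearrangement step by a direct level comparison: since $F_{2}(u)>F_{1}(u)$ for $u>0$ one has $S_{2}(u)<S_{1}(u)$ on the positive cone, and the fibering characterization $m_{j}=\inf_{u>0}\max_{t>0}S_{j}(tu)$ then gives $m_{2}<m_{1}$; as $S$ equals $S_{1}$ on a positive solution and equals $S_{2}(\psi)$ on the negative solution $-\psi$, this forces every positive solution to have strictly larger action than the negative ground state. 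The regularity, evenness, and monotonicity claims in all cases are then inherited verbatim from Theorems~\ref{Thm:SP1_2} and \ref{Thm:SP3}.
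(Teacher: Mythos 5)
Your proposal is correct in substance and, for most of the theorem, runs along the same lines as the paper: the parity/sign reduction of \eqref{eq:SP} to \eqref{eq:SP1}, (SP2), \eqref{eq:SP3}, the use of Theorems \ref{Thm:SP1_2} and \ref{Thm:SP3} for the existence assertions, and the pairing argument ruling out negative solutions in case (\ref{case:odd_even}), which is exactly the paper's Remark \ref{rem:end_of_1_2} on the nonlinearity $f_{4}$. The genuine divergence is in case (\ref{case:even_odd}). The paper introduces the Nehari objects $K$, $\mathcal{K}$, $d$, $\mathcal{A}$, $a$ for \eqref{eq:SP} and proves $d < d_{1} \leq a$; its first inequality takes a positive $\phi_{1}\in\mathcal{G}_{1}$ and asserts that $\psi_{1}:=-\phi_{1}$ is a negative solution of \eqref{eq:SP}, whence $d = S(\phi) \leq S(\psi_{1}) < S_{1}(\psi_{1}) = d_{1}$. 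But, as your own decomposition shows, when $p$ is even and $q$ is odd the negation of a positive solution of \eqref{eq:SP1} solves $D_{x}\psi + c\psi - \psi^{p} - \psi^{q} = 0$, i.e.\ the (SP2)-type equation, not \eqref{eq:SP}; indeed one computes $K(-\phi_{1}) = K_{1}(\phi_{1}) - 2\int_{\bR}\phi_{1}^{p+1}\,dx = -2\int_{\bR}\phi_{1}^{p+1}\,dx < 0$, so $-\phi_{1}$ lies neither in $\mathcal{N}$ nor in $\mathcal{K}$ and the step $d \leq S(-\phi_{1})$ needs repair (for instance by rescaling $-\phi_{1}$ onto $\mathcal{K}$ and using the $I_{1}$-type decomposition). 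Your route avoids this entirely: the genuine negative solution is $-\psi_{2}$ with $\psi_{2}\in\mathcal{G}_{2}$ positive, it satisfies $S(-\psi_{2}) = S_{2}(\psi_{2}) = d_{2}$, every positive solution $v$ of \eqref{eq:SP} satisfies $S(v) = S_{1}(v) \geq d_{1}$, and $d_{2} < d_{1}$ follows from $F_{2} > F_{1}$ on nonzero values together with the fact that each fibering map $t\mapsto S_{j}(tu)$, $j=1,2$, has a unique maximum, attained exactly where the ray crosses $\mathcal{K}_{j}$ (so $d_{2} \leq S_{2}(t_{2}\phi_{1}) < S_{1}(t_{2}\phi_{1}) \leq S_{1}(\phi_{1}) = d_{1}$). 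Since $-\psi_{2}\in\mathcal{N}$, no positive solution can minimize $S$ over $\mathcal{N}$, which is the claim. This level comparison between $d_{1}$ and $d_{2}$ is a sound and arguably cleaner substitute for the paper's chain.

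Two caveats. First, your rearrangement-only argument in case (\ref{case:even_odd}) (``$S(-|v|)\leq S(v)$, hence every ground state is nonpositive'') is not conclusive on its own: $-|v|$ need not belong to $\mathcal{N}$ or $\mathcal{K}$, so the inequality does not directly compare admissible competitors; it is precisely your level comparison (or a rescaling onto the Nehari set) that makes the exclusion rigorous, so that reinforcement is essential rather than optional. Second, like the paper, you defer the existence of ground states of \eqref{eq:SP} itself (needed for the ``there exists a ground state'' assertions in cases (\ref{case:odd_odd})--(\ref{case:even_odd})) to a re-run of the scheme of \S\ref{subsection:Nehari}; the correction required there is to control the sign of $\int_{\bR} u^{p+1}\,dx$ on the Nehari set before the Lagrange multiplier step, and your pointwise comparison of $F(s)$ with $F(|s|)$ (resp.\ $F(-|s|)$) is the right tool for that, so the plan is coherent.
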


\begin{rem}
  In Case (\ref{case:even_even}) of Theorem \ref{thm:classification}, ground states are not identified yet.
\end{rem}

In the following, we will split the proof of Theorems \ref{Thm:SP1_2} and \ref{Thm:SP3} into two parts.
We prove the existence of ground states of \eqref{eq:SPj} in \S\ref{Section:existence_of_GS}, and the regularity and positivity of solutions of \eqref{eq:SPj} in \S \ref{section:properties}.
Finally, the proof of Theorem \ref{thm:classification} will be given in \S\ref{section:application}.

\section{Existence of ground states} \label{Section:existence_of_GS}

In this section, we consider the existence of ground states of \eqref{eq:SPj}.
Before considering the existence, we recall the symmetric decreasing rearrangement of a nonnegative function, which plays an important role to obtain a solution which is positive, even, and decreasing in $|x|$.

Let $u \in H^{\sigma/2}(\bR)$.
Then we obtain $|u| \in H^{\sigma/2}(\bR)$ and the following inequality holds:
\begin{align}
  \norm{D_{x}^{\sigma/2}|u|}{L^{2}} \leq \norm{D_{x}^{\sigma/2}u}{L^{2}}. \label{eq:A_101}
\end{align}
Therefore, we can define the symmetric decreasing rearrangement $|u|^{\ast}$ of $|u|$, and we obtain
\begin{align}
  \norm{D_{x}^{\sigma/2}|u|^{\ast}}{L^{2}} \leq \norm{D_{x}^{\sigma/2}|u|}{L^{2}}. \label{eq:A_102}
\end{align}
We can prove these properties similarly to \cite[Lemma 8.15]{Angulo}.
In the following, we simply denote $u^{\ast}:=|u|^{\ast}$ for $u\in H^{\sigma/2}(\bR)$.

\subsection{Case $j=1,2$} \label{subsection:Nehari}
In this subsection, we prove the following proposition.
\begin{prop}
  For $j=1,2$, there exists $\phi\in\mathcal{G}_{j}$ which is nonpositive, even, and decreasing in $|x|$. \label{prop:existense_of_gs_12}
\end{prop}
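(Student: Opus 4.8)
The plan is to realise a ground state as a symmetric minimiser on the Nehari manifold and then invoke the oddness of $f_j$. First I would introduce the Nehari functional $K_j(u):=\dualC{S_j'(u)}{u}=\norm{D_x^{\sigma/2}u}{L^2}^2+c\norm{u}{L^2}^2-\int_{\bR}f_j(u)u\,dx$ and the set $\mathcal{M}_j:=\set{u\in H^{\sigma/2}(\bR)\setminus\{0\}}{K_j(u)=0}$, observing that $\mathcal{N}_j\subseteq\mathcal{M}_j$. For fixed $u\neq0$ I would analyse the fibering map $g_u(t):=S_j(tu)$, which, since $F_j(s)=F_j(|s|)$, has the form $g_u(t)=\tfrac{t^2}{2}A_u\pm\tfrac{t^{p+1}}{p+1}B_u-\tfrac{t^{q+1}}{q+1}C_u$ with $A_u=\norm{D_x^{\sigma/2}u}{L^2}^2+c\norm{u}{L^2}^2$, $B_u=\int_{\bR}|u|^{p+1}$, $C_u=\int_{\bR}|u|^{q+1}$ all positive (the middle sign being $+$ for $j=1$ and $-$ for $j=2$). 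In both cases $g_u$ increases from $0$ and tends to $-\infty$, so it has a unique maximiser $t_u>0$ and a unique Nehari point $t_u u\in\mathcal{M}_j$ with $S_j(t_u u)=\max_{t>0}g_u(t)$. Setting $m_j:=\inf_{\mathcal{M}_j}S_j$, eliminating one power via $K_j=0$ lets me write $S_j=\lambda_1 A_u+\lambda_2 X_u$ on $\mathcal{M}_j$ with $\lambda_1,\lambda_2>0$ and $X_u\in\{B_u,C_u\}$ (using $1<p<q$), so $S_j>0$ there; the subcritical embedding $H^{\sigma/2}(\bR)\hookrightarrow L^{r}(\bR)$ for $2\le r<2^{\ast}_{\sigma}$ (available since $p,q<2^{\ast}_{\sigma}-1$) bounds $A_u$ away from $0$ on $\mathcal{M}_j$, whence $m_j>0$.

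Next I would symmetrise a minimising sequence. Given $(u_n)\subset\mathcal{M}_j$ with $S_j(u_n)\to m_j$, the rearrangement $u_n^{\ast}$ satisfies $\norm{u_n^{\ast}}{L^2}=\norm{u_n}{L^2}$ and $\int_{\bR}F_j(u_n^{\ast})=\int_{\bR}F_j(u_n)$ (as $F_j$ depends only on $|u|$), while \eqref{eq:A_101}--\eqref{eq:A_102} give $\norm{D_x^{\sigma/2}u_n^{\ast}}{L^2}\le\norm{D_x^{\sigma/2}u_n}{L^2}$. Hence $S_j(tu_n^{\ast})\le S_j(tu_n)$ for every $t>0$, so $\max_{t>0}S_j(tu_n^{\ast})\le\max_{t>0}S_j(tu_n)=S_j(u_n)\to m_j$, the last equality because $u_n\in\mathcal{M}_j$. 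Therefore $v_n:=t_{u_n^{\ast}}u_n^{\ast}\in\mathcal{M}_j$ is again a minimising sequence, now consisting of nonnegative, even functions that are decreasing in $|x|$.

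The heart of the argument, and the step I expect to be the main obstacle, is compactness: on $\bR$ the embedding $H^{\sigma/2}\hookrightarrow L^{r}$ is not compact, and both translations and spreading threaten loss of mass. The sequence $(v_n)$ is bounded in $H^{\sigma/2}(\bR)$ since $S_j$ controls $A_{v_n}\sim\norm{v_n}{H^{\sigma/2}}^2$ on $\mathcal{M}_j$. Evenness removes translation invariance, and monotonicity supplies the decay bound $v_n(x)^2\,2|x|\le\norm{v_n}{L^2}^2$, i.e. $v_n(x)\le\norm{v_n}{L^2}(2|x|)^{-1/2}$; this uniform tail control substitutes for the Strauss radial lemma (unavailable in dimension one) and yields compactness of the symmetric decreasing class into $L^{r}(\bR)$ for $2<r<2^{\ast}_{\sigma}$. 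Passing to a subsequence, $v_n\rightharpoonup\phi$ in $H^{\sigma/2}$ and $v_n\to\phi$ in $L^{p+1}\cap L^{q+1}$, so $B_{v_n}\to B_{\phi}$ and $C_{v_n}\to C_{\phi}$. If $\phi=0$ these vanish, contradicting $K_j(v_n)=0$ together with the lower bound on $A_{v_n}$; hence $\phi\neq0$. Weak lower semicontinuity of $A$ gives $K_j(\phi)\le0$, so the fibering provides $t_{\phi}\in(0,1]$ with $t_{\phi}\phi\in\mathcal{M}_j$; then the chain $m_j\le S_j(t_{\phi}\phi)\le\lambda_1 A_{\phi}+\lambda_2 X_{\phi}\le\liminf(\lambda_1 A_{v_n}+\lambda_2 X_{v_n})=m_j$ (using $t_{\phi}\le1$ and that each power is $\ge2$) forces $t_{\phi}=1$ and $S_j(\phi)=m_j$, so $\phi$ minimises $S_j$ on $\mathcal{M}_j$.

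Finally I would upgrade this constrained minimiser to a genuine solution. By the Lagrange multiplier rule $S_j'(\phi)=\mu K_j'(\phi)$; pairing with $\phi$ and using $K_j(\phi)=0$ gives $0=\mu\dualC{K_j'(\phi)}{\phi}$, and a direct computation (for instance $\dualC{K_1'(\phi)}{\phi}=-(q-1)A_{\phi}-(q-p)B_{\phi}<0$, and likewise for $j=2$) shows $\dualC{K_j'(\phi)}{\phi}\neq0$, so $\mu=0$ and $\phi\in\mathcal{N}_j$. Since $\mathcal{N}_j\subseteq\mathcal{M}_j$, this yields $m_j\le\inf_{\mathcal{N}_j}S_j\le S_j(\phi)=m_j$, hence $\phi\in\mathcal{G}_j$, with $\phi\ge0$ even and decreasing in $|x|$. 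As $f_j$ is odd, $S_j(-\phi)=S_j(\phi)$ and $-\phi\in\mathcal{G}_j$ as well, which furnishes the nonpositive, even representative whose modulus is decreasing in $|x|$ asserted in the statement.
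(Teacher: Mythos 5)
Your proposal is correct, but it restores compactness by a genuinely different mechanism than the paper does for $j=1,2$. The Nehari scaffolding is the same: your manifold $\mathcal{M}_j$ is the paper's $\mathcal{K}_{1}$, your reduced functional $\lambda_{1}A_{u}+\lambda_{2}X_{u}$ is the paper's $I_{1}$ in \eqref{eq:A_3}, your positivity of $m_j$ is Lemma \ref{lem:positiveness_of_d1}, and your Lagrange-multiplier step via $\dualC{K_j'(\phi)}{\phi}<0$ is exactly the argument of Lemma \ref{lem:relation_G_and_M}. The divergence is in the heart of the existence proof. The paper runs a concentration--compactness argument on an \emph{arbitrary} minimizing sequence: Lieb's translation lemma (Lemma \ref{lem:Lieb}) produces a nontrivial weak limit after recentering, the Brezis--Lieb lemma (Lemma \ref{lem:Brezis-Lieb}) splits $I_{1}$ and $K_{1}$ along the sequence, and Lemma \ref{lem:negativeness_of_I1} excludes dichotomy; the symmetric-decreasing representative is extracted only afterwards, by rearranging an already-found ground state (Lemma \ref{prop:even_ground_state_of_SP1_2}). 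You instead symmetrize the minimizing sequence at the outset --- reprojecting $u_{n}^{\ast}$ onto the Nehari manifold via the fibering map, which is legitimate because $A_{u_{n}^{\ast}}\leq A_{u_{n}}$ while the $L^{p+1}$ and $L^{q+1}$ masses are preserved and the fibering map has a unique maximum --- and then get strong $L^{p+1}\cap L^{q+1}$ convergence from the pointwise decay bound for symmetric-decreasing functions (the paper's Lemma \ref{lem:Radial}) plus local Rellich compactness. That is precisely the strategy the paper reserves for the case $j=3$ (Proposition \ref{prop:existense_of_minimizer_of_j}), transplanted to $j=1,2$. The trade-off: your route is more self-contained (it needs neither Lieb's lemma, whose proof occupies Appendix \ref{appendix:proof_of_Lieb}, nor Brezis--Lieb), but it only yields the single symmetric minimizer; the paper's Lemma \ref{prop:existence_of_M1} proves the stronger fact that \emph{every} minimizing sequence is precompact modulo translations, which is the statement one actually wants for the orbital-stability applications motivating the paper. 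One last cosmetic point: like the paper's own construction, your argument produces a \emph{nonnegative} ground state and then passes to $-\phi$; the word ``nonpositive'' in the statement should be read in light of that sign flip (cf.\ Remark 2.4 of the paper), so your final sentence handles it appropriately.
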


To simplify the discussion of Proposition \ref{prop:existense_of_gs_12}, we focus on the equation
\begin{align}
  D_{x}^{\sigma}\phi + c\phi + |\phi|^{p-1}\phi - |\phi|^{q-1}\phi = 0, \quad x\in\bR. \tag{SP1} \label{eq:SP1}
\end{align}
We can prove the existence for (SP2) in the same way.
In this case, the action functional $S_{1}$ is written as
\begin{align}
  S_{1}(u) = \frac{1}{2}\norm{D_{x}^{\sigma/2}u}{L^{2}}^{2} + \frac{c}{2}\norm{u}{L^{2}}^{2} + \frac{1}{p+1}\norm{u}{L^{p+1}}^{p+1} - \frac{1}{q+1}\norm{u}{L^{q+1}}^{q+1}. \label{eq:def_action_SP1}
\end{align}
Then using this functional, we define the Nehari functional $K_{1}$ as
\begin{align}
  K_{1}(u) &:= \dualC{S_{1}'(u)}{u} \\
  &= \norm{D_{x}^{\sigma/2}u}{L^{2}}^{2} + c\norm{u}{L^{2}}^{2} + \norm{u}{L^{p+1}}^{p+1} - \norm{u}{L^{q+1}}^{q+1}, \quad u\in H^{\sigma/2}(\bR), \label{eq:def_nehari_SP1}
\end{align}
and set
\begin{align}
  \mathcal{K}_{1} := \set{ v\in H^{\sigma/2}(\bR) \setminus \{ 0 \} }{ K_{1}(v)=0 }.
\end{align}
By the definition of $K_{1}$, the inclusion $\mathcal{N}_{1}\subset\mathcal{K}_{1}$ obviously holds.

First, show that $\mathcal{G}_{1}$ is not empty.
To prove this, we introduce the following value and set:
\begin{align}
  d_{1}:= \inf_{v\in\mathcal{K}_{1}} S_{1}(v), \quad
  \mathcal{M}_{1} := \set{ v\in\mathcal{K}_{1} }{S_{1}(v)=d_{1}}.
\end{align}

\begin{lem}
    If $\mathcal{M}_{1}$ is not empty, then $\mathcal{M}_{1}=\mathcal{G}_{1}$ holds. \label{lem:relation_G_and_M}
\end{lem}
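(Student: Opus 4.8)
The plan is to prove the two inclusions $\mathcal{M}_{1}\subseteq\mathcal{G}_{1}$ and $\mathcal{G}_{1}\subseteq\mathcal{M}_{1}$ separately; the engine for both is the claim that every minimizer on the Nehari set is a genuine critical point, i.e. $\mathcal{M}_{1}\subseteq\mathcal{N}_{1}$. Since $\mathcal{N}_{1}\subseteq\mathcal{K}_{1}$ is already known, this inclusion lets one identify the two infima.

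First I would show $\mathcal{M}_{1}\subseteq\mathcal{N}_{1}$ by a Lagrange multiplier argument. Fix $\phi\in\mathcal{M}_{1}$; since $\phi$ minimizes $S_{1}$ subject to the constraint $K_{1}=0$ and $S_{1},K_{1}\in C^{1}$, there is $\lambda\in\bR$ with $S_{1}'(\phi)=\lambda K_{1}'(\phi)$, provided the constraint is nondegenerate, i.e. $K_{1}'(\phi)\neq0$. To verify this and simultaneously force $\lambda=0$, I would pair $K_{1}'(\phi)$ with $\phi$. Using the explicit form of $K_{1}$ together with the constraint $K_{1}(\phi)=0$, a direct computation gives
\begin{align}
  \dualC{K_{1}'(\phi)}{\phi} = (1-q)\left( \norm{D_{x}^{\sigma/2}\phi}{L^{2}}^{2} + c\norm{\phi}{L^{2}}^{2} \right) + (p-q)\norm{\phi}{L^{p+1}}^{p+1},
\end{align}
which is strictly negative because $1<p<q$ and $\phi\neq0$. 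Hence $K_{1}'(\phi)\neq0$, so the multiplier exists, and pairing $S_{1}'(\phi)=\lambda K_{1}'(\phi)$ with $\phi$ yields $0=K_{1}(\phi)=\dualC{S_{1}'(\phi)}{\phi}=\lambda\dualC{K_{1}'(\phi)}{\phi}$. Since $\dualC{K_{1}'(\phi)}{\phi}\neq0$, we conclude $\lambda=0$, whence $S_{1}'(\phi)=0$ and $\phi\in\mathcal{N}_{1}$.

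With $\mathcal{M}_{1}\subseteq\mathcal{N}_{1}$ in hand, I would compare the infima. Since $\mathcal{N}_{1}\subseteq\mathcal{K}_{1}$, we have $d_{1}=\inf_{\mathcal{K}_{1}}S_{1}\leq\inf_{\mathcal{N}_{1}}S_{1}$; conversely, any $\phi\in\mathcal{M}_{1}\subseteq\mathcal{N}_{1}$ satisfies $S_{1}(\phi)=d_{1}$, so $\inf_{\mathcal{N}_{1}}S_{1}\leq d_{1}$. Thus the two infima agree, and each $\phi\in\mathcal{M}_{1}$ attains $\inf_{\mathcal{N}_{1}}S_{1}$, i.e. $\phi\in\mathcal{G}_{1}$, giving $\mathcal{M}_{1}\subseteq\mathcal{G}_{1}$. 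For the reverse inclusion, take $\phi\in\mathcal{G}_{1}\subseteq\mathcal{N}_{1}\subseteq\mathcal{K}_{1}$, so $S_{1}(\phi)\geq d_{1}$; choosing any $\psi\in\mathcal{M}_{1}$ (nonempty by hypothesis), the ground-state property gives $S_{1}(\phi)\leq S_{1}(\psi)=d_{1}$. Hence $S_{1}(\phi)=d_{1}$ with $\phi\in\mathcal{K}_{1}$, so $\phi\in\mathcal{M}_{1}$, and therefore $\mathcal{M}_{1}=\mathcal{G}_{1}$.

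The main obstacle is the first step: upgrading a constrained minimizer to an unconstrained critical point. The subtle point is not invoking the multiplier rule itself but ensuring the multiplier vanishes, which hinges entirely on the sign identity above. Here the strict inequalities $1<p<q$ and the nontriviality $\phi\neq0$ are essential, since they guarantee $\dualC{K_{1}'(\phi)}{\phi}<0$; this both certifies nondegeneracy of the constraint and, combined with $K_{1}(\phi)=0$, pins $\lambda$ to zero. A routine verification that the Lagrange rule applies in $H^{\sigma/2}(\bR)$ is standard once $K_{1}'(\phi)\neq0$ is established.
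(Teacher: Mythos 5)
Your proposal is correct and follows essentially the same route as the paper: a Lagrange multiplier argument on the constraint $\mathcal{K}_{1}$, with the multiplier forced to vanish by the strict negativity of $\dualC{K_{1}'(\phi)}{\phi}$, followed by the two inclusions exactly as in the paper (using nonemptiness of $\mathcal{M}_{1}$ for $\mathcal{G}_{1}\subset\mathcal{M}_{1}$). The only difference is cosmetic: where the paper obtains $\dualC{K_{1}'(\phi)}{\phi}<0$ by ``considering the graph'' of $\lambda\mapsto K_{1}(\lambda\phi)$, you verify the same inequality through the explicit identity $\dualC{K_{1}'(\phi)}{\phi}=(1-q)\left(\norm{D_{x}^{\sigma/2}\phi}{L^{2}}^{2}+c\norm{\phi}{L^{2}}^{2}\right)+(p-q)\norm{\phi}{L^{p+1}}^{p+1}$, which is a slightly more explicit rendering of the same step.
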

\begin{proof}
  First, we show $\mathcal{M}_{1}\subset\mathcal{G}_{1}$.
  Let $\phi \in \mathcal{M}_{1}$.
  Then we can see that $K_{1}(\phi) = 0$.
  Here we let $\lambda > 0$ and consider the function
  \begin{align}
    \lambda \mapsto K_{1}(\lambda \phi) = \lambda^{2} \left( \norm{D_{x}^{\sigma/2}\phi}{L^{2}}^{2} + c\norm{\phi}{L^{2}}^{2} \right) + \lambda^{p+1} \norm{\phi}{L^{p+1}}^{p+1} - \lambda^{q+1}\norm{\phi}{L^{q+1}}^{q+1}. \label{function:K1}
  \end{align}
  Considering the graph of \eqref{function:K1}, we obtain
  \begin{align}
    \left. \partial_{\lambda} K_{1}(\lambda\phi) \right|_{\lambda = 1} = \dualC{K_{1}'(\phi)}{\phi} < 0, \label{eq:A_2}
  \end{align}
  which implies $K_{1}'(\phi) \neq 0$.
  Then there exists a Lagrange multiplier $\mu \in \bR$ such that $S_{1}'(\phi) = \mu K_{1}'(\phi)$.
  From \eqref{eq:A_2} and $0 = K_{1}(\phi) = \dualC{S_{1}'(\phi)}{\phi} = \mu \dualC{K_{1}'(\phi)}{\phi}$, we see that $\mu = 0$, which implies $\phi\in\mathcal{N}_{1}$.
  Furthermore, we have $K_{1}(v) = 0$ for any $v\in\mathcal{N}_{1}$ and, by the definition of $\mathcal{M}_{1}$, we obtain $S_{1}(\phi) = d_{1} \leq S_{1}(v)$,
  which means $\phi\in\mathcal{G}_{1}$.

  Next, we show $\mathcal{G}_{1}\subset\mathcal{M}_{1}$.
  By the assumption of this lemma, we can take some $v\in\mathcal{M}_{1}$, and we have shown $\mathcal{M}_{1}\subset\mathcal{G}_{1}$ above.
  Then we have $S_{1}(\phi) \leq S_{1}(v) = d_{1}$ for any $\phi\in\mathcal{G}_{1}$.
  On the other hand, from $\phi\in\mathcal{K}_{1}$ and the definition of $d_{1}$, we can see that $d_{1} \leq S_{1}(\phi)$.
  This implies $\mathcal{G}_{1}\subset\mathcal{M}_{1}$.
\end{proof}

\begin{rem}
  It is difficult to verify the same claim as Lemma \ref{lem:relation_G_and_M} for the equation
  \begin{align}
    D_{x}^{\sigma}\phi + c \phi - |\phi|^{p-1}\phi + |\phi|^{q-1}\phi = 0 \tag{SP3} \label{eq:SP3}
  \end{align}
  because we hardly obtain the condition independent of $\phi\in\mathcal{M}_{3}$ that gives the same shape graph of $K_{3}(\lambda \phi)$ as $K_{1}(\lambda\phi)$
  (The definitions of $K_{3}$ and $\mathcal{M}_{3}$ are similar to those of $K_{1}$ and $\mathcal{M}_{1}$, respectively.)
  Therefore, we do not apply the same method to the existence of a ground state of \eqref{eq:SP3}. \label{rem:difficulty}
\end{rem}

Thanks to Lemma \ref{lem:relation_G_and_M}, we shall show that $\mathcal{M}_{1}$ is not empty to obtain the existence of a ground state of \eqref{eq:SP1}.
However, we claim a stronger statement.

\begin{lem}
  Let $(u_{n})_{n}$ be a minimizing sequence of $d_{1}$, that is, $(u_{n})_{n}\subset H^{\sigma/2}(\bR)$ satisfies $S_{1}(u_{n})\rightarrow d_{1}$ and $K_{1}(u_{n})\rightarrow 0$.
  Then there exists a sequence $(z_{n})_{n}\subset\bR$ such that, by taking a subsequence, we find $v\in\mathcal{M}_{1}$ which satisfies $u_{n}(\cdot + z_{n}) \rightarrow v$ in $H^{\sigma/2}(\bR)$. \label{prop:existence_of_M1}
\end{lem}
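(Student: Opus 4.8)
The plan is to prove compactness of the minimizing sequence up to translations via a concentration--compactness argument, the outcome being strong convergence of a translated subsequence to an element of $\mathcal{M}_{1}$. Throughout I write $A(u):=\norm{D_{x}^{\sigma/2}u}{L^{2}}^{2}+c\norm{u}{L^{2}}^{2}$, which is equivalent to $\norm{u}{H^{\sigma/2}}^{2}$, and $T(u):=S_{1}(u)-\frac{1}{q+1}K_{1}(u)$. First I would establish boundedness and $d_{1}>0$. Since $1<p<q$, both coefficients in
\[
  T(u)=\left(\frac{1}{2}-\frac{1}{q+1}\right)A(u)+\left(\frac{1}{p+1}-\frac{1}{q+1}\right)\norm{u}{L^{p+1}}^{p+1}
\]
are positive. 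Evaluating at $u_{n}$ gives $T(u_{n})=S_{1}(u_{n})-\frac{1}{q+1}K_{1}(u_{n})\to d_{1}$, so $(u_{n})_{n}$ is bounded in $H^{\sigma/2}(\bR)$. On $\mathcal{K}_{1}$ the same identity together with $K_{1}(u)=0$ yields $A(u)\le\norm{u}{L^{q+1}}^{q+1}$, and the subcritical Sobolev embedding (valid because $q+1<2^{\ast}_{\sigma}$) gives $\norm{u}{L^{q+1}}^{q+1}\le C\,A(u)^{(q+1)/2}$; since $q>1$ this forces a uniform lower bound $A(u)\ge\delta_{0}>0$ on $\mathcal{K}_{1}$, whence $d_{1}\ge(\frac{1}{2}-\frac{1}{q+1})\delta_{0}>0$.

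Next I would exclude vanishing. If $\sup_{y\in\bR}\int_{y-1}^{y+1}|u_{n}|^{2}\,dx\to0$, then Lions' vanishing lemma gives $\norm{u_{n}}{L^{r}}\to0$ for all $2<r<2^{\ast}_{\sigma}$, in particular for $r=p+1,q+1$; then $K_{1}(u_{n})\to0$ forces $A(u_{n})\to0$ and hence $S_{1}(u_{n})\to0$, contradicting $d_{1}>0$. Thus there are $\delta>0$ and $(z_{n})_{n}\subset\bR$ with $\int_{z_{n}-1}^{z_{n}+1}|u_{n}|^{2}\,dx\ge\delta$. Put $v_{n}:=u_{n}(\cdot+z_{n})$; by translation invariance of $S_{1}$ and $K_{1}$ this is again a minimizing sequence, and after passing to a subsequence $v_{n}\rightharpoonup v$ weakly in $H^{\sigma/2}(\bR)$ with $v_{n}\to v$ a.e. and in $L^{2}(-1,1)$ by Rellich's theorem, so that $v\neq0$.

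The crux is to upgrade this to strong convergence. Writing $w_{n}:=v_{n}-v$, the Brezis--Lieb lemma for the $L^{p+1}$ and $L^{q+1}$ terms and the Hilbert-space identity $A(v_{n})=A(v)+A(w_{n})+o(1)$ yield the splittings $S_{1}(v_{n})=S_{1}(v)+S_{1}(w_{n})+o(1)$ and $K_{1}(v_{n})=K_{1}(v)+K_{1}(w_{n})+o(1)$, so along a subsequence $K_{1}(w_{n})\to-K_{1}(v)$ and $T(w_{n})\to d_{1}-T(v)$. Recall from the analysis of \eqref{function:K1} that every $u\neq0$ admits a unique $\lambda_{\ast}(u)>0$ with $\lambda_{\ast}(u)u\in\mathcal{K}_{1}$, that $T$ is strictly increasing along rays, and that $K_{1}(u)\le0\iff\lambda_{\ast}(u)\le1$. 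I would first show $K_{1}(v)\le0$: if $K_{1}(v)>0$, then $K_{1}(w_{n})<0$ for large $n$, so rescaling $w_{n}$ by $\mu_{n}:=\lambda_{\ast}(w_{n})<1$ into $\mathcal{K}_{1}$ gives $d_{1}\le S_{1}(\mu_{n}w_{n})=T(\mu_{n}w_{n})\le T(w_{n})\to d_{1}-T(v)$, forcing $T(v)\le0$ and hence $v=0$, a contradiction. With $K_{1}(v)\le0$ we have $\lambda_{\ast}(v)\le1$, and weak lower semicontinuity gives $T(v)\le\liminf T(v_{n})=d_{1}$; the chain $d_{1}\le S_{1}(\lambda_{\ast}(v)v)=T(\lambda_{\ast}(v)v)\le T(v)\le d_{1}$ then forces $\lambda_{\ast}(v)=1$, i.e. $v\in\mathcal{K}_{1}$ with $S_{1}(v)=d_{1}$, so $v\in\mathcal{M}_{1}$. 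Finally, equality throughout forces $T(v_{n})\to T(v)$ with both nonnegative summands converging separately; in particular $A(v_{n})\to A(v)$, which together with $v_{n}\rightharpoonup v$ in the Hilbert space $(H^{\sigma/2}(\bR),A)$ yields $v_{n}\to v$ strongly, i.e. $u_{n}(\cdot+z_{n})\to v$ in $H^{\sigma/2}(\bR)$.

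The main obstacle is the sign-indefiniteness caused by the focusing term $-\frac{1}{q+1}\norm{u}{L^{q+1}}^{q+1}$: neither $K_{1}$ nor $S_{1}$ is weakly lower semicontinuous, so the decisive point is the exact (not merely Fatou) Brezis--Lieb splitting, which makes the rescaling argument for $K_{1}(v)\le0$ work and thereby rules out any loss of mass in the higher-power $L^{q+1}$ norm.
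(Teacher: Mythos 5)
Your proof is correct, and its variational core coincides with the paper's own argument: your functional $T$ is exactly the functional $I_{1}$ of \eqref{eq:A_3}, your splittings $S_{1}(v_{n})=S_{1}(v)+S_{1}(w_{n})+o(1)$ and $K_{1}(v_{n})=K_{1}(v)+K_{1}(w_{n})+o(1)$ are \eqref{eq:A_9}--\eqref{eq:A_10}, your inline proof of $d_{1}>0$ is Lemma \ref{lem:positiveness_of_d1}, and your ray analysis (unique $\lambda_{\ast}(u)$ with $\lambda_{\ast}(u)u\in\mathcal{K}_{1}$, strict monotonicity of $T$ along rays) is precisely the mechanism behind Lemma \ref{lem:negativeness_of_I1}; where the paper applies that lemma in contrapositive form to the remainders $v_{n}-v$, you run the same rescaling by contradiction on $K_{1}(v)>0$, which is logically the same step. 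The one genuinely different ingredient is how you produce the translations $(z_{n})_{n}$ and the nonzero weak limit: you exclude vanishing via Lions' concentration--compactness lemma (in its fractional $H^{\sigma/2}$ version, which you should cite or prove, since $\sigma/2<1$ always here), observing that vanishing would kill both $\norm{u_{n}}{L^{p+1}}$ and $\norm{u_{n}}{L^{q+1}}$ and hence force $S_{1}(u_{n})\to 0$, contradicting $d_{1}>0$. The paper instead extracts $\inf_{n}\norm{u_{n}}{L^{p+1}}>0$ from \eqref{eq:A_8} together with $d_{1}>0$ and then invokes Lieb's compactness lemma (Lemma \ref{lem:Lieb}, proved in Appendix \ref{appendix:proof_of_Lieb}), which converts a uniform lower bound on a single subcritical $L^{r}$ norm directly into a nontrivial weak limit after translation. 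The trade-off is minor: Lieb's lemma requires the appendix but needs only the one lower bound; your route leans on the (standard) fractional vanishing lemma and is perhaps more familiar, and your final strong-convergence step (separate convergence of the two nonnegative summands of $T$, then weak convergence plus norm convergence in the Hilbert norm $A$) is a slightly longer but valid substitute for the paper's one-line deduction $I_{1}(v_{n}-v)\to 0$ from \eqref{eq:A_9}.
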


To prove Proposition \ref{prop:existence_of_M1}, we need the following two lemmas.

\begin{lem}[Lieb\cite{Lieb}]
  Let $(u_{n})_{n}$ be a bounded sequence in $H^{\sigma/2}(\bR)$.
  Moreover, we assume that there exists $r\in(2,\infty)$ such that $\inf_{n\in\bN} \norm{u_{n}}{L^{r}} > 0$.
  Then there exists $(z_{n})_{n}\subset\bR$ such that, by taking a subsequence, we find $v\in H^{\sigma/2}(\bR)\setminus \{ 0 \}$ which satisfies $u_{n}(\cdot + z_{n}) \rightharpoonup v$ weakly in $H^{\sigma/2}(\bR)$. \label{lem:Lieb}
\end{lem}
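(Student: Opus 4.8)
The plan is to establish this by ruling out ``vanishing'' in the sense of concentration-compactness: the hypothesis $\inf_{n}\norm{u_n}{L^r}>0$ must prevent the $L^2$-mass of $u_n$ from spreading out, forcing it to stay concentrated in some bounded window, and translating that window to the origin produces a nonzero weak limit. Throughout I use that the $H^{\sigma/2}(\bR)$ norm is invariant under the translations $u\mapsto u(\cdot+z)$, so that every translate $u_n(\cdot+z_n)$ of the given bounded sequence is again bounded in the Hilbert space $H^{\sigma/2}(\bR)$ and hence, by reflexivity, has a weakly convergent subsequence; the entire content of the lemma is therefore the \emph{choice} of $(z_n)$ that makes the weak limit nontrivial. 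I treat the relevant subcritical range $2<r<2^{\ast}_{\sigma}$, which is all of $(2,\infty)$ when $\sigma\geq1$ and is the range in which a bounded $H^{\sigma/2}$ sequence is controlled in $L^r$ when $\sigma<1$.

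For $R>0$ introduce the concentration quantity
\begin{align*}
  Q_n(R):=\sup_{z\in\bR}\int_{z-R}^{z+R}|u_n|^2\,dx.
\end{align*}
The key claim is a \emph{non-vanishing} lower bound: there exist $R>0$, $\delta>0$ and a subsequence along which $Q_n(R)\geq\delta$. I would prove this by contradiction, assuming vanishing, i.e. $Q_n(R)\to0$ as $n\to\infty$ for every fixed $R>0$, and deriving $\norm{u_n}{L^r}\to0$, which contradicts the hypothesis.

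The vanishing-implies-$L^r$-decay step is the heart of the proof and the main obstacle. I would cover $\bR$ by the unit cells $Q_k:=[k,k+1]$, $k\in\mathbb{Z}$. Writing $c_k:=\norm{u_n}{L^r(Q_k)}^2$ and using $\sum_k c_k^{r/2}\leq(\sup_k c_k)^{(r-2)/2}\sum_k c_k$ gives
\begin{align*}
  \norm{u_n}{L^r}^{r}=\sum_k\norm{u_n}{L^r(Q_k)}^{r}\leq\Big(\sup_k\norm{u_n}{L^r(Q_k)}\Big)^{r-2}\sum_k\norm{u_n}{L^r(Q_k)}^{2}.
\end{align*}
For the sum I use the local Sobolev embedding $\norm{u}{L^r(Q_k)}\leq C\norm{u}{H^{\sigma/2}(Q_k)}$ with a constant independent of $k$ (by translation invariance); summing and exploiting that the Gagliardo double-integral seminorm is subadditive over the disjoint product cells $Q_k\times Q_k\subset\bR\times\bR$, together with $\norm{D_x^{\sigma/2}u}{L^2}^2$ being comparable to that seminorm, yields $\sum_k\norm{u_n}{L^r(Q_k)}^2\leq C\norm{u_n}{H^{\sigma/2}}^2$, which is uniformly bounded. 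For the supremum I use a local fractional Gagliardo--Nirenberg inequality $\norm{u}{L^r(Q_k)}\leq C\norm{u}{L^2(Q_k)}^{1-\theta}\norm{u}{H^{\sigma/2}(Q_k)}^{\theta}$ with $\theta\in(0,1)$, so that
\begin{align*}
  \sup_k\norm{u_n}{L^r(Q_k)}\leq C\,Q_n(1)^{(1-\theta)/2}\norm{u_n}{H^{\sigma/2}}^{\theta}\longrightarrow0
\end{align*}
under vanishing. Combining, $\norm{u_n}{L^r}^{r}\leq C\,Q_n(1)^{(1-\theta)(r-2)/2}\norm{u_n}{H^{\sigma/2}}^{2+\theta(r-2)}\to0$, the desired contradiction. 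The bookkeeping of the nonlocal seminorm across cells is the one place where the fractional setting genuinely differs from the classical $H^1$ argument, and is where I expect the only real care to be needed.

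Finally, given $R,\delta$ from the claim, I would choose $z_n\in\bR$ with $\int_{z_n-R}^{z_n+R}|u_n|^2\,dx\geq\delta/2$ and set $\tilde u_n:=u_n(\cdot+z_n)$; this sequence is bounded in $H^{\sigma/2}(\bR)$, so along a subsequence $\tilde u_n\rightharpoonup v$ weakly for some $v\in H^{\sigma/2}(\bR)$. To see $v\neq0$, note that the restriction operator $H^{\sigma/2}(\bR)\to H^{\sigma/2}((-R,R))$ is bounded and hence weak--weak continuous, and the compact embedding $H^{\sigma/2}((-R,R))\hookrightarrow\hookrightarrow L^2((-R,R))$ upgrades weak convergence to strong $L^2$ convergence on $(-R,R)$, whence $\int_{-R}^{R}|v|^2\,dx=\lim_n\int_{-R}^{R}|\tilde u_n|^2\,dx\geq\delta/2>0$. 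Thus $v\in H^{\sigma/2}(\bR)\setminus\{0\}$ with $u_n(\cdot+z_n)\rightharpoonup v$, as required.
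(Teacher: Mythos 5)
Your proof is correct, but it follows a genuinely different route from the paper's. You run the Lions concentration--compactness machinery: assume vanishing of the local $L^{2}$ mass $Q_{n}(R)$, derive $\norm{u_{n}}{L^{r}}\rightarrow 0$ via the $\ell^{2}$-versus-$\ell^{r}$ summation trick together with a local fractional Gagliardo--Nirenberg interpolation on unit cells, contradict the hypothesis, then recenter where the $L^{2}$ mass concentrates and use the compact embedding into $L^{2}((-R,R))$ to see $v\neq 0$. The paper instead uses Lieb's original pigeonhole argument directly: with $M:=\sup_{n}\norm{u_{n}}{H^{\sigma/2}}$, $m:=\inf_{n}\norm{u_{n}}{L^{r}}$ and $C_{0}:=(M^{2}+1)/m^{r}$, superadditivity of the localized norms over the cells $I_{z}=(z,z+1)$ (inequality \eqref{eq:A_16}) forces, for \emph{each} $n$, the existence of a cell $I_{z_{n}}$ on which $\norm{u_{n}}{\tilde{H}^{\sigma/2}(I_{z_{n}})}^{2}<C_{0}\norm{u_{n}}{L^{r}(I_{z_{n}})}^{r}$; after translation, the local Sobolev embedding turns this into a uniform lower bound $\norm{v_{n}}{L^{r}(I_{0})}^{r-2}>C$, and Rellich compactness in $L^{r}(I_{0})$ gives $v\neq 0$. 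The paper's route is more elementary (no contradiction argument, no interpolation inequality, an explicit constant, and nontriviality measured directly in $L^{r}$), whereas yours establishes the stronger non-vanishing dichotomy for the $L^{2}$ mass and is the form that generalizes most readily; the price is the cell-wise Gagliardo--Nirenberg inequality with a $k$-uniform constant, which needs a universal extension operator and is exactly the extra input the paper avoids. Two further remarks. First, you correctly identified the crux in the fractional setting: the bookkeeping of the nonlocal seminorm across cells is handled in both arguments by the same superadditivity over the disjoint diagonal blocks $Q_{k}\times Q_{k}$, which is the paper's \eqref{eq:A_16}. Second, your restriction to $2<r<2^{\ast}_{\sigma}$ is not a defect relative to the paper: although the statement allows $r\in(2,\infty)$, the paper's own proof also uses the local embedding $\tilde{H}^{\sigma/2}(I_{0})\hookrightarrow L^{r}(I_{0})$ and Rellich compactness in $L^{r}(I_{0})$, which for $0<\sigma<1$ require the same subcritical range; in the application one has $r=p+1<2^{\ast}_{\sigma}$, so both proofs cover what is actually used.
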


Originally, Lieb\cite{Lieb} showed the same statement for a sequence in $H^{1}(\bR)$. 
For the proof, see Appendix \ref{appendix:proof_of_Lieb}.

\begin{lem}[Brezis--Lieb\cite{Brezis-Lieb}]
  Let $r\in(1,\infty)$ and $(u_{n})_{n}$ be a bounded sequence in $L^{r}(\bR)$.
  Moreover, we assume that there exists a measurable function $u\colon \bR \rightarrow \bR$ such that $u_{n} \rightarrow u$ a.e.\ in $\bR$.
  Then $u\in L^{r}(\bR)$ and we have
  \begin{align}
    \lim_{n\rightarrow\infty} \left( \int_{\bR}|u_{n}|^{r} \, dx - \int_{\bR}|u_{n}-u|^{r} \, dx \right) = \int_{\bR} |u|^{r} \, dx.
  \end{align}
  \label{lem:Brezis-Lieb}
\end{lem}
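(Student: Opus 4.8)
The plan is to reduce the statement to showing
\[
  \int_{\bR}\Bigl|\,|u_{n}|^{r}-|u_{n}-u|^{r}-|u|^{r}\,\Bigr|\,dx \longrightarrow 0,
\]
since then the triangle inequality applied to the integrand immediately yields the claimed limit. First I would record that $u\in L^{r}(\bR)$: as $u_{n}\to u$ a.e., Fatou's lemma gives $\int_{\bR}|u|^{r}\,dx\le\liminf_{n\to\infty}\norm{u_{n}}{L^{r}}^{r}<\infty$ by the assumed $L^{r}$-boundedness of $(u_{n})_{n}$.

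The core of the proof is the elementary inequality that for each $\varepsilon>0$ there is $C_{\varepsilon}>0$ with
\[
  \Bigl|\,|a+b|^{r}-|a|^{r}\,\Bigr|\le \varepsilon|a|^{r}+C_{\varepsilon}|b|^{r}\qquad(a,b\in\bR).
\]
I would establish this by separating the cases $|b|\le\delta|a|$ and $|b|>\delta|a|$. In the first regime the mean value theorem bounds the left side by $r|b|(|a|+|b|)^{r-1}\le C\delta\,|a|^{r}$, which is at most $\varepsilon|a|^{r}$ once $\delta$ is chosen small depending on $\varepsilon$; in the second regime $|a|<\delta^{-1}|b|$, so both $|a+b|^{r}$ and $|a|^{r}$ are controlled by a constant multiple of $|b|^{r}$, which is absorbed into $C_{\varepsilon}|b|^{r}$.

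Applying this pointwise with $a=u_{n}-u$ and $b=u$ (so that $a+b=u_{n}$) gives
\[
  \Bigl|\,|u_{n}|^{r}-|u_{n}-u|^{r}-|u|^{r}\,\Bigr|\le\varepsilon|u_{n}-u|^{r}+(C_{\varepsilon}+1)|u|^{r}.
\]
Then I would introduce the truncated functions
\[
  W_{n}^{\varepsilon}:=\Bigl(\Bigl|\,|u_{n}|^{r}-|u_{n}-u|^{r}-|u|^{r}\,\Bigr|-\varepsilon|u_{n}-u|^{r}\Bigr)_{+},
\]
which satisfy $0\le W_{n}^{\varepsilon}\le(C_{\varepsilon}+1)|u|^{r}$. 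Since $u\in L^{r}(\bR)$, the right-hand side is a fixed integrable majorant independent of $n$, and $W_{n}^{\varepsilon}\to0$ a.e.\ because $u_{n}-u\to0$ a.e.\ forces the inner absolute value to vanish in the limit. By the dominated convergence theorem, $\int_{\bR}W_{n}^{\varepsilon}\,dx\to0$.

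Finally, from the pointwise bound $\bigl|\,|u_{n}|^{r}-|u_{n}-u|^{r}-|u|^{r}\,\bigr|\le W_{n}^{\varepsilon}+\varepsilon|u_{n}-u|^{r}$ together with the uniform bound $M:=\sup_{n}\norm{u_{n}-u}{L^{r}}^{r}<\infty$ (finite because $(u_{n})_{n}$ is bounded and $u\in L^{r}(\bR)$), I would take $\limsup_{n\to\infty}$ to obtain $\limsup_{n\to\infty}\int_{\bR}\bigl|\,|u_{n}|^{r}-|u_{n}-u|^{r}-|u|^{r}\,\bigr|\,dx\le\varepsilon M$, and then let $\varepsilon\downarrow0$. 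The main obstacle is the bookkeeping around the parameter $\varepsilon$: one must fix a dominating function valid for all $n$ before invoking dominated convergence, and only afterwards remove $\varepsilon$ using the uniform $L^{r}$ bound; the pointwise inequality above is precisely what makes this two-stage limiting procedure work.
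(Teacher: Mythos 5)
Your proof is correct and complete: the elementary inequality $\bigl|\,|a+b|^{r}-|a|^{r}\,\bigr|\le\varepsilon|a|^{r}+C_{\varepsilon}|b|^{r}$, the truncation $W_{n}^{\varepsilon}$ dominated by $(C_{\varepsilon}+1)|u|^{r}$, dominated convergence, and the removal of $\varepsilon$ via the uniform bound $\sup_{n}\norm{u_{n}-u}{L^{r}}^{r}<\infty$ constitute exactly the original Brezis--Lieb argument, and your case-splitting via the mean value theorem is valid since $r>1$. The paper itself states this lemma without proof, citing \cite{Brezis-Lieb}, so your argument coincides with the approach of the cited source rather than with any in-text proof.
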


Before proceeding with the proof of Proposition \ref{prop:existence_of_M1}, we define the functional $I_{1}\colon H^{\sigma/2}(\bR)\rightarrow \bR$ as
\begin{align}
  I_{1}(u) := \left( \frac{1}{2} - \frac{1}{q+1} \right) \norm{u}{H^{\sigma/2}_{c}}^{2} + \left( \frac{1}{p+1} - \frac{1}{q+1} \right) \norm{u}{L^{p+1}}^{p+1}, \label{eq:A_3}
\end{align}
where $\norm{u}{H^{\sigma/2}_{c}}^{2}:=\norm{D_{x}^{\sigma/2}u}{L^{2}}^{2} + c\norm{u}{L^{2}}^{2}$ for $u\in H^{\sigma/2}(\bR)$ and $c$ is the positive constant included in \eqref{eq:SPj}.
Using \eqref{eq:A_3}, we can write
\begin{align}
  S_{1}(u) = \frac{1}{q+1} K_{1}(u) + I_{1}(u), \label{eq:A_4}
\end{align}
which implies that
\begin{align}
  d_{1} = \inf_{v\in\mathcal{K}_{1}}S_{1}(v) = \inf_{v\in\mathcal{K}_{1}}I_{1}(v). \label{eq:A_5}
\end{align}
Here we obtain the following two lemmas.

\begin{lem}
  Let $v\in H^{\sigma/2}(\bR)$ satisfy $K_{1}(v)<0$. Then $I_{1}(v) > d_{1}$ holds. \label{lem:negativeness_of_I1}
\end{lem}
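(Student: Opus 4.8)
The plan is to project $v$ onto the set $\mathcal{K}_{1}$ by a rescaling $\lambda v$, and then exploit the fact that $I_{1}$ is strictly increasing along rays emanating from the origin. Concretely, I would first find $\lambda_{0}\in(0,1)$ with $\lambda_{0}v\in\mathcal{K}_{1}$. Since $d_{1}=\inf_{w\in\mathcal{K}_{1}}I_{1}(w)$ by \eqref{eq:A_5}, this immediately gives $d_{1}\le I_{1}(\lambda_{0}v)$, and it then suffices to check that $I_{1}(\lambda_{0}v)<I_{1}(v)$, which follows from $\lambda_{0}<1$ together with the monotonicity of $I_{1}$ along the ray.

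To locate $\lambda_{0}$, I would examine the map $\lambda\mapsto K_{1}(\lambda v)$, as was already done for $\phi\in\mathcal{M}_{1}$ in \eqref{function:K1}:
\begin{align}
  K_{1}(\lambda v) = \lambda^{2}\norm{v}{H^{\sigma/2}_{c}}^{2} + \lambda^{p+1}\norm{v}{L^{p+1}}^{p+1} - \lambda^{q+1}\norm{v}{L^{q+1}}^{q+1}.
\end{align}
First note that $v\neq 0$, since $K_{1}(0)=0$ while $K_{1}(v)<0$; hence $\norm{v}{H^{\sigma/2}_{c}}^{2}>0$. Because $2<p+1<q+1$, the quadratic term dominates as $\lambda\to 0^{+}$, so $K_{1}(\lambda v)>0$ for all sufficiently small $\lambda>0$. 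Combining this with the hypothesis $K_{1}(v)<0$ (the value at $\lambda=1$) and the continuity of $\lambda\mapsto K_{1}(\lambda v)$, the intermediate value theorem yields some $\lambda_{0}\in(0,1)$ with $K_{1}(\lambda_{0}v)=0$. Since $\lambda_{0}v\neq 0$, this means $\lambda_{0}v\in\mathcal{K}_{1}$, so $d_{1}\le I_{1}(\lambda_{0}v)$.

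It remains to compare $I_{1}(\lambda_{0}v)$ with $I_{1}(v)$. From \eqref{eq:A_3},
\begin{align}
  I_{1}(\lambda v) = \lambda^{2}\left( \frac{1}{2} - \frac{1}{q+1} \right)\norm{v}{H^{\sigma/2}_{c}}^{2} + \lambda^{p+1}\left( \frac{1}{p+1} - \frac{1}{q+1} \right)\norm{v}{L^{p+1}}^{p+1}.
\end{align}
Both coefficients are strictly positive, the first because $q+1>2$ and the second because $p<q$, and $\norm{v}{H^{\sigma/2}_{c}}^{2}>0$. Thus $\lambda\mapsto I_{1}(\lambda v)$ is strictly increasing on $(0,\infty)$, so $\lambda_{0}<1$ forces $I_{1}(\lambda_{0}v)<I_{1}(v)$. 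Chaining the two inequalities gives $d_{1}\le I_{1}(\lambda_{0}v)<I_{1}(v)$, which is the desired conclusion.

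The argument is essentially routine once the rescaling idea is in place; there is no serious obstacle. The only points requiring care are the strict ordering of exponents $2<p+1<q+1$ (used both to guarantee positivity of $K_{1}(\lambda v)$ near $\lambda=0$ and to make the coefficients of $I_{1}(\lambda v)$ positive) and the strictness of the final inequality, which hinges on the \emph{strict} sign $K_{1}(v)<0$ producing $\lambda_{0}<1$ strictly, combined with the \emph{strict} monotonicity of $I_{1}$ along the ray.
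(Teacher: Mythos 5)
Your proof is correct and follows essentially the same route as the paper: rescale $v$ to find $\lambda_{0}\in(0,1)$ with $\lambda_{0}v\in\mathcal{K}_{1}$ (the paper's ``shape of the graph'' argument, which you make precise via the intermediate value theorem), then conclude $d_{1}\leq I_{1}(\lambda_{0}v)<I_{1}(v)$ using \eqref{eq:A_5} and the strict monotonicity of $\lambda\mapsto I_{1}(\lambda v)$. You merely spell out the details (positivity of $K_{1}(\lambda v)$ near $\lambda=0$, strictness of the exponent ordering) that the paper leaves implicit.
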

\begin{proof}
  Considering the shape of the graph of $ (0,\infty) \ni \lambda \mapsto K_{1}(\lambda v)$, there exists $\lambda_{0}\in(0,1)$ such that $K_{1}(\lambda_{0} v)=0$, which means $\lambda_{0} v \in \mathcal{K}_{1}$.
  Finally, by the definition of $d_{1}$, we obtain $d_{1} \leq I_{1}(\lambda_{0}v) < I_{1}(v)$.
\end{proof}

\begin{lem}
  $d_{1}>0$. \label{lem:positiveness_of_d1}
\end{lem}
\begin{proof}
  By \eqref{eq:A_5}, we shall show that there exists $C_{0}>0$ such that $I_{1}(v) \geq C$ holds for any $v\in\mathcal{K}_{1}$.

  Let $v\in\mathcal{K}_{1}$.
  By the Sobolev embedding, we have
  \begin{align}
    0 = K_{1}(v) &= \norm{v}{H^{\sigma/2}_{c}}^{2} + \norm{v}{L^{p+1}}^{p+1} - \norm{v}{L^{q+1}}^{q+1} \\
    &\geq \norm{v}{H^{\sigma/2}_{c}}^{2}\left( 1 - C_{1}\norm{v}{H^{\sigma/2}_{c}}^{p-1} - C_{2} \norm{v}{H^{\sigma/2}_{c}}^{q-1} \right),
  \end{align}
  which implies $1 \leq C_{1}\norm{v}{H^{\sigma/2}_{c}}^{p-1}$ or $ 1 \leq C_{2}\norm{v}{H^{\sigma/2}_{c}}^{q-1} $.
  Here we put
  \begin{align}
    C_{0} := \left( \frac{1}{2} - \frac{1}{q+1} \right) \min \left\{ C_{1}^{-\frac{2}{p-1}}, \ C_{2}^{-\frac{2}{q-1}} \right\},
  \end{align}
  and then we obtain
  \begin{align}
    I_{1}(v) \geq \left( \frac{1}{2} - \frac{1}{q+1} \right) \norm{v}{H^{\sigma/2}}^{2} \geq C_{0},
  \end{align}
  which is the desired inequality.
\end{proof}

Now, we prove Proposition \ref{prop:existence_of_M1}.
\begin{proof}[Proof of Lemma \ref{prop:existence_of_M1}]
  First, we remark that, by using the Nehari functional $K_{1}$, we can write $S_{1}$ as
  \begin{align}
    S_{1}(u) = \frac{1}{2} K_{1}(u) - \left( \frac{1}{2} - \frac{1}{p+1} \right) \norm{u}{L^{p+1}}^{p+1} + \left( \frac{1}{2} - \frac{1}{q+1} \right) \norm{u}{L^{q+1}}^{q+1}. \label{eq:A_6}
  \end{align}
  Let $(u_{n})_{n}$ be a sequence in $H^{\sigma/2}(\bR)$ which satisfies $S_{1}(u_{n}) \rightarrow d_{1}$ and $K_{1}(u_{n}) \rightarrow 0$.
  From \eqref{eq:A_4}, \eqref{eq:A_6} and the assumption, we obtain
  \begin{align}
    & I_{1}(u_{n}) \rightarrow d_{1}, \label{eq:A_7} \\
    & -\left( \frac{1}{2} - \frac{1}{p+1} \right)\norm{u_{n}}{L^{p+1}}^{p+1} + \left( \frac{1}{2} - \frac{1}{q+1} \right)\norm{u_{n}}{L^{q+1}}^{q+1} \rightarrow d_{1}. \label{eq:A_8}
  \end{align}
  Then \eqref{eq:A_7} implies that $(u_{n})_{n}$ is bounded in $H^{\sigma/2}(\bR)$.
  Furthermore, by \eqref{eq:A_8}, we can see that $\inf_{n\in\bN}\norm{u_{n}}{L^{p+1}}>0$.
  Indeed, if $\inf_{n\in\bN}\norm{u_{n}}{L^{p+1}}=0$, it contradicts to the positivity of $d_{1}$.
  Therefore, we can apply Lemma \ref{lem:Lieb} to $(u_{n})_{n}$ and then there exists $(z_{n})_{n}\subset \bR$ and $v\in H^{\sigma/2}(\bR)\setminus \{0\}$ which satisfies $u_{n}(\cdot + z_{n}) \rightharpoonup v$ weakly in $H^{\sigma/2}(\bR)$ by taking a subsequence.
  Here we denote $v_{n}:=u_{n}(\cdot + z_{n})$.
  By the Rellich compact embedding $H^{\sigma/2}(\bR) \hookrightarrow L^{r}_{\textrm{loc}}(\bR)$ for $r\in[2,2^{\ast}_{\sigma})$, we may assume that $v_{n} \rightarrow v$ a.e.\ in $\bR$.
  Then by using Lemma \ref{lem:Brezis-Lieb}, we have the following convergences:
  \begin{align}
    & I_{1}(v_{n}) - I_{1}(v_{n}-v) \rightarrow I_{1}(v), \label{eq:A_9} \\
    & K_{1}(v_{n}) - K_{1}(v_{n}-v) \rightarrow K_{1}(v). \label{eq:A_10}
  \end{align}
  From \eqref{eq:A_9}, we can see that
  \begin{align}
    \lim_{n\rightarrow\infty} I_{1}(v_{n}-v) &= \lim_{n\rightarrow\infty} I_{1}(v_{n}) - I_{1}(v) \\
    &< \lim_{n\rightarrow\infty} I_{c}(v_{n}) = \lim_{n\rightarrow\infty} I_{c}(u_{n}) = d_{1}, \label{eq:A_11}
  \end{align}
  which implies that $I_{1}(v_{n}-v) \leq d_{1}$ holds for large $n\in\bN$ and then we have $K_{1}(v_{n}-v) \geq 0$ by Lemma \ref{lem:negativeness_of_I1}.
  Moreover, from \eqref{eq:A_10}, we obtain
  \begin{align}
    K_{1}(v) = \lim_{n\rightarrow\infty}K_{1}(v_{n}) - \lim_{n\rightarrow\infty}K_{1}(v_{n}-v) \leq 0. \label{eq:A_12}
  \end{align}
  Therefore, applying Lemma \ref{lem:negativeness_of_I1} again, we have
  \begin{align}
    d_{1} \leq I_{1}(v) \leq \liminf_{n\rightarrow\infty}I_{1}(v_{n}) = d_{1},
  \end{align}
  that is,
  \begin{align}
    I_{1}(v) = d_{1}. \label{eq:A_13}
  \end{align}
  Using Lemma \ref{lem:negativeness_of_I1} again, we see that $K_{1}(v) \geq 0$.
  Combining this inequality and \eqref{eq:A_12} gives
  \begin{align}
    K_{1}(v)=0. \label{eq:A_15}
  \end{align}
  Therefore, $v\in\mathcal{M}_{1}$ follows from \eqref{eq:A_5}, \eqref{eq:A_13} and \eqref{eq:A_15}.

  Finally, we can obtain the strong convergence by \eqref{eq:A_9}.
  This completes the proof.
\end{proof}

\begin{lem}
  Let $\phi\in\mathcal{G}_{1}$.
  Then we obtain $\phi^{\ast}\in\mathcal{G}_{1}$. \label{prop:even_ground_state_of_SP1_2}
\end{lem}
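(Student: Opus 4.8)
The plan is to exploit the fact that, among the terms making up $S_{1}$, $K_{1}$, and $I_{1}$, only the kinetic term $\norm{D_{x}^{\sigma/2}\cdot}{L^{2}}^{2}$ is altered by the rearrangement $\phi \mapsto \phi^{\ast}$, and that it can only decrease. First I would record the behaviour of the functionals under rearrangement. Since $\phi^{\ast}=|\phi|^{\ast}$ is equimeasurable with $|\phi|$, every Lebesgue norm is preserved, so $\norm{\phi^{\ast}}{L^{r}}=\norm{\phi}{L^{r}}$ for all admissible $r$, in particular for $r=2,\,p+1,\,q+1$. Combining \eqref{eq:A_101} and \eqref{eq:A_102} gives $\norm{D_{x}^{\sigma/2}\phi^{\ast}}{L^{2}} \leq \norm{D_{x}^{\sigma/2}\phi}{L^{2}}$, hence $\norm{\phi^{\ast}}{H^{\sigma/2}_{c}} \leq \norm{\phi}{H^{\sigma/2}_{c}}$. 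Because the coefficient of the $H^{\sigma/2}_{c}$ term is nonnegative in both $I_{1}$ (see \eqref{eq:A_3}) and $K_{1}$, while the $L^{p+1}$, $L^{q+1}$, and $L^{2}$ contributions are left unchanged, this yields at once the two monotonicity relations $I_{1}(\phi^{\ast}) \leq I_{1}(\phi)$ and $K_{1}(\phi^{\ast}) \leq K_{1}(\phi)$. I would also note $\phi^{\ast}\neq 0$, since $\norm{\phi^{\ast}}{L^{2}}=\norm{\phi}{L^{2}}>0$, so that $\phi^{\ast}\in H^{\sigma/2}(\bR)\setminus\{0\}$.

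Next I would bring in the hypothesis $\phi\in\mathcal{G}_{1}$. By Lemma \ref{lem:relation_G_and_M} we have $\mathcal{G}_{1}=\mathcal{M}_{1}$, so $K_{1}(\phi)=0$ and, using \eqref{eq:A_4} together with $K_1(\phi)=0$, also $I_{1}(\phi)=S_{1}(\phi)=d_{1}$. The crux of the argument is then a dichotomy on the sign of $K_{1}(\phi^{\ast})$. From the monotonicity above, $K_{1}(\phi^{\ast}) \leq K_{1}(\phi)=0$. Suppose, toward a contradiction, that $K_{1}(\phi^{\ast})<0$; then Lemma \ref{lem:negativeness_of_I1} applies and forces $I_{1}(\phi^{\ast})>d_{1}$, contradicting $I_{1}(\phi^{\ast}) \leq I_{1}(\phi)=d_{1}$. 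Hence $K_{1}(\phi^{\ast})=0$, i.e.\ $\phi^{\ast}\in\mathcal{K}_{1}$.

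It remains to conclude. Since $\phi^{\ast}\in\mathcal{K}_{1}$, the characterization \eqref{eq:A_5} of $d_{1}$ gives $I_{1}(\phi^{\ast}) \geq d_{1}$; together with $I_{1}(\phi^{\ast}) \leq I_{1}(\phi)=d_{1}$ this forces $I_{1}(\phi^{\ast})=d_{1}$. Because $K_{1}(\phi^{\ast})=0$, the splitting \eqref{eq:A_4} yields $S_{1}(\phi^{\ast})=I_{1}(\phi^{\ast})=d_{1}$, so $\phi^{\ast}\in\mathcal{M}_{1}=\mathcal{G}_{1}$, as desired.

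The genuinely delicate point — and the only place where the competing double-power structure matters — is excluding the possibility $K_{1}(\phi^{\ast})<0$: rearrangement lowers the action but need not \emph{a priori} keep the candidate on the Nehari-type constraint $\mathcal{K}_{1}$, and it is precisely Lemma \ref{lem:negativeness_of_I1} (the graph analysis of $\lambda\mapsto K_{1}(\lambda\phi^{\ast})$) that rules this out. Everything else reduces to the invariance of the $L^{r}$ norms under symmetric decreasing rearrangement together with the single kinetic inequality \eqref{eq:A_101}--\eqref{eq:A_102}.
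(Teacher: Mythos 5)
Your proposal is correct and follows essentially the same route as the paper's proof: both use the preservation of the $L^{r}$ norms and the kinetic inequalities \eqref{eq:A_101}--\eqref{eq:A_102} to obtain $K_{1}(\phi^{\ast})\leq 0$ and $I_{1}(\phi^{\ast})\leq d_{1}$, then invoke Lemma \ref{lem:negativeness_of_I1} to exclude $K_{1}(\phi^{\ast})<0$, and finally conclude $\phi^{\ast}\in\mathcal{M}_{1}=\mathcal{G}_{1}$ via Lemma \ref{lem:relation_G_and_M}. The only cosmetic difference is that you recover $S_{1}(\phi^{\ast})=d_{1}$ from the splitting \eqref{eq:A_4}, whereas the paper records the inequality $S_{1}(\phi^{\ast})\leq S_{1}(\phi)=d_{1}$ directly; the logic is identical.
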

\begin{proof}
  Thanks to Lemma \ref{lem:relation_G_and_M}, it suffices to show $\phi^{\ast}\in\mathcal{M}_{1}$.
  From $\phi\in\mathcal{G}_{1} = \mathcal{M}_{1}$, we have $S_{1}(\phi) = d_{1}, \ K_{1}(\phi) = 0$.
  Using the inequalities \eqref{eq:A_101} and \eqref{eq:A_102} yields that
  \begin{align}
    & S_{1}(\phi^{\ast}) \leq S_{1}(\phi) = d_{1}, \label{eq:A_21} \\
    & K_{1}(\phi^{\ast}) \leq 0, \label{eq:A_22} \\
    & I_{1}(\phi^{\ast}) \leq I_{1}(\phi) = d_{1}. \label{eq:A_23}
  \end{align}
  Moreover, we can see that $K_{1}(\phi^{\ast}) \geq 0$ from \eqref{eq:A_23} and Lemma \ref{lem:negativeness_of_I1}.
  Then combining this and \eqref{eq:A_22}, we obtain $K_{1}(\phi^{\ast}) = 0$.
  Therefore, $\phi^{\ast}\in\mathcal{M}_{1}$ follows from this identity and \eqref{eq:A_21}.
  This concludes the proof.
\end{proof}

Finally, Proposition \ref{prop:existense_of_gs_12} follows from Lemmas \ref{lem:relation_G_and_M}, \ref{prop:existence_of_M1}, and \ref{prop:even_ground_state_of_SP1_2}.

\begin{rem}
  \begin{enumerate}
    \item This method is also valid for the case that $\sigma = 2$.
    \item We can also characterize the ground state of \eqref{eq:SP0} with $f(s)=|s|^{p-1}s \ (1 < p < 2^{\ast}_{\sigma}-1)$ with the same method as Proposition \ref{prop:existense_of_gs_12}.
    \item Here we define $f_{4}:=-|s|^{p-1}s -|s|^{q-1}s$ and consider (SP4).
    The Nehari functional $K_{4}$ corresponding to (SP4) is given as \label{rem:SP4}
    \begin{align}
      K_{4}(u) = \dualC{S_{4}'(u)}{u} = \norm{D_{x}^{\sigma/2}u}{L^{2}}^{2} + c\norm{u}{L^{2}}^{2} + \norm{u}{L^{p+1}}^{p+1} + \norm{u}{L^{q+1}}^{q+1} \geq 0.
    \end{align}
    Here we let $v\in H^{\sigma/2}(\bR)$ be a soluion to (SP4).
    Then we obtain $v=0$ by the nonnegativity of the value of $K_{4}$.
  \end{enumerate} \label{rem:end_of_1_2}
\end{rem}

\subsection{Case $j=3$} \label{subsection:Pohizaev}
In this subsection, we prove the following proposition.
\begin{prop}
  Let $0 < \sigma \leq 1$.
  If $c\in(0,c_{0})$, then there exists $\phi\in\mathcal{G}_{3}$ which is nonpositive, even, and decreasing in $|x|$. \label{prop:existence_of_ground_state_of_SP3}
\end{prop}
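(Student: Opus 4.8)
The plan is to carry out the variational argument of \S\ref{subsection:Nehari} with the \emph{dilation} scaling $u^{\lambda}(x):=u(x/\lambda)$, $\lambda>0$, in place of the amplitude scaling $\lambda\mapsto K_{3}(\lambda u)$, whose lack of a clean zero was the obstruction recorded in Remark \ref{rem:difficulty}. Writing $T(u):=\tfrac{1}{2}\norm{D_{x}^{\sigma/2}u}{L^{2}}^{2}$ and $V(u):=\int_{\bR}F_{3}(u)\,dx-\tfrac{c}{2}\norm{u}{L^{2}}^{2}$, a direct computation gives
\[
  T(u^{\lambda})=\lambda^{1-\sigma}T(u),\qquad V(u^{\lambda})=\lambda\,V(u),\qquad S_{3}(u^{\lambda})=\lambda^{1-\sigma}T(u)-\lambda\,V(u).
\]
I introduce the Pohozaev functional $P_{3}(u):=\partial_{\lambda}S_{3}(u^{\lambda})\big|_{\lambda=1}=(1-\sigma)T(u)-V(u)$ and the manifold $\mathcal{P}_{3}:=\set{u\in H^{\sigma/2}(\bR)\setminus\{0\}}{P_{3}(u)=0}$. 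Every solution of \eqref{eq:SP3} satisfies the Pohozaev identity $P_{3}=0$ (which I would justify using the $H^{\sigma+1}$-regularity established in \S\ref{section:properties}), so $\mathcal{N}_{3}\subset\mathcal{P}_{3}$, and on $\mathcal{P}_{3}$ one has the clean identity $S_{3}=\sigma T$. Setting $d_{3}:=\inf_{\mathcal{P}_{3}}S_{3}$, the goal is to show $d_{3}$ is attained by a symmetric decreasing function solving \eqref{eq:SP3}.

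First I would locate the role of the hypothesis $c\in(0,c_{0})$. The set $\{V>0\}$, and hence $\mathcal{P}_{3}$, is nonempty if and only if $\max_{s>0}\bigl(F_{3}(s)-\tfrac{c}{2}s^{2}\bigr)>0$; maximizing $h(s):=\tfrac{1}{p+1}s^{p-1}-\tfrac{1}{q+1}s^{q-1}$ over $s>0$ at $s_{\ast}=[(p-1)(q+1)/((p+1)(q-1))]^{1/(q-p)}$ and evaluating shows this threshold is exactly $\tfrac{c_{0}}{2}$, with $c_{0}$ as in Theorem \ref{Thm:SP3}. Thus $c<c_{0}$ is precisely the Berestycki--Lions condition guaranteeing $\mathcal{P}_{3}\neq\emptyset$. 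Moreover, for $0<\sigma<1$ and any $u$ with $V(u)>0$, the map $\lambda\mapsto S_{3}(u^{\lambda})$ has a \emph{unique} interior maximizer, namely the dilation that places $u$ on $\mathcal{P}_{3}$; this is the monotonicity replacing the graph analysis behind \eqref{function:K1} that failed for $K_{3}$.

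The compactness argument then mirrors Lemmas \ref{lem:negativeness_of_I1}--\ref{prop:existence_of_M1} under the dictionary (amplitude $\rightsquigarrow$ dilation, $K_{1}\rightsquigarrow P_{3}$, $I_{1}\rightsquigarrow \sigma T$). One checks the analogue of Lemma \ref{lem:negativeness_of_I1}, namely $P_{3}(v)\le 0,\ v\ne0\Rightarrow \sigma T(v)\ge d_{3}$ (strict if $P_{3}(v)<0$), by dilating $v$ onto $\mathcal{P}_{3}$. Given a minimizing sequence in $\mathcal{P}_{3}$, I replace each term by its symmetric decreasing rearrangement — which preserves $V$, as $F_{3}$ is even, and does not increase $T$ by \eqref{eq:A_101}--\eqref{eq:A_102} — and then dilate it back onto $\mathcal{P}_{3}$, obtaining a nonnegative, even, decreasing minimizing sequence. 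Lemma \ref{lem:Lieb} furnishes a nonzero weak limit $v$ and Lemma \ref{lem:Brezis-Lieb} (together with weak lower semicontinuity of $T$) lets me split $T$, $V$ and $P_{3}$ along $v_{n}=v+(v_{n}-v)$, concluding exactly as in Lemma \ref{prop:existence_of_M1} that $v\in\mathcal{P}_{3}$, $S_{3}(v)=d_{3}$, with strong convergence.

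It remains to show $v$ solves \eqref{eq:SP3} and is a ground state. This is the Pohozaev analogue of Lemma \ref{lem:relation_G_and_M}: with $\Lambda v:=\partial_{\lambda}v^{\lambda}|_{\lambda=1}$ one computes $\dualC{P_{3}'(v)}{\Lambda v}=-\sigma(1-\sigma)T(v)\ne0$ for $0<\sigma<1$, hence $P_{3}'(v)\ne0$ and a Lagrange multiplier gives $S_{3}'(v)=\mu P_{3}'(v)$; pairing with $\Lambda v$ and using $\dualC{S_{3}'(v)}{\Lambda v}=P_{3}(v)=0$ forces $\mu=0$, so $v\in\mathcal{N}_{3}$. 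Since $\mathcal{N}_{3}\subset\mathcal{P}_{3}$, minimality on $\mathcal{P}_{3}$ yields $S_{3}(v)\le S_{3}(w)$ for all $w\in\mathcal{N}_{3}$, i.e.\ $v\in\mathcal{G}_{3}$; it inherits nonnegativity, evenness and monotonicity from the rearranged sequence (and $-v$ realizes the nonpositive ground state in the statement). The main obstacle is twofold. The first is the compactness step itself: ruling out vanishing requires an a priori $H^{\sigma/2}$-bound whose $L^{2}$-part is borderline, since the Gagliardo--Nirenberg interpolation of $\norm{u}{L^{p+1}}^{p+1}$ between $L^{2}$ and $L^{q+1}$ is $L^{2}$-critical; this is exactly where the threshold $c<c_{0}$ and the coercive defocusing term $\tfrac{1}{q+1}\norm{u}{L^{q+1}}^{q+1}$ are decisive. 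The second, and more delicate, is the borderline case $\sigma=1$ included in the statement: there $\lambda^{1-\sigma}\equiv1$, so $\lambda\mapsto S_{3}(u^{\lambda})$ is affine, the interior maximizer disappears, and $\dualC{P_{3}'(v)}{\Lambda v}=-\sigma(1-\sigma)T(v)$ degenerates to $0$, so the dilation alone neither fixes the scale nor forces $\mu=0$; I expect this case to require a separate device, such as the combined amplitude--dilation (Nehari--Pohozaev) manifold $\{K_{3}=P_{3}=0\}$ or an approximation letting $\sigma\uparrow1$.
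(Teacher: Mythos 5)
Your route is genuinely different from the paper's, and for $0<\sigma<1$ it is essentially workable: the paper does \emph{not} minimize $S_{3}$ on the Pohozaev manifold $\set{v}{(1-\sigma)T(v)=V(v)}$; instead it minimizes only the kinetic part $J_{3}(u)=\frac{\sigma}{2}\norm{D_{x}^{\sigma/2}u}{L^{2}}^{2}$ subject to a \emph{derivative-free} constraint ($P_{c}(v)=0$ for $\sigma=1$, $P_{c}(v)=1$ for $0<\sigma<1$), gets compactness from rearrangement plus the Strauss-type Lemmas \ref{lem:Radial} and \ref{lem:Strauss} rather than Lieb/Brezis--Lieb, and then \emph{keeps} the Lagrange multiplier $\mu$: it proves $\mu>0$ by an amplitude-scaling graph argument and absorbs $\mu$ by the spatial dilation $\phi(x):=v_{0}(\mu^{1/\sigma}x)$, finally comparing $S_{3}(\phi)$ with $S_{3}(u)$ for $u\in\mathcal{N}_{3}$ through Lemma \ref{lem:Pohozaev}. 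This absorption is possible precisely because the paper's constraint contains no derivative term, so $J_{3}'$ and $P_{c}'$ scale with different powers under dilation. Your identification of the threshold ($\max_{s>0}(F_{3}(s)-\tfrac{c}{2}s^{2})>0$ iff $c<c_{0}$) agrees with Lemma \ref{lem:nonempty_of_constraint}, and your rearrange-then-dilate scheme plus the Brezis--Lieb splitting is a legitimate substitute for the paper's Strauss compactness when $0<\sigma<1$.

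The genuine gap is the case $\sigma=1$, which the statement includes and which you explicitly leave open. This is not a removable corner case of your setup: at $\sigma=1$ your manifold degenerates to $\set{v}{V(v)=0}$ and $S_{3}=T$ on it, i.e.\ \emph{exactly} the paper's minimization problem, but your only mechanism for turning the minimizer into a critical point (forcing $\mu=0$ by pairing with the dilation generator) collapses, since $\dualC{P_{3}'(v)}{\Lambda v}=-\sigma(1-\sigma)T(v)=0$. The paper's device fills precisely this hole: with its derivative-free constraint one accepts $\mu\neq 0$, shows $\mu>0$ (the graph argument for $\lambda\mapsto P_{c}(\lambda v_{0})$, which needs no factor $1-\sigma$), and rescales; for $\sigma=1$ the rescaling leaves both $\norm{D_{x}^{1/2}\cdot}{L^{2}}^{2}$ and the constraint value unchanged, so the rescaled function is still a minimizer and solves \eqref{eq:SP3}. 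Without supplying this (or the Nehari--Pohozaev device you gesture at), your proof covers only $0<\sigma<1$. A secondary, fixable issue for $0<\sigma<1$: the pairing $\dualC{P_{3}'(v)}{\Lambda v}$ with $\Lambda v=-xv'$ is formal, since $\Lambda v$ need not belong to $H^{\sigma/2}(\bR)$ for a constrained minimizer $v$; the rigorous version is to note that $S_{3}'(v)=\mu P_{3}'(v)$ is again an equation of the form $A\,D_{x}^{\sigma}v+B\,(cv-f_{3}(v))=0$ with $A=1-\mu(1-\sigma)$, $B=1+\mu$, apply the Pohozaev identity (Lemma \ref{lem:Pohozaev}, after the regularity theory of \S\ref{section:properties}) to \emph{that} equation, and combine with $P_{3}(v)=0$ to conclude $\mu(2-\sigma)T(v)=0$, hence $\mu=0$. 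As written, your multiplier step is a heuristic rather than a proof.
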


As mentioned in Remark \ref{rem:difficulty}, it is difficult to find any solutions to \eqref{eq:SP3} with the same method as in Proposition \ref{prop:existense_of_gs_12}.
So, we prove the existence of ground states of \eqref{eq:SP3} with the Pohozaev identity, which is the method introduced by Berestycki--Lions \cite{Berestycki-Lions-1} or Berestycki--Gallou\"{e}t--Kavian \cite{BGK}.

\begin{lem}[The Pohozaev identity]
  The following identity holds for any solution $u\in H^{\sigma/2}(\bR)$ to \eqref{eq:SP3}:
  \begin{align}
    \frac{1-\sigma}{2}\norm{D_{x}^{\sigma/2}u}{L^{2}}^{2} + \frac{c}{2}\norm{u}{L^{2}}^{2} - \frac{1}{p+1}\norm{u}{L^{p+1}}^{p+1} + \frac{1}{q+1}\norm{u}{L^{q+1}}^{q+1} = 0.
  \end{align} \label{lem:Pohozaev}
\end{lem}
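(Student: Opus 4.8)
The plan is to prove the identity by the scaling (Pohozaev) argument of Berestycki--Lions \cite{Berestycki-Lions-1} and BGK \cite{BGK}, adapted to the nonlocal operator $D_{x}^{\sigma}$: I would test the criticality of $u$ against the infinitesimal generator of dilations. Concretely, for $\lambda>0$ set $u_{\lambda}(x):=u(x/\lambda)$. The three terms of $S_{3}$ scale in a simple way: from $\widehat{u_{\lambda}}(\xi)=\lambda\,\hat u(\lambda\xi)$ and the change of variables $\eta=\lambda\xi$ one gets $\norm{D_{x}^{\sigma/2}u_{\lambda}}{L^{2}}^{2}=\lambda^{1-\sigma}\norm{D_{x}^{\sigma/2}u}{L^{2}}^{2}$, while $\norm{u_{\lambda}}{L^{2}}^{2}=\lambda\norm{u}{L^{2}}^{2}$ and $\int_{\bR}F_{3}(u_{\lambda})\,dx=\lambda\int_{\bR}F_{3}(u)\,dx$. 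Hence
\begin{align}
  g(\lambda):=S_{3}(u_{\lambda})=\tfrac{\lambda^{1-\sigma}}{2}\norm{D_{x}^{\sigma/2}u}{L^{2}}^{2}+\tfrac{c\lambda}{2}\norm{u}{L^{2}}^{2}-\lambda\int_{\bR}F_{3}(u)\,dx,
\end{align}
which is an explicit $C^{1}$ function of $\lambda\in(0,\infty)$ since every norm on the right is finite by the Sobolev embedding $H^{\sigma/2}(\bR)\hookrightarrow L^{r}(\bR)$ for $2\le r\le 2^{\ast}_{\sigma}$ together with $p+1,q+1<2^{\ast}_{\sigma}$.

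Differentiating and evaluating at $\lambda=1$ gives
\begin{align}
  g'(1)=\tfrac{1-\sigma}{2}\norm{D_{x}^{\sigma/2}u}{L^{2}}^{2}+\tfrac{c}{2}\norm{u}{L^{2}}^{2}-\int_{\bR}F_{3}(u)\,dx,
\end{align}
and since $\int_{\bR}F_{3}(u)\,dx=\tfrac{1}{p+1}\norm{u}{L^{p+1}}^{p+1}-\tfrac{1}{q+1}\norm{u}{L^{q+1}}^{q+1}$, the claimed identity is exactly the statement $g'(1)=0$. To obtain $g'(1)=0$ I would use that $u$ is a critical point: formally $g'(1)=\dualC{S_{3}'(u)}{\partial_{\lambda}u_{\lambda}|_{\lambda=1}}=\dualC{S_{3}'(u)}{-xu'}=0$, because $S_{3}'(u)=0$.

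The hard part will be to make this last chain rule rigorous, i.e.\ to justify that $\lambda\mapsto u_{\lambda}$ is differentiable into $H^{\sigma/2}(\bR)$ at $\lambda=1$ with admissible derivative $-xu'$; for a solution known a priori only to lie in $H^{\sigma/2}(\bR)$ this is delicate because of the weight $x$ and the insufficient decay. I would resolve it in one of two equivalent ways. First, one may upgrade the solution: from the equation $D_{x}^{\sigma}u=f_{3}(u)-cu$ one bootstraps regularity (the content of the properties section, yielding $u\in H^{\sigma+1}(\bR)$) and the attendant decay of $u$ and $u'$; then, inserting a cutoff $\chi(x/R)$ into the dilation field, I would prove the truncated identity with test function $x\chi(x/R)u'\in H^{\sigma/2}(\bR)$ and let $R\to\infty$, the commutator and boundary contributions vanishing by the decay. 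Alternatively, one may bypass path-differentiability entirely and evaluate the nonlocal term directly: testing \eqref{eq:SP3} against $xu'$ and integrating by parts turns the local terms into $-\tfrac{c}{2}\norm{u}{L^{2}}^{2}+\int_{\bR}F_{3}(u)\,dx$, while self-adjointness of $D_{x}^{\sigma}$ and the Fourier-side computation of the commutator of $D_{x}^{\sigma}$ with the dilation generator $x\partial_{x}$ give $\int_{\bR}(D_{x}^{\sigma}u)\,xu'\,dx=-\tfrac{1-\sigma}{2}\norm{D_{x}^{\sigma/2}u}{L^{2}}^{2}$; combining these reproduces the identity. Either way the genuine difficulty is the same, namely the finiteness and decay needed to discard boundary terms and to pass the cutoff to the limit, which is where the regularity of $u$ is really used.
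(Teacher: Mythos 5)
Your formal computations are all correct: the scaling exponents $\lambda^{1-\sigma}$ and $\lambda$, the identification of the claimed identity with $g'(1)=0$, and the Fourier-side identity $\int_{\bR}(D_{x}^{\sigma}u)\,xu'\,dx=-\tfrac{1-\sigma}{2}\norm{D_{x}^{\sigma/2}u}{L^{2}}^{2}$ all check out, and there is no circularity in invoking the regularity bootstrap (Proposition \ref{prop:regularity_of_solution} is proved in the paper independently of this lemma). Be aware, though, that the paper does not prove this lemma at all: its ``proof'' is a citation of \cite[Proposition 4.1]{Chang-Wang}, where the identity is obtained by passing to the Caffarelli--Silvestre harmonic extension, i.e.\ by localizing $D_{x}^{\sigma}$ as a degenerate elliptic problem in the upper half-plane and running the classical multiplier argument there. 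That choice of technique is not cosmetic, and it points exactly at the genuine gap in your proposal.

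The gap is the claim that bootstrapping to $u\in H^{\sigma+1}(\bR)$ yields ``the attendant decay of $u$ and $u'$''. Sobolev regularity gives $u,u'\rightarrow 0$ at infinity but no rate, and both of your completions need a rate (or weighted integrability). In the direct pairing, $xu'\notin L^{2}(\bR)$ in general, so $\dualC{S_{3}'(u)}{xu'}$ is not even defined; on the Fourier side, the term $\int_{\bR}|\xi|^{\sigma}\xi\,\hat{u}\,\overline{\hat{u}'}\,d\xi$ presupposes $\partial_{\xi}\hat{u}\in L^{2}_{\mathrm{loc}}$, which is $xu\in L^{2}$-type information. In the cutoff version, the error term is controlled by $\int_{|x|\geq R}|x|\,|D_{x}^{\sigma}u|\,|u'|\,dx$, where the weight $|x|\sim R$ exactly cancels the gain from the shrinking domain unless one has quantitative decay such as $|u(x)|+|u'(x)|\lesssim|x|^{-1-\sigma}$; membership in $H^{\sigma+1}(\bR)$ alone does not even guarantee this integral is finite. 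To close the argument you must either (i) prove pointwise decay separately, via the representation $u=C\,N_{c}^{\sigma}\ast f_{3}(u)$ together with decay estimates for the kernel $N_{c}^{\sigma}$ and its derivative, in the spirit of \cite{Frank-Lenzmann} (note that the paper's Lemma \ref{lem:properties_of_integral_kernel} records only integrability, positivity and monotonicity, so this is a genuinely additional step), or (ii) switch to the extension method of the cited reference, whose whole advantage is that the cutoff errors are controlled by finiteness of the weighted Dirichlet energy along a well-chosen sequence of radii, with no pointwise decay required. As written, your proposal is a correct formal derivation plus an accurate diagnosis of the difficulty, but the mechanism you offer to resolve that difficulty does not go through.
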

\begin{proof}
  See \cite[Proposition 4.1]{Chang-Wang} and the references therein.
\end{proof}

By Lemma \ref{lem:Pohozaev}, we obtain
\begin{align}
  S_{3}(u) = \frac{1}{2}\norm{D_{x}^{\sigma/2}u}{L^{2}}^{2} + \frac{c}{2}\norm{u}{L^{2}}^{2} - \frac{1}{p+1}\norm{u}{L^{p+1}}^{p+1} + \frac{1}{q+1}\norm{u}{L^{q+1}}^{q+1} = \frac{\sigma}{2} \norm{D_{x}^{\sigma/2}u}{L^{2}}^{2}
\end{align}
for $u\in\mathcal{N}_{3}$.
Here we define some functionals to use for the proof.
Let $c>0$.
Then we define
  \begin{align}
    P_{c}(u) := -\frac{c}{2}\norm{u}{L^{2}}^{2} + \frac{1}{p+1}\norm{u}{L^{p+1}}^{p+1} - \frac{1}{q+1}\norm{u}{L^{q+1}}^{q+1}
  \end{align}
for $u\in H^{\sigma/2}(\bR)$, and
\begin{align}
  \mathcal{P}_{c} &:= \left\{
    \begin{aligned}
      & \set{ v\in H^{\sigma/2}(\bR)\setminus \{0\} }{ P_{c}(v)=0 }, & & \text{for} \ \sigma = 1, \label{eq:B_3} \\
      & \set{ v\in H^{\sigma/2}(\bR)\setminus \{0\} }{ P_{c}(v)=1 }, & & \text{for} \ 0 < \sigma < 1.
    \end{aligned}
  \right.
\end{align}

To find a solution to \eqref{eq:SP3}, we consider the following minimizing problem:
\begin{align}
  j_{3} := \inf_{v\in\mathcal{P}_{c}} J_{3}(v), \label{eq:minimizing_problem}
\end{align}
where
\begin{align}
  J_{3}(u):= \frac{\sigma}{2}\norm{D_{x}^{\sigma/2}u}{L^{2}}^{2}
\end{align}
for $u\in H^{\sigma/2}(\bR)$.

\begin{lem}
  Let $c\in(0,c_{0})$. Then $\mathcal{P}_{c} \neq \emptyset$ holds. \label{lem:nonempty_of_constraint}
\end{lem}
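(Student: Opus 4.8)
The plan is to rewrite the constraint in pointwise form and reduce the nonemptiness of $\mathcal{P}_{c}$ to a one-variable maximization whose threshold is exactly $c_{0}$. Setting
\[
  W(s) := -\frac{c}{2}s^{2} + \frac{1}{p+1}|s|^{p+1} - \frac{1}{q+1}|s|^{q+1}, \qquad s\in\bR,
\]
we have $P_{c}(u) = \int_{\bR} W(u)\,dx$. The first step I would establish is the equivalence
\[
  c < c_{0} \iff \exists\,\zeta>0 \ \text{with}\ W(\zeta)>0 .
\]
Indeed, for $s>0$ one checks $W(s)>0 \iff h(s) > c/2$, where $h(s):=\frac{1}{p+1}s^{p-1} - \frac{1}{q+1}s^{q-1}$, so the existence of such a $\zeta$ is equivalent to $\tfrac{c}{2} < \sup_{s>0} h(s)$.

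I would then carry out the explicit optimization of $h$. Differentiating, the unique positive critical point is
\[
  s_{*} = \left( \frac{(p-1)(q+1)}{(p+1)(q-1)} \right)^{\frac{1}{q-p}},
\]
and substituting back, while using the identity $\alpha = \beta + 1$ (which follows from $\alpha=\frac{q-1}{q-p}$, $\beta=\frac{p-1}{q-p}$), one finds $\sup_{s>0}h(s) = h(s_{*}) = c_{0}/2$. Thus $c<c_{0}$ is precisely the condition that $W$ is positive somewhere on $(0,\infty)$, which is exactly the Berestycki--Lions--type hypothesis needed for the construction below. This substitution — verifying that $h(s_{*})$ collapses to the stated constant $c_{0}/2$ — is the main obstacle; the remaining steps are soft.

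Finally, given $\zeta>0$ with $W(\zeta)>0$, I would build a plateau test function: a smooth, compactly supported $u_{R}$ (hence $u_{R}\in H^{\sigma/2}(\bR)$, since $H^{1}(\bR)\hookrightarrow H^{\sigma/2}(\bR)$) equal to $\zeta$ on $[-R,R]$ and vanishing outside $[-R-1,R+1]$. Bounding the two transition contributions by a constant $C_{\zeta}$ independent of $R$ gives $P_{c}(u_{R}) \ge 2R\,W(\zeta) - C_{\zeta} \to +\infty$, so I may fix $R$ with $P_{c}(u_{R}) > 1$. Then I invoke amplitude scaling: the map $s \mapsto P_{c}(s u_{R})$ is continuous, tends to $0^{-}$ as $s\to 0^{+}$ (the term $-\frac{c}{2}s^{2}\norm{u_{R}}{L^{2}}^{2}$ dominates since $p+1,q+1>2$), and exceeds $1$ at $s=1$. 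By the intermediate value theorem there is $s_{0}\in(0,1)$ with $P_{c}(s_{0}u_{R}) = 0$ when $\sigma=1$ and with $P_{c}(s_{0}u_{R}) = 1$ when $0<\sigma<1$; in either case $s_{0}u_{R}$ is a nonzero element of $\mathcal{P}_{c}$, so $\mathcal{P}_{c}\neq\emptyset$.
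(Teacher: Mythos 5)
Your proof is correct and follows essentially the same route as the paper: both arguments reduce the hypothesis $c\in(0,c_{0})$ to the pointwise condition that the potential ($W$ in your notation, $-G_{c}$ in the paper's) is positive somewhere, construct a test function on which $P_{c}$ is positive, and then scale that function onto the constraint ($P_{c}=0$ for $\sigma=1$, $P_{c}=1$ for $0<\sigma<1$). The only differences are matters of self-containment: you verify the threshold identity $\sup_{s>0}h(s)=c_{0}/2$ and carry out the plateau construction explicitly, where the paper asserts the former and cites the proof of Theorem 2 of Berestycki--Lions for the latter, and you use amplitude scaling plus the intermediate value theorem in both cases, whereas the paper uses spatial dilation when $0<\sigma<1$.
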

\begin{proof}
  First, we show that there exists $v\in H^{1}(\bR)$ such that $P_{c}(v)<0$.
  We put
  \begin{align}
    G_{c}(s) := \frac{c}{2}s^{2} - \frac{1}{p+1}|s|^{p+1} + \frac{1}{q+1}|s|^{q+1}, \ s\in\bR.
  \end{align}
  If $c\in(0,c_{0})$, we can see that there exists $s_{0}>0$ such that $G_{c}(s_{0})<0$.
  Under this condition, we can find $v_{0}\in H^{1}(\bR)$ satisfying $P_{c}(v_{0}) < 0$ (see the proof of Theorem 2 of \cite{Berestycki-Lions-1}.)

  For case $\sigma=1$, considering the graph of the function $(0,\infty) \ni \lambda \mapsto I_{c}(\lambda v_{0})$ yields that there exists $\lambda_{0} > 0$ such that $P_{c}(\lambda_{0} v_{0}) = 0$, which implies that $\lambda_{0} v_{0}$ is a desired function to find.

  If $0 < \sigma < 1$, there exist $v_{0}\in H^{1}(\bR)$ and $\alpha>0$ such that $P_{c}(v_{0}) = \alpha$, from the discussion in the first half.
  Here we set $v_{0}^{\alpha}(x) := v_{0}(\alpha x)$ for $x\in\bR$ so that $P_{c}(v_{0}^{\alpha}) = 1$.
  Therefore, $v_{0}^{\alpha}$ is a desired function.
\end{proof}

Now, we state the existence of a minimizer of \eqref{eq:minimizing_problem}.

\begin{prop}
  Let $0 < \sigma \leq 1$.
  If $c\in(0,c_{0})$, then there exists a minimizer of \eqref{eq:minimizing_problem} which is nonnegative, even, and decreasing in $|x|$. \label{prop:existense_of_minimizer_of_j}
\end{prop}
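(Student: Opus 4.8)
The plan is to solve the constrained problem \eqref{eq:minimizing_problem} by the direct method, using the symmetric decreasing rearrangement to recover compactness and the scaling structure of $J_{3}$ and $P_{c}$ to keep the weak limit on the constraint. Writing $u_{\lambda}(x):=u(x/\lambda)$, I will use throughout the relations
\begin{align}
  \norm{D_{x}^{\sigma/2}u_{\lambda}}{L^{2}}^{2} = \lambda^{1-\sigma}\norm{D_{x}^{\sigma/2}u}{L^{2}}^{2}, \qquad P_{c}(u_{\lambda}) = \lambda P_{c}(u).
\end{align}
First I fix a minimizing sequence $(u_{n})_{n}\subset\mathcal{P}_{c}$ with $J_{3}(u_{n})\to j_{3}$, which is possible by Lemma \ref{lem:nonempty_of_constraint}. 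Since $P_{c}$ depends on $u$ only through $\norm{u}{L^{2}}$, $\norm{u}{L^{p+1}}$ and $\norm{u}{L^{q+1}}$, and rearrangement preserves every $L^{r}$-norm, replacing $u_{n}$ by $u_{n}^{\ast}$ leaves $P_{c}$ unchanged while \eqref{eq:A_101}--\eqref{eq:A_102} give $J_{3}(u_{n}^{\ast})\leq J_{3}(u_{n})$. Hence I may assume each $u_{n}$ is nonnegative, even and decreasing in $|x|$; the same remark applied to a minimizer shows that once existence is established it can be taken with the stated symmetry, so it remains only to produce a minimizer.

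Next I establish boundedness of $(u_{n})_{n}$ in $H^{\sigma/2}(\bR)$; the control of $\norm{D_{x}^{\sigma/2}u_{n}}{L^{2}}$ is immediate from $J_{3}(u_{n})\to j_{3}$. For $0<\sigma<1$ the Gagliardo--Nirenberg inequality gives $\norm{u}{L^{p+1}}^{p+1}\leq C\norm{D_{x}^{\sigma/2}u}{L^{2}}^{(p-1)/\sigma}\norm{u}{L^{2}}^{(p+1)-(p-1)/\sigma}$, while the constraint $P_{c}(u_{n})=1$ yields $\tfrac{c}{2}\norm{u_{n}}{L^{2}}^{2}\leq\tfrac{1}{p+1}\norm{u_{n}}{L^{p+1}}^{p+1}$. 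Combining these,
\begin{align}
  \norm{u_{n}}{L^{2}}^{2} \leq C'\norm{D_{x}^{\sigma/2}u_{n}}{L^{2}}^{(p-1)/\sigma}\norm{u_{n}}{L^{2}}^{(p+1)-(p-1)/\sigma},
\end{align}
and because the net exponent of $\norm{u_{n}}{L^{2}}$ equals $(p-1)\left(\tfrac{1}{\sigma}-1\right)>0$, this forces $\norm{u_{n}}{L^{2}}$ to be bounded. For $\sigma=1$ the argument degenerates, since $J_{3}$ and the set $\{P_{c}=0\}$ are both invariant under $u\mapsto u_{\lambda}$; there I first exploit this dilation invariance to normalize $\norm{u_{n}}{L^{2}}=1$, which keeps $(u_{n})_{n}$ minimizing and on the constraint, after which it is again bounded.

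Third, I extract a limit and exclude escape of mass. A nonnegative even function decreasing in $|x|$ satisfies $u_{n}(x)^{2}|x|\leq\int_{0}^{|x|}u_{n}^{2}\leq\tfrac12\norm{u_{n}}{L^{2}}^{2}$, giving the uniform pointwise bound $u_{n}(x)\leq C|x|^{-1/2}$; for $2<r<2^{\ast}_{\sigma}$ this makes the tails $\int_{|x|>R}u_{n}^{r}$ uniformly small, and with the local Rellich embedding $H^{\sigma/2}(\bR)\hookrightarrow L^{r}_{\mathrm{loc}}(\bR)$ it yields, along a subsequence, $u_{n}\to u$ strongly in $L^{p+1}(\bR)$ and $L^{q+1}(\bR)$ and weakly in $H^{\sigma/2}(\bR)$. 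The constraint forces $\tfrac{1}{p+1}\norm{u_{n}}{L^{p+1}}^{p+1}\geq P_{c}(u_{n})+\tfrac{c}{2}\norm{u_{n}}{L^{2}}^{2}$ to stay bounded below, so $\norm{u}{L^{p+1}}>0$ and $u\neq0$. Weak lower semicontinuity then gives $J_{3}(u)\leq\liminf J_{3}(u_{n})=j_{3}$, and passing to the limit in $P_{c}(u_{n})$ using the strong $L^{p+1},L^{q+1}$ convergence together with the weak lower semicontinuity of $\norm{\cdot}{L^{2}}$ gives $P_{c}(u)\geq1$ when $0<\sigma<1$ and $P_{c}(u)\geq0$ when $\sigma=1$.

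It remains to put $u$ on the constraint. For $0<\sigma<1$, if $P_{c}(u)>1$ then $u_{\lambda}$ with $\lambda=1/P_{c}(u)\in(0,1)$ satisfies $P_{c}(u_{\lambda})=1$ and $J_{3}(u_{\lambda})=\lambda^{1-\sigma}J_{3}(u)<j_{3}$, a contradiction; hence $P_{c}(u)=1$ and $J_{3}(u)=j_{3}$. For $\sigma=1$, if $P_{c}(u)>0$ then, since $P_{c}(tu)<0$ for small $t>0$ (the lowest-order term being $-\tfrac{c}{2}t^{2}\norm{u}{L^{2}}^{2}$) while $P_{c}(u)>0$, there is $t_{0}\in(0,1)$ with $P_{c}(t_{0}u)=0$ and $J_{3}(t_{0}u)=t_{0}^{2}J_{3}(u)<j_{3}$, again a contradiction; hence $P_{c}(u)=0$ and $J_{3}(u)=j_{3}$. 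Rearranging the minimizer so obtained produces one that is nonnegative, even and decreasing in $|x|$. I expect the main obstacle to be precisely the scale-critical case $\sigma=1$: there $J_{3}$ and the constraint are simultaneously dilation-invariant, so coercivity of the minimizing sequence is lost and must be restored by normalization, and the weak limit can only be driven back onto $\{P_{c}=0\}$ by the amplitude-scaling argument; the technical heart in both cases is the exclusion of mass escaping to infinity, which the pointwise decay estimate for symmetric decreasing functions makes available in one dimension, where ordinary radial compactness fails.
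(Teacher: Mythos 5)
Your proof is correct and follows essentially the same route as the paper's: a rearranged minimizing sequence, the pointwise decay $u_{n}(x)\leq C|x|^{-1/2}$ of symmetric decreasing functions to rule out vanishing and upgrade to strong $L^{p+1}$, $L^{q+1}$ convergence, Fatou/weak lower semicontinuity to conclude $P_{c}(u)\geq 1$ (resp.\ $P_{c}(u)\geq 0$), and then dilation scaling (resp.\ amplitude scaling) to push the weak limit back onto the constraint, including the same $L^{2}$-normalization trick in the scale-invariant case $\sigma=1$. The only departures are at two technical steps, and both of your substitutes are valid: for boundedness when $0<\sigma<1$ you combine the Gagliardo--Nirenberg inequality with the constraint inequality $\tfrac{c}{2}\|u_{n}\|_{L^{2}}^{2}\leq\tfrac{1}{p+1}\|u_{n}\|_{L^{p+1}}^{p+1}$ (the exponent bookkeeping $(p-1)(\tfrac{1}{\sigma}-1)>0$ indeed closes, since $p<2^{\ast}_{\sigma}-1$ guarantees the interpolation exponent is admissible), whereas the paper integrates a pointwise Young-type inequality $s^{p}-s^{q}\leq\tfrac{c}{2}s+C_{0}s^{2^{\ast}_{\sigma}-1}$ and applies the Sobolev inequality; and for compactness you use an explicit tail estimate plus the local Rellich embedding, whereas the paper cites Strauss's compactness lemma (Lemma \ref{lem:Strauss}) together with Lemma \ref{lem:Radial}. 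Your versions are somewhat more self-contained (you essentially reprove the Strauss-type compactness in one dimension rather than quoting it), while the paper's pointwise inequality avoids any interpolation; the content and the structure of the argument are otherwise identical.
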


Here we introduce two lemmas related to the compactness used in the proof.

\begin{lem}
  Let $r\in[1,\infty)$ and $u\in L^{r}(\bR)$ be nonnegative, even, and decreasing in $|x|$.
  Then
  \begin{align}
    u(x) \leq 2^{-1/r} |x|^{-1/r} \norm{u}{L^{r}}
  \end{align}
  holds for almost all $x\in \bR$.  \label{lem:Radial}
\end{lem}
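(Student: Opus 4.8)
The plan is to exploit the monotonicity of $u$ directly, estimating the value of $u$ at a point $x$ by comparing it with the mass of $u$ on the symmetric interval of radius $|x|$ centered at the origin. No rearrangement machinery or Fourier analysis is needed; this is purely an elementary measure-theoretic comparison.

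First I would fix an arbitrary $x\in\bR$ with $x\neq 0$ and set $R:=|x|$. Since $u$ is nonnegative, even, and decreasing in $|x|$, for every $y$ with $|y|\leq R$ we have $u(y)\geq u(x)\geq 0$, hence $u(y)^{r}\geq u(x)^{r}$. Integrating this pointwise bound over the interval $\set{y\in\bR}{|y|\leq R}$, whose Lebesgue measure equals $2R$, and discarding the remaining nonnegative part of the integral, gives
\begin{align}
  \norm{u}{L^{r}}^{r} = \int_{\bR} u(y)^{r} \, dy \geq \int_{|y|\leq R} u(y)^{r} \, dy \geq 2R\, u(x)^{r}.
\end{align}
Rearranging and taking $r$-th roots then yields
\begin{align}
  u(x) \leq \left( \frac{1}{2R} \right)^{1/r} \norm{u}{L^{r}} = 2^{-1/r} |x|^{-1/r} \norm{u}{L^{r}},
\end{align}
which is exactly the claimed estimate.

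Finally I would address the ``almost all'' clause. The hypothesis that $u$ is decreasing in $|x|$ should be read for the canonical monotone representative of $u$ (for instance the one produced by the symmetric decreasing rearrangement recalled at the start of \S\ref{Section:existence_of_GS}), and the argument above establishes the inequality at \emph{every} nonzero point of that representative. Since $\{0\}$ is a null set and any two representatives of $u$ agree a.e., the bound holds for almost all $x\in\bR$.

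The argument is essentially immediate, so I do not expect a serious obstacle. The only point that requires care is the measure-theoretic one just mentioned: one must apply the monotonicity hypothesis to a representative for which ``$u(y)\geq u(x)$ whenever $|y|\leq|x|$'' holds in the pointwise sense, so that the integral comparison over $\{|y|\leq R\}$ is legitimate.
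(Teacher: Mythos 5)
Your proof is correct and is essentially the same argument as the paper's: both exploit evenness and monotonicity to bound $\norm{u}{L^{r}}^{r}$ from below by $2|x|\,u(x)^{r}$ (the paper integrates over $(0,x)$ after reducing to $x>0$, you integrate over $\{|y|\leq|x|\}$, which is the identical comparison). Your extra remark on choosing the monotone representative to justify the ``almost all'' clause is a reasonable clarification but does not change the route.
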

\begin{proof}
  Without loss of generality, we may assume that $x>0$.
  A direct calculation yields
  \begin{align}
    \norm{u}{L^{r}}^{r} = 2 \int_{0}^{\infty} u(y)^{r} \ dy \geq 2\int_{0}^{x} u(y)^{r} \ dy \geq 2u(x)^{r} x,
  \end{align}
  which implies the desired inequality.
\end{proof}

\begin{lem}[Strauss\cite{Strauss}]
  Assume that $P, \ Q\colon \bR \rightarrow \bR$ are continuous functions satisfying
  \begin{align}
    \frac{P(s)}{Q(s)} \rightarrow 0 \ \text{\rm as} \ |s| \rightarrow \infty \ \text{\rm and} \ |s| \rightarrow 0.
  \end{align}
  Moreover, let $(u_{n})_{n}$ be a sequence of measurable functions in $\bR$ which satisfies
  \begin{align}
    \sup_{n\in\bN}\int_{\bR}|Q(u_{n})| \, dx < \infty
  \end{align}
  and assume that there exists a measurable function $v\colon \bR \rightarrow \bR$ such that $P(u_{n}(x))\rightarrow v(x)$ a.e.\ in $\bR$, and $u_{n}(x)\rightarrow 0$ as $|x|\rightarrow 0$ uniformly respect to $n\in\bN$.
  Then we obtain $P(u_{n})\rightarrow v$ in $L^{1}(\bR)$. \label{lem:Strauss}
\end{lem}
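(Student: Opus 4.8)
The plan is to follow the classical two-stage argument for the Strauss compactness lemma. I would first prove $L^{1}$-convergence on every bounded set using only the growth condition $P(s)/Q(s)\to 0$ as $|s|\to\infty$ together with the uniform bound on $\int_{\bR}|Q(u_{n})|\,dx$, and then upgrade this to convergence in $L^{1}(\bR)$ by using $P(s)/Q(s)\to 0$ as $|s|\to 0$ and the uniform spatial decay of $u_{n}$. (I read the spatial hypothesis as $u_{n}(x)\to 0$ as $|x|\to\infty$ uniformly in $n$, since it is precisely decay at infinity, not at the origin, that is needed to control the tail.)

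For the bounded part, fix a bounded measurable set $B\subset\bR$ and $\varepsilon>0$. The growth hypothesis at infinity yields $M=M(\varepsilon)$ with $|P(s)|\le\varepsilon|Q(s)|$ for $|s|\ge M$, while continuity of $P$ gives $K_{M}:=\max_{|s|\le M}|P(s)|<\infty$. Splitting $B$ into $\{|u_{n}|\ge M\}$ and $\{|u_{n}|<M\}$ and using $|Q(u_n)|$-domination on the first set, I obtain for every measurable $E\subseteq B$ the estimate
\[
  \int_{E}|P(u_{n})|\,dx \le \varepsilon\,\sup_{m\in\bN}\int_{\bR}|Q(u_{m})|\,dx + K_{M}\,|E|.
\]
Taking $E=B$ and applying Fatou's lemma to $P(u_{n})\to v$ a.e.\ shows $v\in L^{1}(B)$; the same estimate, with $\varepsilon$ chosen first and then $|E|$ small, shows that $(P(u_{n}))_{n}$ is uniformly integrable on $B$. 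Since $P(u_{n})\to v$ a.e.\ and $|B|<\infty$, Vitali's convergence theorem then gives $\int_{B}|P(u_{n})-v|\,dx\to 0$.

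To pass from bounded sets to all of $\bR$, fix $\varepsilon>0$ and use $P(s)/Q(s)\to 0$ as $|s|\to 0$ to pick $\delta>0$ with $|P(s)|\le\varepsilon|Q(s)|$ for $|s|\le\delta$. The uniform decay of $u_{n}$ furnishes $R=R(\delta)>0$ with $|u_{n}(x)|\le\delta$ for all $n$ and all $|x|\ge R$, so that $|P(u_{n}(x))|\le\varepsilon|Q(u_{n}(x))|$ there and hence
\[
  \int_{|x|\ge R}|P(u_{n})|\,dx \le \varepsilon\,\sup_{m\in\bN}\int_{\bR}|Q(u_{m})|\,dx =: \varepsilon C
\]
uniformly in $n$; Fatou's lemma bounds $\int_{|x|\ge R}|v|\,dx$ by the same quantity. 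Thus the tail contribution to $\int_{\bR}|P(u_{n})-v|\,dx$ is at most $2\varepsilon C$ for every $n$, while the bounded estimate on $\{|x|\le R\}$ tends to $0$ as $n\to\infty$ for this fixed $R$. Letting $\varepsilon\to 0$ yields $P(u_{n})\to v$ in $L^{1}(\bR)$.

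The main obstacle is the bounded-set step: one must extract $L^{1}$-compactness from merely a.e.\ convergence, despite the possibility that $P(u_{n})$ concentrates, and the only leverage is the asymptotic domination $|P|\le\varepsilon|Q|$ at large values combined with the uniform $L^{1}$-bound on $Q(u_{n})$. Encoding this precisely as uniform integrability, so that Vitali's theorem applies, is the decisive point. The tail estimate, by contrast, is a routine consequence of the small-value condition and the uniform spatial decay, which together push $|u_{n}|$ below the threshold $\delta$ outside a fixed ball.
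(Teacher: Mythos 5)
Your proof is correct, but there is nothing in the paper to compare it against: the lemma is quoted from Strauss~\cite{Strauss} and used as a black box, with no proof supplied (the appendix proves only Lemma~\ref{lem:Lieb}). What you wrote is the classical two-stage Strauss compactness argument, and it is complete. On a bounded set $B$, the splitting over $\{|u_{n}|\geq M\}$ and $\{|u_{n}|<M\}$ converts the large-$s$ domination $|P(s)|\leq\varepsilon|Q(s)|$ plus the uniform bound on $\int_{\bR}|Q(u_{n})|\,dx$ into uniform integrability, and Vitali's theorem (with Fatou supplying $v\in L^{1}(B)$) gives $L^{1}(B)$-convergence; the tail is then handled by the small-$s$ domination together with the uniform spatial decay, with Fatou again controlling $\int_{|x|\geq R}|v|\,dx$. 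One detail you use implicitly does hold and is worth recording: continuity of $P$ together with the limit condition at $0$ forces $P(0)=0$, so the bound $|P(s)|\leq\varepsilon|Q(s)|$ for $|s|\leq\delta$ is valid also at points where $u_{n}$ vanishes. You were also right to emend the hypothesis: ``$u_{n}(x)\rightarrow 0$ as $|x|\rightarrow 0$'' is a typo for decay as $|x|\rightarrow\infty$ uniformly in $n$ (this is how the lemma is applied in the proof of Proposition~\ref{prop:existense_of_minimizer_of_j}, via the pointwise bound of Lemma~\ref{lem:Radial}), and as literally printed the statement is false: take $P(s)=s^{2}$, $Q(s)=|s|+|s|^{3}$ and $u_{n}=\mathbf{1}_{[n,n+1]}$, for which every printed hypothesis holds and $P(u_{n})\rightarrow 0$ a.e., yet $\int_{\bR}|P(u_{n})|\,dx=1$ for all $n$. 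Your tail estimate is precisely where the corrected hypothesis is consumed.
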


Here we prove Proposition \ref{prop:existense_of_minimizer_of_j}.

\begin{proof}[Proof of Proposition \ref{prop:existense_of_minimizer_of_j}]
  We split the proof into two cases with respect to $\sigma$.

  \underline{Case 1. $\sigma=1$}:
  Let $(u_{n})_{n}\subset H^{1/2}(\bR)$ be a minimizing sequence of $j_{3}$ so that we have $S_{3}(u_{n}) \rightarrow j_{3}$ and $P_{c}(u_{n})=0$ for all $n\in\bN$.
  By a certain scaling, we may assume that $\norm{u_{n}}{L^{2}} = 1$ for all $n\in\bN$.
  Therefore, $(u_{n})_{n}$ is bounded in $H^{1/2}(\bR)$.
  Here we take the symmetric decreasing rearrangement of $(u_{n})_{n}$, denoting $(u^{\ast}_{n})_{n}$.
  Then $(u^{\ast}_{n})_{n}$ is also bounded in $H^{1/2}(\bR)$ and we have $J_{3}(u^{\ast}_{n}) \leq J_{3}(u_{n}), \ P_{c}(u^{\ast}_{n}) = 0, \ \norm{u^{\ast}_{n}}{L^{2}} = 1$ for all $n\in\bN$.
  Taking a subsequence of $(u^{\ast}_{n})_n$ and the Rellich compact embedding yield that there exists $v_{0}\in H^{1/2}(\bR)$ such that $u^{\ast}_{n} \rightharpoonup v_{0}$ weakly in $H^{1/2}(\bR)$ and $u^{\ast} \rightarrow v_{0}$ a.e.\ in $\bR$.
  The second convergence means that we can take the weak limit $v_{0}$ as even, nonnegative, and decreasing in $|x|$.

  In the following, we show that $v_{0}$ attains \eqref{eq:minimizing_problem}.
  First, we obtain
  \begin{align}
    J_{3}(v_{0}) \leq \liminf_{n\rightarrow\infty} J_{3}(u^{\ast}_{n}) \leq \liminf_{n\rightarrow\infty} J_{3}(u_{n}) = j_{3}, \label{eq:B_4}
  \end{align}
  by the lower semicontinuity of the norms.
  Next, by applying Lemma \ref{lem:Radial} to $(u^{\ast}_{n})_{n}$, there exists $C > 0$ such that $|u^{\ast}_{n}(x)| \leq C |x|^{-1/(q+1)}$ for all $n\in\bN$, which means that $u^{\ast}_{n} \rightarrow 0$ as $|x|\rightarrow\infty$ uniformly respect to $n\in\bN$.
  Here we let $r \in (1,\infty)$ and put $P(s):=|s|^{r+1}, \ Q(s):=s^{2} + |s|^{r+2}$ for $s\in\bR$ and $w_{n}:=u^{\ast}_{n}-v_{0}$ for $n\in\bN$.
  Then we can see that the functions $P$ and $Q$ and the sequence $(w_{n})_{n}$ satisfy the assumptions of the Lemma \ref{lem:Strauss}.
  Therefore, we obtain $\norm{w_{n}}{L^{r+1}}^{r+1} \rightarrow 0$.
  Furthermore, by $P_{c}(u^{\ast}_{n})=0$ and $\norm{u^{\ast}_{n}}{L^{2}}=1$, we have
  \begin{align}
    \frac{c}{2} = P_{c}(u^{\ast}_{n}) + \frac{c}{2}\norm{u^{\ast}_{n}}{L^{2}}^{2} = \frac{1}{p+1}\norm{u^{\ast}_{n}}{L^{p+1}}^{p+1} - \frac{1}{q+1}\norm{u^{\ast}_{n}}{L^{q+1}}^{q+1}. \label{eq:B_5}
  \end{align}
  Combining \eqref{eq:B_5} with the convergence yields
  \begin{align}
    \frac{c}{2} = \frac{1}{p+1}\norm{v_{0}}{L^{p+1}}^{p+1} - \frac{1}{q+1}\norm{v_{0}}{L^{q+1}}^{q+1} = P_{c}(v_{0}) + \frac{c}{2}\norm{v_{0}}{L^{2}}^{2},
  \end{align}
  which implies $v_{0}\neq 0$ and this concludes $j_{3}>0$.
  Moreover, by the Fatou lemma, we have $P_{c}(v_{0}) \geq 0$.
  Here we see that $P_{c}(v_{0})=0$ by contradiction.
  Assume $P_{c}(v_{0}) > 0$ and we consider the function
  \begin{align}
    (0,\infty) \ni \lambda \mapsto P_{c}(\lambda v_{0}) = -\frac{c}{2}\lambda^{2}\norm{v_{0}}{L^{2}}^{2} + \frac{\lambda^{p+1}}{p+1}\norm{v_{0}}{L^{p+1}}^{p+1} - \frac{\lambda^{q+1}}{q+1}\norm{v_{0}}{L^{q+1}}^{q+1}.
  \end{align}
  The shape of the graph gives $\lambda_{0}\in(0,1)$ such that $P_{c}(\lambda_{0} v_{0}) = 0$.
  However, $J_{3}(\lambda_{0} v_{0}) = \lambda_{0}^{2}J_{3}(v_{0}) \leq \lambda_{0} j_{3} < j_{3}$, which contradicts to the definition of $j_{3}$.
  Hence, we have $P_{c}(v_{0})=0$.
  Thus, we finally obtain $J_{3}(v_{0})=j_{3}$, which implies that $v_{0}$ is a desired minimizer.
  
  \underline{Case 2. $0 < \sigma < 1$}:
  Let $(u_{n})_{n}\subset H^{\sigma/2}(\bR)$ be a minimizing sequence of $j_{3}$, that is, $S_{3}(u_{n})\rightarrow j_{3}$, $P_{c}(u_{n})=0$ for all $n\in\mathbb{N}$.
  First, we show that $(u_{n})_{n}$ is bounded in $H^{\sigma/2}(\bR)$.
  We can see that there exists $C_{0}>0$ such that
  \begin{align}
      s^{p} - s^{q} \leq \frac{c}{2}s + C_{0}s^{2^{\ast}_{\sigma}-1}
  \end{align}
  for $s>0$.
  By integration, we have
  \begin{align}
      \frac{1}{p+1}s^{p+1} - \frac{1}{q+1}s^{q+1} \leq \frac{c}{4}s^{2} + Cs^{2^{\ast}_{\sigma}} \label{eq:B_9}
  \end{align}
  for $s>0$.
  From $P_{c}(u_{n})=0$, we have
  \begin{align}
    1 + \frac{c}{2}\norm{u_{n}}{L^{2}}^{2} = \frac{1}{p+1}\norm{u_{n}}{L^{p+1}}^{p+1} - \frac{1}{q+1}\norm{u_{n}}{L^{q+1}}^{q+1}. \label{eq:B_10}
  \end{align}
  Applying \eqref{eq:B_9} and the Sobolev inequality, we obtain
  \begin{align}
      \frac{1}{p+1}\norm{u_{n}}{L^{p+1}}^{p+1} - \frac{1}{q+1}\norm{u_{n}}{L^{q+1}}^{q+1} &\leq \frac{c}{4}\norm{u_{n}}{L^{2}}^{2} + C\norm{D_{x}^{\sigma/2}u_{n}}{L^{2}}^{2} \\
      &\leq \frac{c}{4}\norm{u_{n}}{L^{2}}^{2} + C. \label{eq:B_11}
  \end{align}
  Combining \eqref{eq:B_10} and \eqref{eq:B_11} yields
  \begin{align}
      \frac{c}{4}\norm{u_{n}}{L^{2}}^{2} \leq C,
  \end{align}
  which implies that $(u_{n})_{n}$ is bounded in $H^{\sigma/2}(\bR)$.
  Therefore, the symmetric decreasing rearrangement $(u^{\ast}_{n})_{n}$ of $(u_{n})_{n}$ is also bounded in $H^{\sigma/2}(\bR)$.
  By taking a subsequence, we obtain $v_{0}\in H^{\sigma/2}(\bR)$ such that $u^{\ast}_{n} \rightharpoonup v_{0}$ weakly in $H^{\sigma/2}(\bR)$ and $u^{\ast}_{n} \rightarrow v_{0}$ a.e.\ in $\bR$.
  Same as Case 1, $v_{0}$ can be taken as even, nonnegative, and decreasing in $|x|$.

  Next, we prove that $v_{0}$ is a desired minimizer.
  By the Fatou lemma and Lemmas \ref{lem:Radial}, \ref{lem:Strauss}, we obtain
  \begin{align}
    \frac{c}{2}\norm{v_{0}}{L^{2}}^{2} &\leq \liminf_{n\rightarrow\infty} \left( \frac{c}{2}\norm{u^{\ast}_{n}}{L^{2}}^{2} \right) \\
      &= \liminf_{n\rightarrow\infty} \left( \frac{1}{p+1}\norm{u^{\ast}_{n}}{L^{p+1}}^{p+1} - \frac{1}{q+1}\norm{u^{\ast}_{n}}{L^{q+1}}^{q+1} -1 \right) \\
      &= \frac{1}{p+1}\norm{v_{0}}{L^{p+1}}^{p+1} - \frac{1}{q+1}\norm{v_{0}}{L^{q+1}}^{q+1} -1,
  \end{align}
  which implies $P_{c}(v_{0}) \geq 1$.
  Furthermore, we have $J_{3}(v_{0}) \leq j_{3}$ by the lower semicontinuity of the norms.
  Therefore, we can see $P_{c}(v_{0}) = 1$.
  Indeed, assume that $P_{c}(v_{0}) > 1$.
  Then there exists $\alpha\in(0,1)$ such that $P_{c}(v_{0}^{\alpha}) = 1$, where $v_{0}^{\alpha}(x) := v_{0}(x/\alpha)$.
  Therefore, we have $J_{3}(v_{0}^{\alpha}) = \alpha^{1-\sigma} J_{3}(v_{0}) < j_{3}$, which contradicts to the definition of $j_{3}$.
  Hence, $v_{0}$ is a desired minimizer.
\end{proof}

Now, we show Proposition \ref{prop:existence_of_ground_state_of_SP3}.

\begin{proof}[Proof of Propositon \ref{prop:existence_of_ground_state_of_SP3}]

  Let $v_{0}\in H^{\sigma/2}(\bR)$ be a minimizer of \eqref{eq:minimizing_problem} which is nonnegative, even, and decreasing in $|x|$.
  Then there exists a Lagrange multiplier $\mu\in\bR$ such that $J_{3}'(v_{0}) = \mu P_{c}'(v_{0})$, that is
  \begin{align}
    D_{x}^{\sigma}(v_{0}) = \mu (-cv_{0} + |v_{0}|^{p-1}v_{0} - |v_{0}|^{q-1}v_{0}), \label{eq:B_14}
  \end{align}
  Here we show $\mu > 0$.
  Multiplying $v_{0}$ to both hand sides of \eqref{eq:B_14} yields
  \begin{align}
    0 < \norm{D_{x}^{\sigma/2}v_{0}}{L^{2}}^{2} = \mu\dualC{P_{c}'(v_{0})}{v_{0}}, \label{eq:B_16}
  \end{align}
  which implies that $\mu \neq 0$ and
  \begin{align}
    \dualC{P_{c}'(v_{0})}{v_{0}} \neq 0. \label{eq:B_15}
  \end{align}
  Now, we consider the graph of the function $(0,\infty) \ni \lambda \mapsto P_{c}(\lambda v_{0})$.
  Thanks to \eqref{eq:B_15}, we can find two points $0 < \lambda_{1} < \lambda_{2} < \infty$ which satisfy $P_{c}(\lambda_{j} v_{0})= 0$ for $j=1, \ 2$ and either of whom equals to $1$.
  Then we can see that $\lambda_{1}=1$.
  Indeed, if we assume that $\lambda_{2}=1$, the same contradiction occurs as in the proof of Case 1 of Proposition \ref{prop:existense_of_minimizer_of_j}.
  Therefore, we obtain $\dualC{P_{c}'(v_{0})}{v_{0}} > 0$, which implies $\mu > 0$ by \eqref{eq:B_16}.
  Then we set $\phi(x) := v_{0}(\mu^{1/\sigma}x)$ for $x\in\bR$ so that $\phi$ is a solution to \eqref{eq:SP3}.

  In case $\sigma = 1$, we can see that $\norm{D_{x}^{1/2}\phi}{L^{2}}^{2} = \norm{D_{x}^{1/2}v_{0}}{L^{2}}^{2}$ and $P_{c}(\phi)=P_{c}(v_{0})=0$, which means $S_{3}(\phi) = S_{3}(v_{0}) = J_{3}(v_{0}) = j_{3}$ and then $\phi$ is a ground state of \eqref{eq:SP3}.

  If $0 < \sigma <1$, we use the Pohozaev identity to find a ground state.
  First, since $\phi\in\mathcal{N}_{3}$, we obtain
  \begin{align}
    \frac{1-\sigma}{2} \norm{D_{x}^{\sigma/2}\phi}{L^{2}}^{2} = P_{c}(\phi). \label{eq:B_51}
  \end{align}
  by Lemma \ref{lem:Pohozaev}.
  Moreover, we have
  \begin{align}
    & \frac{1-\sigma}{2}\norm{D_{x}^{\sigma/2}\phi}{L^{2}}^{2} = \frac{1-\sigma}{2} \mu^{(\sigma - 1) / \sigma} \norm{D_{x}^{\sigma/2}v_{0}}{L^{2}}^{2} = \frac{1-\sigma}{\sigma}\mu^{(\sigma-1) / \sigma}j_{3}, \label{eq:B_52} \\
    & P_{c}(\phi) = \mu^{-1/\sigma} P_{c}(v_{0}) = \mu^{-1/\sigma}. \label{eq:B_53}
  \end{align}
  Combining \eqref{eq:B_51}, \eqref{eq:B_52}, and \eqref{eq:B_53} yields
  \begin{align}
    \mu = \left( \frac{1-\sigma}{\sigma} \right)^{-1} j_{3}^{-1}. \label{eq:B_54}
  \end{align}
  Using \eqref{eq:B_54}, we obtain
  \begin{align}
    S_{3}(\phi) = J_{3}(\phi) = \mu^{(\sigma-1) / \sigma} j_{3} = \left( \frac{1-\sigma}{\sigma} \right)^{(1-\sigma) / \sigma} j_{3} ^{1/\sigma}. \label{eq:B_55}
  \end{align}
  Next, let $u\in\mathcal{N}_{3}$.
  By the Pohozaev identity, we can see that 
  \begin{align}
    P_{c}(u) = \frac{1-\sigma}{2}\norm{D_{x}^{\sigma/2}u}{L^{2}}^{2} = \frac{1-\sigma}{\sigma} J_{3}(u)
  \end{align}
  and $P_{c}(u^{\alpha}) = 1$, where $\alpha := P_{c}(u)^{-1} > 0$ and $u^{\alpha}(x) := u(x/\alpha)$ for $x\in\bR$, which means $u^{\alpha}\in\mathcal{P}_{c}$.
  Moreover, we have
  \begin{align}
    J_{3}(u^{\alpha}) = \alpha^{1-\sigma} J_{3}(u) = P_{c}(u)^{\sigma-1} J_{3}(u) = \left( \frac{1-\sigma}{\sigma} \right)^{\sigma -1} J_{3}(u)^{\sigma},
  \end{align}
  which implies
  \begin{align}
    S_{3}(u) = J_{3}(u) = \left( \frac{1-\sigma}{\sigma} \right)^{(1-\sigma) / \sigma} J_{3}(u^{\alpha})^{1/\sigma}. \label{eq:B_56}
  \end{align}
  Finally, considering \eqref{eq:B_55}, \eqref{eq:B_56}, and the definition of $j_{3}$ yield $S_{3}(\phi) \leq S_{3}(u)$ for all $u\in\mathcal{N}_{3}$.
  This means that $\phi$ is a ground state of \eqref{eq:SP3}.
\end{proof}

\section{Properties of solutions} \label{section:properties}
In this section, we consider the regularity and positivity of solutions to \eqref{eq:SPj}.

First, we introduce a function which plays an important role in this section.
For $0<\sigma<2$ and $\nu>0$, we define the function $N_{\nu}^{\sigma}\colon \bR \rightarrow \bR$ as
\begin{align}
  N_{\nu}^{\sigma}(x) := \frac{1}{2\pi} \int_{\bR} \frac{1}{|\xi|^{\sigma} + \nu} e^{i\xi x} \, d\xi.
\end{align}

Properties of $N_{\nu}^{\sigma}$ have been investigated in \cite[Appendix A]{Frank-Lenzmann}.
Here we recall them.

\begin{lem}[Frank--Lenzmann\cite{Frank-Lenzmann}]
  The following properties hold:
  \begin{enumerate}
    \item $N_{\nu}^{\sigma} \in L^{r}(\bR)$ for $r \in \left( 1, 1/(1-\sigma)_{+} \right)$.
    \item $N_{\nu}^{\sigma}$ is positive, even, and strictly decreasing in $|x|$.
  \end{enumerate} \label{lem:properties_of_integral_kernel}
\end{lem}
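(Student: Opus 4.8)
The plan is to represent $N_\nu^\sigma$ by a subordination formula, which simultaneously makes the (only conditionally convergent) defining integral rigorous for $0<\sigma\le1$ and transfers every desired property from the much better understood fractional heat kernel. Writing $p_t^\sigma := \FT^{-1}[e^{-t|\xi|^\sigma}]$ for the kernel of $e^{-tD_x^\sigma}$ and using $\frac{1}{|\xi|^\sigma+\nu} = \int_0^\infty e^{-t\nu}e^{-t|\xi|^\sigma}\,dt$ for each fixed $\xi$, I would first establish
\begin{align}
  N_\nu^\sigma(x) = \int_0^\infty e^{-t\nu}\,p_t^\sigma(x)\,dt. \label{eq:subord}
\end{align}
Since $p_t^\sigma\ge0$ with $\norm{p_t^\sigma}{L^1} = \FT p_t^\sigma(0) = 1$, Tonelli's theorem gives $\int_0^\infty e^{-t\nu}\norm{p_t^\sigma}{L^1}\,dt = 1/\nu<\infty$, so the right-hand side of \eqref{eq:subord} is a genuine $L^1(\bR)$ function; applying $\FT$ and interchanging the order of integration (justified by Fubini) shows its symbol is $(|\xi|^\sigma+\nu)^{-1}$, so it coincides with the inverse Fourier transform that defines $N_\nu^\sigma$. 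This settles that $N_\nu^\sigma$ is well defined and real, and evenness is immediate because the symbol $e^{-t|\xi|^\sigma}$ is real and even, hence each $p_t^\sigma$ is even.

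For the positivity and strict monotonicity in property (2) I would push the corresponding properties of $p_t^\sigma$ through \eqref{eq:subord}. The cleanest route to those properties is Bochner subordination of Brownian motion: for $0<\sigma<2$ one has $p_t^\sigma(x) = \int_0^\infty g_s(x)\,\eta_t^{\sigma/2}(ds)$, where $g_s(x):=(4\pi s)^{-1/2}e^{-x^2/(4s)}$ is the Gaussian heat kernel and $\eta_t^{\sigma/2}$ is the nonnegative law of the $\sigma/2$-stable subordinator. Each $g_s$ is strictly positive and strictly decreasing in $|x|$, and $\eta_t^{\sigma/2}$ is a positive measure, so $p_t^\sigma$ is strictly positive and strictly decreasing in $|x|$ for every $t>0$. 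Integrating against the strictly positive weight $e^{-t\nu}\,dt$ preserves both: for $0\le|x_1|<|x_2|$ we have $p_t^\sigma(x_1)>p_t^\sigma(x_2)$ for all $t$, hence $N_\nu^\sigma(x_1)>N_\nu^\sigma(x_2)$ and $N_\nu^\sigma>0$ after integration.

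For the $L^r$ bound in property (1) I would combine the scaling $p_t^\sigma(x)=t^{-1/\sigma}p_1^\sigma(t^{-1/\sigma}x)$, read off from $\FT p_t^\sigma(\xi)=\FT p_1^\sigma(t^{1/\sigma}\xi)$, with Minkowski's integral inequality applied to \eqref{eq:subord}. A change of variables gives $\norm{p_t^\sigma}{L^r}=t^{(1-r)/(\sigma r)}\norm{p_1^\sigma}{L^r}$, and $p_1^\sigma\in L^1\cap L^\infty\subset L^r$ for every $r\in[1,\infty]$ (it is a bounded probability density), so $\norm{p_1^\sigma}{L^r}<\infty$. Then
\begin{align}
  \norm{N_\nu^\sigma}{L^r} \le \int_0^\infty e^{-t\nu}\norm{p_t^\sigma}{L^r}\,dt = \norm{p_1^\sigma}{L^r}\int_0^\infty e^{-t\nu}\,t^{(1-r)/(\sigma r)}\,dt,
\end{align}
and the last integral converges at $\infty$ automatically and at $0$ exactly when $(1-r)/(\sigma r)>-1$, i.e. $r(1-\sigma)<1$, which is precisely $r<1/(1-\sigma)_+$; together with $r>1$ this is the claimed range.

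The main obstacle is not the transfer mechanism, which is routine once \eqref{eq:subord} is in hand, but securing the two nontrivial analytic inputs. The first is justifying \eqref{eq:subord} distributionally in the regime $0<\sigma\le1$, where $(|\xi|^\sigma+\nu)^{-1}\notin L^1(\bR)$ and $N_\nu^\sigma$ must be read as the inverse Fourier transform of a tempered distribution; the Tonelli/Fubini argument above resolves this, but it is the step that needs the most care. The second, and genuinely deeper, input is the strict unimodality of $p_t^\sigma$. I would obtain it from the Gaussian-mixture (subordination) representation, whose only non-elementary ingredient is the positivity of the $\sigma/2$-stable subordinator density; alternatively one may invoke the classical unimodality theory for stable laws, or simply cite \cite{Frank-Lenzmann}, to whom the statement is attributed.
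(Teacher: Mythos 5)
The paper contains no proof of this lemma: it is stated purely as a quotation of \cite[Appendix A]{Frank-Lenzmann}, so the only thing to compare your argument against is the cited source itself. Your proposal is correct, and it is in effect a self-contained reconstruction of the standard argument behind that citation. The chain you build --- the resolvent as a Laplace transform of the fractional heat semigroup, $N_\nu^\sigma(x)=\int_0^\infty e^{-t\nu}p_t^\sigma(x)\,dt$, transfer of positivity, evenness and strict unimodality from the symmetric stable density $p_t^\sigma$ (obtained via Bochner subordination to the Gaussian), and the scaling identity $\norm{p_t^\sigma}{L^r}=t^{(1-r)/(\sigma r)}\norm{p_1^\sigma}{L^r}$ combined with Minkowski's integral inequality --- is sound at every step. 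In particular, the Tonelli/Fubini argument correctly resolves the regime $0<\sigma\le 1$, where $(|\xi|^\sigma+\nu)^{-1}\notin L^1(\bR)$ and the defining integral must be read as the inverse Fourier transform of a tempered distribution (for $1<\sigma<2$ the symbol is already in $L^1(\bR)$ and no such care is needed); the strict inequality $g_s(x_1)>g_s(x_2)$ for $0\le|x_1|<|x_2|$ does survive integration against the nonzero nonnegative subordinator law and the positive weight $e^{-t\nu}$; and the convergence condition $(1-r)/(\sigma r)>-1$ at $t=0$ is exactly $r(1-\sigma)<1$, i.e.\ $r<1/(1-\sigma)_+$. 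Two minor remarks: your argument actually yields the slightly stronger range $r\in[1,1/(1-\sigma)_+)$, and the one genuine black box you invoke is Bernstein's theorem (existence and positivity of the $\sigma/2$-stable subordinator law), which is classical. What your write-up buys over the paper's bare citation is self-containedness; what the citation buys is brevity, since Frank--Lenzmann carry out essentially this same subordination analysis, including the sharp pointwise decay estimates that your $L^r$ computation deliberately avoids needing.
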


\subsection{Regularity of solutions}
Here we consider the regularity results of solutions to \eqref{eq:SPj}.

\begin{prop}
  Let $\phi\in H^{\sigma/2}(\bR)$ be a solution to \eqref{eq:SPj}.
  Then we obtain $\phi\in H^{\sigma +1}(\bR)$. \label{prop:regularity_of_solution}
\end{prop}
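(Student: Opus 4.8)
The plan is to combine the convolution representation of $\phi$ induced by the integral kernel $N_{c}^{\sigma}$ with a two-stage bootstrap: first improving the Lebesgue integrability of $\phi$ all the way to $L^{\infty}(\bR)$, and then lifting the Sobolev regularity from $H^{\sigma/2}(\bR)$ to $H^{\sigma+1}(\bR)$. The starting point is to rewrite \eqref{eq:SPj} as $(D_{x}^{\sigma}+c)\phi = f_{j}(\phi)$ and invert the multiplier $|\xi|^{\sigma}+c$ on the Fourier side, which yields
\begin{align}
  \phi = N_{c}^{\sigma} * f_{j}(\phi).
\end{align}
This identity is legitimate: by the Sobolev embedding $\phi\in L^{r}(\bR)$ for every $r\in[2,2^{\ast}_{\sigma})$, so $f_{j}(\phi)$, which is bounded pointwise by a constant times $|\phi|^{p}+|\phi|^{q}$ with $q+1<2^{\ast}_{\sigma}$, lies in $L^{b}(\bR)$ for some $b>1$, while Lemma \ref{lem:properties_of_integral_kernel} gives $N_{c}^{\sigma}\in L^{r}(\bR)$ for $r\in(1,1/(1-\sigma)_{+})$.

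The first and main stage is to prove $\phi\in L^{\infty}(\bR)$, and this is where I expect the real work to be, specifically in the range $0<\sigma<1$. When $\sigma\geq1$ the kernel $N_{c}^{\sigma}$ belongs to $L^{r}(\bR)$ for \emph{all} $r\in(1,\infty)$, so a single application of Young's inequality with a suitable conjugate pair produces $\phi\in L^{\infty}(\bR)$ directly. When $0<\sigma<1$ the kernel is only in $L^{r}(\bR)$ for $r<1/(1-\sigma)$, so I would iterate: if $\phi\in L^{a}(\bR)$ for $a<A$, then $f_{j}(\phi)\in L^{b}(\bR)$ for $b<A/q$, and Young's inequality with $N_{c}^{\sigma}\in L^{r}(\bR)$ (taking $1/r'$ close to $\sigma$) upgrades $\phi$ to $L^{a'}(\bR)$ with $1/a'=1/b-1/r'$, i.e.\ roughly $1/a'\approx q/A-\sigma$. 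The crucial point is that the integrability gain is \emph{strictly} positive at each step: the hypothesis $q<2^{\ast}_{\sigma}-1=(1+\sigma)/(1-\sigma)$ is exactly equivalent to $q-1<2\sigma/(1-\sigma)$, which forces the new exponent to exceed the old one starting from $A=2^{\ast}_{\sigma}$. After finitely many iterations $A$ surpasses $q/\sigma$, at which point $f_{j}(\phi)\in L^{b}(\bR)$ for some $b\geq r'$ and a final Young estimate gives $\phi\in L^{\infty}(\bR)$. Verifying that this recursion genuinely terminates, and bookkeeping the simultaneous constraints coming from the two powers $p$ and $q$, is the principal obstacle.

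For the second stage I would run a Sobolev bootstrap using the representation together with the mapping property of $(D_{x}^{\sigma}+c)^{-1}$: since $|\xi|^{\sigma}+c\gtrsim\langle\xi\rangle^{\sigma}$, convolution with $N_{c}^{\sigma}$ raises Sobolev regularity by order $\sigma$, i.e.\ it sends $H^{s}(\bR)$ into $H^{s+\sigma}(\bR)$. Because $\phi\in L^{\infty}(\bR)$ and each $f_{j}$ is $C^{1}$ with $f_{j}(0)=0$, composition with $f_{j}$ preserves regularity up to order one: a Lipschitz/chain-rule estimate on the bounded range of $\phi$ gives $f_{j}(\phi)\in H^{\min(s,1)}(\bR)$ whenever $\phi\in H^{s}(\bR)\cap L^{\infty}(\bR)$. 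Feeding this into the representation, the regularity $t$ of $\phi$ improves according to $t\mapsto\min(t,1)+\sigma$. Starting from $t=\sigma/2$ this increases by $\sigma$ at each step until $t\geq1$, after which one further application yields $f_{j}(\phi)\in H^{1}(\bR)$ and hence $\phi\in H^{1+\sigma}(\bR)$; this is a fixed point of the iteration, consistent with the claimed endpoint. Therefore $\phi\in H^{\sigma+1}(\bR)$, completing the proof.
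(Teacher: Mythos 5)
Your proposal is correct, and its skeleton matches the paper's: write $\phi = C_{0}N_{c}^{\sigma}\ast f_{j}(\phi)$, establish $\phi\in L^{\infty}(\bR)$ first, then bootstrap Sobolev regularity and finish at $H^{\sigma+1}(\bR)$ with a chain-rule estimate. But the execution differs in both stages, and the differences matter. For the $L^{\infty}$ step with $0<\sigma<1$, the paper uses a \emph{single} Young inequality, choosing $r_{1}=\frac{2}{2-p(1-\sigma)}$ so that $\frac{1}{r_{1}}+\frac{p}{2^{\ast}_{\sigma}}=1$ and pairing $N_{c}^{\sigma}\in L^{r_{1}}$ against $|\phi|^{p}\in L^{2^{\ast}_{\sigma}/p}$; your scheme instead iterates in the Lebesgue scale. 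Your route is the more robust one: the paper's exponent $r_{1}$ lies in the admissible window $(1,1/(1-\sigma))$ of Lemma \ref{lem:properties_of_integral_kernel} only when $p<2\sigma/(1-\sigma)$, which is strictly stronger than the standing hypothesis $p<2^{\ast}_{\sigma}-1=(1+\sigma)/(1-\sigma)$ (for instance $\sigma=1/2$, $p=5/2$ gives $r_{1}=8/3>2=1/(1-\sigma)$, outside the range where the kernel is integrable), whereas the strict gain per step in your iteration is, as you check, \emph{exactly} equivalent to $q-1<2\sigma/(1-\sigma)$, i.e.\ to $q<2^{\ast}_{\sigma}-1$, and your termination argument (the only finite fixed point $(q-1)/\sigma$ of the recursion lies below the starting exponent $2^{\ast}_{\sigma}$) is sound; the price is the two-power bookkeeping you flag yourself, which is harmless because the $p$-term, having the smaller exponent, admits its own Young pair whenever the $q$-term does. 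For the second stage, the paper first proves $\phi\in H^{1}(\bR)$ (for $0<\sigma<1$ by iterating a Fourier-multiplier estimate that gains $\sigma/2$ derivatives per step) and then makes one final step to $H^{\sigma+1}(\bR)$ via the chain rule; your iteration $t\mapsto\min(t,1)+\sigma$, based on the smoothing property of $(D_{x}^{\sigma}+c)^{-1}$ and the composition estimate $f_{j}\colon H^{s}\cap L^{\infty}\to H^{\min(s,1)}$ (Lipschitz on the bounded range of $\phi$, via the Gagliardo seminorm for $s<1$ and the ordinary chain rule at $s=1$), is the same idea packaged more cleanly, and capping the gain at order one is the right safeguard since $f_{j}$ is only $C^{1}$. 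Net comparison: the paper's one-shot exponent choice is shorter where it applies, while your iterative version is slightly longer but valid on the whole exponent range $1<p<q<2^{\ast}_{\sigma}-1$.
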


The same statement for \eqref{eq:SP0} with single power nonlinearities is proven by Frank--Lenzmann\cite[Appendix B]{Frank-Lenzmann}.
We can show Proposition \ref{prop:regularity_of_solution} with some modifications.

\begin{proof}
  Before proceeding, we remark that taking the Fourier transform to both sides of \eqref{eq:SPj}, we have
  \begin{align}
    \hat{\phi}(\xi) = \frac{C_{0}}{|\xi|^{\sigma} + c} \FT [f_{j}(\phi)](\xi) \label{eq:AppB_02}
  \end{align}
  with some constant $C_{0}>0$.

  First, we show $\phi\in H^{1}(\bR)$.
  The proof of this statement depends on the value of $\sigma$.

  \underline{Case 1. $1 < \sigma <2$:}
  First, we easily see that $\phi\in L^{\infty}(\bR)$ by the Sobolev embedding.
  Then using \eqref{eq:AppB_02}, we have
  \begin{align}
    \norm{D_{x}^{\sigma}\phi}{L^{2}} = \norm{|\xi|^{\sigma}\hat{\phi}}{L^{2}} &= C_{0} \left\| \frac{|\xi|^{\sigma}}{|\xi|^{\sigma}+c} \FT [f_{j}(\phi)] \right\|_{L^{2}} \\
    &\leq C_{0} \left( \norm{\FT [ |\phi|^{p-1}\phi ]}{L^{2}} + \norm{\FT [|\phi|^{q-1}\phi]}{L^{2}} \right) \\
    &\leq C_{0} \left( \norm{\phi}{L^{\infty}}^{(p-1)/2} + \norm{\phi}{L^{\infty}}^{(q-1)/2} \right) \norm{\phi}{H^{\sigma/2}},
  \end{align}
  which implies $\phi\in H^{\sigma}(\bR) \hookrightarrow H^{1}(\bR)$.

  \underline{Case 2. $\sigma =1$:}
  By \eqref{eq:SPj} and the Sobolev embedding $H^{1/2}(\bR) \hookrightarrow L^{r}(\bR)$ for $r\in[2,\infty)$, we have
  \begin{align}
    \norm{|\xi|\phi}{L^{2}} = \norm{D_{x}\phi}{L^{2}} &= \norm{-c \phi + f_{j}(\phi)}{L^{2}} \leq c \norm{\phi}{L^{2}} + \norm{\phi}{L^{2p}}^{p} + \norm{\phi}{L^{2q}}^{q} \\
    &\leq C (1 + \norm{\phi}{H^{1/2}}^{p-1} + \norm{\phi}{H^{1/2}}^{q-1}) \norm{\phi}{H^{1/2}},
  \end{align}
  where $C>0$ is a constant dependent on $c, \ p$, and $q$.
  This concludes $\phi\in H^{1}(\bR)$.

  \underline{Case 3. $0 < \sigma < 1$:}
  First, we claim that there exists some $r_{1}, r_{2}\in (1,1/(1-\sigma))$ which satisfy
  \begin{align}
    \frac{1}{r_{1}} + \frac{p}{2^{\ast}_{\sigma}} = \frac{1}{r_{1}} + \frac{p(1-\sigma)}{2} = 1, \label{eq:AppB_03} \\
    \frac{1}{r_{2}} + \frac{q}{2^{\ast}_{\sigma}} = \frac{1}{r_{2}} + \frac{q(1-\sigma)}{2} = 1. \label{eq:AppB_04}
  \end{align}
  Indeed, since $1 < p < q < 2^{\ast}_{\sigma}-1$, we can see that
  \begin{align}
    1 < \frac{2}{p(1-\sigma)}, \ \frac{2}{q(1-\sigma)} < \frac{2}{1-\sigma} = 2^{\ast}_{\sigma}.
  \end{align}
  Now, we put
  \begin{align}
    r_{1} := \left( 1 - \frac{p(1-\sigma)}{2} \right)^{-1} = \frac{2}{2-p(1-\sigma)}.
  \end{align}
  Then by $0 < \sigma < 1$ and $p < 2^{\ast}_{\sigma} -1$, we can see that $r_{1} \in (1,1/(1-\sigma))$.
  We can take some $r_{2}$ in the same way as taking $r_{1}$.

  By the way, we can write a solution $\phi$ of \eqref{eq:SPj} as $\phi = C_{0} N^{\sigma}_{c} \ast f_{j}(\phi)$ with some constant $C_{0}>0$.
  Therefore, combining this and \eqref{eq:AppB_03}, \eqref{eq:AppB_04}, we obtain
  \begin{align}
    \norm{\phi}{L^{\infty}} = C_{0} \norm{N_{c}^{\sigma} \ast f_{j}(\phi)}{L^{\infty}} &\leq C_{0} \left( \norm{N_{c}^{\sigma} \ast |\phi|^{p-1}|\phi|}{L^{\infty}} + \norm{N_{c}^{\sigma} \ast |\phi|^{q-1}|\phi|}{L^{\infty}}  \right) \\
    &\leq C_{0} \left( \norm{N_{c}^{\sigma}}{L^{r_{1}}} + \norm{N_{c}^{\sigma}}{L^{r_{2}}} \right) \norm{\phi}{L^{2^{\ast}_{\sigma}}} \\
    &\leq C \left( \norm{N_{c}^{\sigma}}{L^{r_{1}}} + \norm{N_{c}^{\sigma}}{L^{r_{2}}} \right) \norm{\phi}{H^{\sigma/2}},
  \end{align}
  where $C>0$ depends on $\sigma$.
  Hence, $\phi\in L^{\infty}(\bR)$ follows from this inequality and Lemma \ref{lem:properties_of_integral_kernel}.

  Next, we show $\phi\in H^{1}(\bR)$.
  Using \eqref{eq:AppB_02}, we obtain
  \begin{align}
    \norm{D_{x}^{(k+1)\sigma/2}\phi}{L^{2}} &= C_{0} \left\| \frac{|\xi|^{(k+1)\sigma/2}}{|\xi|^{\sigma}+c} \FT[f_{j}(\phi)] \right\|_{L^{2}} \\
    &\leq C \left( \left\| |\xi|^{k\sigma/2} \FT[|\phi|^{p-1}\phi] \right\|_{L^{2}} + \left\| |\xi|^{k\sigma/2} \FT[|\phi|^{q-1}\phi] \right\|_{L^{2}} \right) \\
    &\leq C \left( \norm{\phi}{L^{\infty}}^{(p-1)/2} + \norm{\phi}{L^{\infty}}^{(q-1)/2} \right) \norm{|\xi|^{k\sigma/2}\hat{\phi}}{L^{2}} \\
    &\leq C \norm{D_{x}^{k\sigma/2}\phi}{L^{2}} \label{eq:AppB_05}
  \end{align}
  for all $k\in\mathbb{N}$, where $C>0$ depends on $\sigma, \ p$, and $q$.
  Here, we take some $n\in\mathbb{N}$ such that $1/(n+1)\leq \sigma/2 < 1/n$.
  Then using \eqref{eq:AppB_05} infinitely many times, we have $\norm{D_{x}^{(n+1)\sigma/2}}{L^{2}} \leq C \norm{D_{x}^{\sigma/2}\phi}{L^{2}} \leq C \norm{\phi}{H^{\sigma/2}}$, which implies $\phi\in H^{(n+1)\sigma/2}(\bR) \hookrightarrow H^{1}(\bR)$.

  Finally, we show that $\phi\in H^{\sigma + 1}(\bR)$.
  Using \eqref{eq:AppB_02}, we have
  \begin{align}
    \norm{D_{x}^{\sigma+1}\phi}{L^{2}} = \norm{|\xi|^{\sigma+1}\phi}{L^{2}} &\leq \norm{|\xi|\FT[|\phi|^{p-1}\phi]}{L^{2}} + \norm{|\xi|\FT[|\phi|^{q-1}\phi]}{L^{2}} \\
    &= \norm{\partial_{x}(|\phi|^{p-1}\phi)}{L^{2}} + \norm{\partial_{x}(|\phi|^{p-1}\phi)}{L^{2}} \\
    &\leq \left( p\norm{\phi}{L^{\infty}}^{(p-1)/2} + q\norm{\phi}{L^{\infty}}^{(q-1)/2} \right) \norm{\phi}{H^{1}},
  \end{align}
  which means $\phi \in H^{\sigma+1}(\bR)$.
  This completes the proof.
\end{proof}

\subsection{Positivity of solutions}

In this subsection, we consider the positivity of solutions to \eqref{eq:SPj}.
\begin{prop}
  Let $\phi\in H^{\sigma/2}(\bR)$ be a nonnegative solution to \eqref{eq:SPj}.
  Then $\phi$ is strictly positive. \label{prop:positiveness_of_GS}
\end{prop}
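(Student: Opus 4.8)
The plan is to exploit the integral representation of solutions through the strictly positive convolution kernel $N_{\nu}^{\sigma}$ of Lemma \ref{lem:properties_of_integral_kernel}, combined with the boundedness of $\phi$. I may assume $\phi\not\equiv 0$, since the conclusion is vacuous otherwise. By Proposition \ref{prop:regularity_of_solution} the solution satisfies $\phi\in H^{\sigma+1}(\bR)$, hence $\phi\in L^{\infty}(\bR)\cap C(\bR)$ because $H^{\sigma+1}(\bR)\hookrightarrow C(\bR)$ for $\sigma>0$; put $R:=\norm{\phi}{L^{\infty}}<\infty$. The genuine obstacle is that $f_{j}$ need not be sign-definite on $[0,R]$ (for $j=1,3$ it changes sign), so $f_{j}(\phi)$ is not obviously nonnegative and cannot be fed directly into the positive kernel.

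To remove this obstruction I would absorb a large linear term into both sides of the equation. Since $|f_{j}(s)|\le s^{p}+s^{q}\le(R^{p-1}+R^{q-1})s$ for $s\in[0,R]$, any fixed constant $M>R^{p-1}+R^{q-1}$ makes $g(s):=f_{j}(s)+Ms$ satisfy $g(0)=0$ and $g(s)>0$ for $s\in(0,R]$. As $0\le\phi\le R$ almost everywhere, it follows that $g(\phi)\ge 0$ a.e.\ and $g(\phi)>0$ precisely on the positive-measure set $\{\phi>0\}$, which is nonempty because $\phi\not\equiv 0$. Rewriting \eqref{eq:SPj} as $(D_{x}^{\sigma}+(c+M))\phi=g(\phi)$ and inverting the operator, whose Fourier symbol $(|\xi|^{\sigma}+c+M)^{-1}$ is exactly that of $N_{c+M}^{\sigma}$, yields the representation
\begin{align}
  \phi = C_{0}\, N_{c+M}^{\sigma} * g(\phi)
\end{align}
for some $C_{0}>0$, exactly as in the proof of Proposition \ref{prop:regularity_of_solution}. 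The convolution is well defined because $g(\phi)\in L^{2}(\bR)\cap L^{\infty}(\bR)$ (as $\phi\in L^{2}\cap L^{\infty}$ and $g$ is polynomial) and $N_{c+M}^{\sigma}\in L^{r}(\bR)$ for $r\in\left(1,1/(1-\sigma)_{+}\right)$ by Lemma \ref{lem:properties_of_integral_kernel}, so a Hölder pairing bounds the integral pointwise.

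Finally, Lemma \ref{lem:properties_of_integral_kernel}(2) gives $N_{c+M}^{\sigma}(z)>0$ for every $z\in\bR$. Hence, for each fixed $x\in\bR$, the integrand $N_{c+M}^{\sigma}(x-y)\,g(\phi(y))$ is nonnegative and strictly positive on $\{\phi>0\}$, so that, using the continuous representative of $\phi$,
\begin{align}
  \phi(x) = C_{0}\int_{\bR} N_{c+M}^{\sigma}(x-y)\,g(\phi(y))\,dy > 0.
\end{align}
This proves $\phi>0$ everywhere. Thus the whole argument reduces, after the shift $f_{j}(s)\mapsto f_{j}(s)+Ms$, to the elementary fact that convolving a strictly positive kernel with a nonnegative, not-identically-zero function yields a strictly positive function; the only step requiring care is choosing $M$ large enough to restore nonnegativity of the nonlinear term on the range of $\phi$.
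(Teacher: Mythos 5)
Your proof is correct and follows essentially the same route as the paper: both absorb a large linear term ($M\phi$ in your notation, $\lambda_{1}\phi$ in the paper's) so that the shifted nonlinearity is nonnegative on the range of $\phi$, then invert $D_{x}^{\sigma}+(c+M)$ via the strictly positive kernel $N_{c+M}^{\sigma}$ of Lemma \ref{lem:properties_of_integral_kernel} and read off positivity from the convolution representation. The only cosmetic differences are that the paper argues by contradiction at a hypothetical zero $x_{0}$ (and treats only $j=1$ explicitly), whereas you conclude positivity directly at every point and handle all $j$ at once with the uniform bound $|f_{j}(s)|\leq s^{p}+s^{q}$.
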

\begin{proof}
  For simplicity, we only consider \eqref{eq:SP1}.
  We can prove this for the remaining cases in almost the same way.
  Let $\phi\in H^{\sigma/2}(\bR)$ be a nonnegative solution of \eqref{eq:SP1}, and we can see that $\phi\in H^{\sigma + 1}(\bR)$ from Proposition \ref{prop:regularity_of_solution}, which implies $\phi\in L^{\infty}(\bR)$ and $\phi$ is continuous in $\bR$.
  Therefore, we put
  \begin{align}
    \lambda_{1} := -\min_{x\in\bR} (|\phi(x)|^{p-1}-|\phi(x)|^{q-1}) + 1,
  \end{align}
  so that we obtain $ \lambda_{1} + |\phi(y)|^{p-1} - |\phi(y)|^{q-1} > 0 $ for all $y\in\bR$.
  Then by adding $\lambda_{1}\phi$ to both sides of \eqref{eq:SP1} and transforming, we obtain
  \begin{align}
    \phi(x) = C \int_{\bR} N_{c+\lambda_{1}}^{\sigma}(x-y)(\lambda_{1} + |\phi(y)|^{p-1} - |\phi(y)|^{q-1})\phi(y) \ dy. \label{eq:C_02}
  \end{align}
  Then we can see that $\phi>0$ in $\bR$.
  Indeed, if there exists $x_{0}\in\bR$ such that $\phi(x_{0})=0$, then,  by \eqref{eq:C_02}, we have
  \begin{align}
    \int_{\bR} N_{c+\lambda_{1}}^{\sigma}(x_{0}-y)(\lambda_{1} + |\phi(y)|^{p-1} - |\phi(y)|^{q-1})\phi(y) \ dy = 0. \label{eq:C_03}
  \end{align}
  Since $N_{c+\lambda_{1}}^{\sigma}(x_{0}-y)(\lambda_{1} + |\phi(y)|^{p-1} - |\phi(y)|^{q-1}) > 0$ for all $y\in\bR$, we obtain $\phi\equiv 0$ in $\bR$ from \eqref{eq:C_03}, which contradicts to that $\phi$ is nontrivial.
  This concludes the proof.
\end{proof}

By Proposition \ref{prop:positiveness_of_GS}, we can obtain more properties of ground states.
\begin{prop}
  Set $j=1, \ 2$.
  Then there do not exist any ground states of \eqref{eq:SPj} with sign changes. \label{prop:no_sign_changes}
\end{prop}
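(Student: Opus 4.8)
The plan is to argue by contradiction: I assume that some $\phi\in\mathcal{G}_{j}$ changes sign and derive a contradiction by showing that $|\phi|$ would then have to be a strictly positive ground state, which is incompatible with $\phi$ vanishing somewhere. I treat $j=1$ in detail; the case $j=2$ is identical, since $F_{2}$ likewise depends on $u$ only through $|u|$.

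The structural observation underlying the whole argument is that in $S_{1}$, $K_{1}$, and $I_{1}$ every term except the kinetic energy $\norm{D_{x}^{\sigma/2}u}{L^{2}}^{2}$ depends on $u$ only through $|u|$. Combined with the inequality \eqref{eq:A_101}, namely $\norm{D_{x}^{\sigma/2}|\phi|}{L^{2}} \leq \norm{D_{x}^{\sigma/2}\phi}{L^{2}}$, this immediately yields
\begin{align}
  S_{1}(|\phi|) \leq S_{1}(\phi), \quad K_{1}(|\phi|) \leq K_{1}(\phi), \quad I_{1}(|\phi|) \leq I_{1}(\phi).
\end{align}

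Next, mirroring the proof of Lemma \ref{prop:even_ground_state_of_SP1_2} with $|\phi|$ in place of $\phi^{\ast}$, I would show that $|\phi|\in\mathcal{G}_{1}$. Since $\phi\in\mathcal{G}_{1}=\mathcal{M}_{1}$ by Lemma \ref{lem:relation_G_and_M}, we have $S_{1}(\phi)=d_{1}$ and $K_{1}(\phi)=0$, so the displayed inequalities give $K_{1}(|\phi|)\leq 0$ and $I_{1}(|\phi|)\leq d_{1}$. The latter together with Lemma \ref{lem:negativeness_of_I1} forces $K_{1}(|\phi|)\geq 0$, hence $K_{1}(|\phi|)=0$ and $|\phi|\in\mathcal{K}_{1}$; then $S_{1}(|\phi|)\leq d_{1}$ and the definition of $d_{1}$ give $S_{1}(|\phi|)=d_{1}$, i.e.\ $|\phi|\in\mathcal{M}_{1}=\mathcal{G}_{1}\subset\mathcal{N}_{1}$. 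In particular $|\phi|$ is a nonnegative solution to \eqref{eq:SP1}, so Proposition \ref{prop:positiveness_of_GS} yields $|\phi|>0$ on all of $\bR$.

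Finally I would close the argument. By Proposition \ref{prop:regularity_of_solution} the ground state $\phi$ lies in $H^{\sigma+1}(\bR)$ and is therefore continuous; since it changes sign, the intermediate value theorem produces $x_{0}\in\bR$ with $\phi(x_{0})=0$, whence $|\phi|(x_{0})=0$, contradicting $|\phi|>0$. The only place where genuine care is needed is the middle step: one cannot directly assert that $|\phi|$ solves \eqref{eq:SP1}, because taking absolute values does not commute with $D_{x}^{\sigma}$, so the fact that $|\phi|$ is a solution must be recovered from the variational characterization $\mathcal{G}_{1}=\mathcal{M}_{1}$ rather than from the pointwise equation. Everything else is a routine consequence of the rearrangement inequality and the results already established.
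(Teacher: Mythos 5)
Your proposal is correct and follows essentially the same route as the paper: assume a sign-changing $\phi\in\mathcal{G}_{1}$, use the rearrangement-type inequality \eqref{eq:A_101} together with the argument of Lemma \ref{prop:even_ground_state_of_SP1_2} (via Lemmas \ref{lem:relation_G_and_M} and \ref{lem:negativeness_of_I1}) to conclude $|\phi|\in\mathcal{G}_{1}$, then invoke Proposition \ref{prop:positiveness_of_GS} to get $|\phi|>0$ and contradict the existence of a zero of $\phi$. In fact your write-up is slightly more careful than the paper's, since you make explicit both the variational (rather than pointwise) justification that $|\phi|$ solves \eqref{eq:SP1} and the continuity of $\phi$ needed to produce the zero via the intermediate value theorem.
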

\begin{proof}
  For simplicity, we only consider the case $j=1$.
  We show this corollary with contradiction.

  Assume that there exists $\phi\in\mathcal{G}_{1}$ with sign changes.
  Then we can see that $|\phi|\in\mathcal{G}_{1}$ similarly to Proposition \ref{prop:even_ground_state_of_SP1_2} by using \eqref{eq:A_101}.
  Therefore, by Proposition \ref{prop:positiveness_of_GS}, $|\phi|$ is strictly positive.
  However, by the assumption of $\phi$, there exists $x_{0}\in\bR$ such that $|\phi(x_{0})|=0$.
  This is contradiction.
\end{proof}

Here we conclude the proof of Theorems \ref{Thm:SP1_2} and \ref{Thm:SP3}.
Theorem \ref{Thm:SP1_2} follows from Propositions \ref{prop:existense_of_gs_12}, \ref{prop:regularity_of_solution}, \ref{prop:positiveness_of_GS}, and \ref{prop:no_sign_changes}.
Theorem \ref{Thm:SP3} follows from Propositions \ref{prop:existence_of_ground_state_of_SP3}, \ref{prop:regularity_of_solution}, and \ref{prop:positiveness_of_GS}.

\section{Application and specific phenomenon} \label{section:application}

In this section, we consider Theorem \ref{thm:classification}.
Here we recall the equation \eqref{eq:SP}:
\begin{align}
  D_{x}\phi + c\phi + \phi^{p} - \phi^{q} = 0, \quad x\in\bR, \tag{SP} \label{eq:SP}
\end{align}
where $c>0, \ p, \ q \in \mathbb{N}, \ 2 \leq p < q$.
The action functional corresponding to \eqref{eq:SP} is
\begin{align}
  S(u) = \frac{1}{2}\norm{D_{x}^{1/2}u}{L^{2}}^{2} + \frac{c}{2}\norm{u}{L^{2}}^{2} + \frac{1}{p+1}\int_{\bR} u^{p+1} \, dx - \frac{1}{q+1} \int_{\bR} u^{q+1} \, dx.
\end{align}

The existence of ground states is obtained by the method in \S \ref{subsection:Nehari} with certain corrections.
The remaining statements of the non/existence come from the main results of this paper.
For example, in Case (\ref{case:even_even}), a positive ground state of \eqref{eq:SP1} becomes also a positive solution to \eqref{eq:SP}.
Furthermore, if the assumptions of Theorem \ref{Thm:SP3} are satisfied, we can find a negative solution to \eqref{eq:SP} as below;
let $v\in H^{1/2}(\bR)$ be a positive solution to \eqref{eq:SP3} and put $u := -v$.
Then $u$ becomes a negative solution to \eqref{eq:SP}.

In the following, we prove the statement in Case (\ref{case:even_odd}) that none of the positive solutions is a ground state.

\begin{proof}

  First, we recall that $\phi_{1}\in\mathcal{G}_{1}$ attains $d_{1} = \inf_{v\in\mathcal{K}_{1}}S_{1}(v)$.
  Moreover, we define notations corresponding to \eqref{eq:SP} which are similar to those in \S\ref{subsection:Nehari}:
  \begin{align}
    &\mathcal{N} := \set{v\in H^{1/2}(\bR) \setminus \{ 0 \}}{S'(v)=0}, \\
    &K(u) := \dualC{S'(u)}{u} = \norm{D_{x}^{1/2}u}{L^{2}}^{2} + c\norm{u}{L^{2}}^{2} + \int_{\bR} u^{p+1} \, dx - \int_{\bR} u^{q+1} \, dx, \\
    &\mathcal{K} := \set{ v\in H^{1/2}(\bR)\setminus \{ 0 \} }{ K(v) = 0 }, \quad d := \inf_{v\in\mathcal{K}}S(v), \\
    &\mathcal{M} := \set{ v\in H^{1/2}(\bR) }{ S(v) = d }.
  \end{align}
  Then as in Lemma \ref{lem:relation_G_and_M}, we can see that $\mathcal{M}$ coincides with the set of ground states of \eqref{eq:SP}, which we denote $\mathcal{G}$ in the following.
  Next, we put
  \begin{align}
    \mathcal{A} := \set{v\in\mathcal{N}}{v>0 \ \text{in} \ \bR}, \quad a := \inf_{v\in\mathcal{A}}S(v).
  \end{align}
  To prove the statement, it suffices to see that $d < a$.

  First, we show that $d < d_{1}$.
  Here, we let $\phi\in\mathcal{G}$ and take some $\phi_{1}\in\mathcal{G}_{1}$ which is positive.
  Moreover, we put $\psi_{1}:=-\phi_{1}$ so that $\psi_{1}$ becomes a negative solution to \eqref{eq:SP}.
  Therefore, by the definitions of $d$ and $d_{1}$, we obtain
  \begin{align}
    d = S(\phi) \leq S(\psi_{1}) < S_{1}(\psi_{1}) = S_{1}(\phi_{1}) = d_{1}.
  \end{align}

  Next, we prove that $d_{1} \leq a$ holds.
  Here we take some $\phi_{1}\in\mathcal{G}_{1}$.
  In addition, for each $v\in\mathcal{A}$, we put $w:=-v$ so that $w$ becomes a solution to \eqref{eq:SP1}.
  Therefore,
  \begin{align}
    d_{1} = S_{1}(\phi_{1}) \leq S_{1}(w) = S_{1}(v) = S(v).
  \end{align}
  holds for all $v\in\mathcal{A}$, which implies $d_{1} \leq a$.
  This completes the proof.
\end{proof}

\appendix
\section{Proof of Lemma \ref{lem:Lieb}} \label{appendix:proof_of_Lieb}

Before starting the proof, we introduce the Sobolev space $\tilde{H}^{\sigma/2}(\Omega)$, where $\Omega$ is an open subset of $\bR$.
$\tilde{H}^{\sigma/2}(\Omega)$ is defined as
\begin{align}
  \tilde{H}^{\sigma/2}(\Omega) := \SET{ v\in L^{2}(\Omega) }{ \frac{|v(x)-v(y)|}{|x-y|^{\frac{1}{2} + \frac{\sigma}{2}}}\in L^{2}(\Omega) }
\end{align}
with a norm
\begin{align}
  \norm{u}{\tilde{H}^{\sigma/2}(\Omega)}^{2} := \norm{u}{L^{2}(\Omega)}^{2} + \iint_{\Omega\times\Omega} \frac{|u(x)-u(y)|^{2}}{|x-y|^{1+\sigma}} \, dxdy.
\end{align}
Then we can see that $\tilde{H}^{\sigma/2}(\bR) = H^{\sigma/2}(\bR)$ and $\norm{\cdot}{\tilde{H}^{\sigma/2}(\bR)} \sim \norm{\cdot}{H^{\sigma/2}}$.
For details, see \cite{HitchhikersOfFractionalSobolev}.

\begin{proof}[Proof of Lemma \ref{lem:Lieb}]
  First, we put $I_{z}:=(z,z+1)$ for $z\in\mathbb{Z}$.
  Then we see that
  \begin{align}
    \norm{u}{H^{\sigma/2}}^{2} \geq \sum_{z\in\mathbb{Z}}\norm{u}{\tilde{H}^{\sigma/2}(I_{z})}^{2} \label{eq:A_16}
  \end{align}
  holds for any $u\in H^{\sigma/2}(\bR)$.
  In addition, we set
  \begin{align}
    M:=\sup_{n\in\bN}\norm{u_{n}}{H^{\sigma/2}}, \quad m:= \inf_{n\in\bN}\norm{u_{n}}{L^{r}(\bR)}, \quad C_{0}:=\frac{M^{2}+1}{m^{r}}.
  \end{align}
  Then we claim the following statement \eqref{eq:A_17}:
  \begin{align}
    \text{For all} \ n\in\bN, \ \text{there exists} \ z_{n}\in\mathbb{Z} \ \text{which satisfies} \ \norm{u_{n}}{\tilde{H}^{\sigma/2}(I_{z})}^{2} < C_{0}\norm{u_{n}}{L^{r}(I_{z})}^{r}. \label{eq:A_17}
  \end{align}
  Indeed, if \eqref{eq:A_17} does not hold, there exists $n_{0}\in\bN$ which satisfies $\norm{u_{n_{0}}}{\tilde{H}^{\sigma/2}(I_{z})}^{2} \geq C_{0}\norm{u_{n_{0}}}{L^{r}(I_{z})}^{r}$ for any $z\in\mathbb{Z}$.
  Then using \eqref{eq:A_16} yields
  \begin{align}
    M^{2} \geq \norm{u_{n_{0}}}{H^{\sigma/2}}^{2} &\geq \sum_{z\in\mathbb{Z}}\norm{u_{n_{0}}}{\tilde{H}^{\sigma/2}(I_{z})}^{2} \\
    &\geq C_{0} \sum_{z\in\mathbb{Z}}\norm{u_{n_{0}}}{L^{r}(I_{z})}^{r} = C_{0}\norm{u_{n_{0}}}{L^{r}(\bR)}^{r} \geq C_{0}m^{r} = M^{2} + 1,
  \end{align}
  which is impossible to occur.
  Then from \eqref{eq:A_17}, we can choose a sequence $(z_{n})_{n}\subset\bR$.
  Here we set $v_{n}:=u_{n}(\cdot + z_{n})$ and then $(v_{n})_{n}$ is also bounded in $H^{\sigma/2}(\bR)$.
  Therefore, by taking a subsequence, we obtain $v\in H^{\sigma/2}(\bR)$ which satisfies $v_{n} \rightharpoonup v$ weakly in $H^{\sigma/2}(\bR)$.
  To complete the proof, we shall see that $v\neq 0$.
  By the Sobolev embedding and \eqref{eq:A_17}, we have
  \begin{align}
    \norm{v_{n}}{L^{r}(I_{0})}^{2} \leq C \norm{v_{n}}{\tilde{H}^{\sigma/2}(I_{0})}^{2} < C \norm{v_{n}}{L^{r}(I_{0})}^{r},
  \end{align}
  which means $\norm{v_{n}}{L^{r}(I_{0})}^{r-2} > C$.
  Furthermore, we obtain $v_{n} \rightarrow v$ in $L^{r}(I_{0})$ by the Rellich compact embedding.
  Thus, we have $\norm{v}{L^{r}(I_{0})}>0$, which means $v\neq 0$.
\end{proof}

\section*{Acknowledgements}
The author would like to thank Professor Masahito Ohta, Dr.\ Noriyoshi Fukaya, and Dr.\ Yoshinori Nishii for their support and helpful discussions for this study.

\bibliography{reference_list}

\providecommand{\MR}[1]{}
\providecommand{\bysame}{\leavevmode\hbox to3em{\hrulefill}\thinspace}
\providecommand{\MR}{\relax\ifhmode\unskip\space\fi MR }
\providecommand{\MRhref}[2]{%
  \href{http://www.ams.org/mathscinet-getitem?mr=#1}{#2}
}
\providecommand{\href}[2]{#2}
\begin{thebibliography}{10}

\bibitem{Angulo}
J.~Angulo~Pava, \emph{Nonlinear dispersive equations: existence and stability of solitary and periodic travelling wave solutions}, Mathematical Survays and Monographs, no. 156, Amer. Math. Soc., 2009.

\bibitem{Benjamin}
T.~B. Benjamin, \emph{Internal waves of permanent form in fluids of great depth}, J. Fluid Mech. \textbf{29} (1967), no.~3, 559--592.

\bibitem{BGK}
H.~Berestycki, T.~Gallou\"{e}t, and O.~Kavian, \emph{\'{E}quations de champs scalaires euclidiens non lin\'{e}aires dans le plan}, C. R. Acad. Sci. Paris S\'{e}r. I Math. \textbf{297} (1983), no.~5, 307--310. \MR{734575}

\bibitem{Berestycki-Lions-1}
H.~Berestycki and P.-L. Lions, \emph{Nonlinear scalar field equations. {I}. {E}xistence of a ground state}, Arch. Rational Mech. Anal. \textbf{82} (1983), no.~4, 313--345. \MR{695535}

\bibitem{Brezis-Lieb}
H.~Brezis and E.~Lieb, \emph{A relation between pointwise convergence of functions and convergence of functionals}, Proc. Amer. Math. Soc. \textbf{88} (1983), no.~3, 486--490. \MR{699419}

\bibitem{Chang-Wang}
X.~Chang and Z.-Q. Wang, \emph{Ground state of scalar field equations involving a fractional {L}aplacian with general nonlinearity}, Nonlinearity \textbf{26} (2013), no.~2, 479--494. \MR{3007900}

\bibitem{HitchhikersOfFractionalSobolev}
E.~Di~Nezza, G.~Palatucci, and E.~Valdinoci, \emph{Hitchhiker's guide to the fractional {S}obolev spaces}, Bull. Sci. Math. \textbf{136} (2012), no.~5, 521--573. \MR{2944369}

\bibitem{Frank-Lenzmann}
R.~L. Frank and E.~Lenzmann, \emph{Uniqueness of non-linear ground states for fractional {L}aplacians in {$\mathbb{R}$}}, Acta Math. \textbf{210} (2013), no.~2, 261--318. \MR{3070568}

\bibitem{Fukaya-Hayashi-2011}
N.~Fukaya and M.~Hayashi, \emph{Instability of algebraic standing waves for nonlinear {S}chr\"{o}dinger equations with double power nonlinearities}, Trans. Amer. Math. Soc. \textbf{374} (2021), no.~2, 1421--1447. \MR{4196398}

\bibitem{Lewin-Nodari}
M.~Lewin and S.-R. Nodari, \emph{The double-power nonlinear {S}chr\"{o}dinger equation and its generalizations: uniqueness, non-degeneracy and applications}, Calc. Var. Partial Differential Equations \textbf{59} (2020), no.~6, Paper No. 197, 49. \MR{4171857}

\bibitem{Lieb}
E.~H. Lieb, \emph{On the lowest eigenvalue of the {L}aplacian for the intersection of two domains}, Invent. Math. \textbf{74} (1983), no.~3, 441--448.

\bibitem{Ohta_1995_S}
M.~Ohta, \emph{Stability and instability of standing waves for one-dimensional nonlinear {S}chr\"{o}dinger equations with double power nonlinearity}, Kodai Math. J. \textbf{18} (1995), no.~1, 68--74. \MR{1317007}

\bibitem{Strauss}
W.~A. Strauss, \emph{Existence of solitary waves in higher dimensions}, Comm. Math. Phys. \textbf{55} (1977), no.~2, 149--162. \MR{454365}

\bibitem{Weinstein}
M.~I. Weinstein, \emph{Existence and dynamic stability of solitary wave solutions of equations arising in long wave propagation}, Comm. Partial Differential Equations \textbf{12} (1987), no.~10, 1133--1173. \MR{886343}

\end{thebibliography}
\bibliographystyle{amsplain}

\end{document}